\documentclass[11pt,reqno]{amsart}
\usepackage[T1]{fontenc}
\usepackage{lmodern}
\usepackage[a4paper,margin=29mm]{geometry}
\usepackage{amsmath,amssymb,amsthm,mathtools}
\usepackage{microtype}
\usepackage{enumitem}
\usepackage{xcolor}
\usepackage[colorlinks=false,allcolors=black,linktoc=all,pdfborder={0 0 1}]{hyperref}
\hypersetup{pdftitle={Brake orbit multiplicity on real prequantization bundles},pdfauthor={Peng Liu and Dong Xie},pdfsubject={Brake orbit bounds and curvature pinching on Finsler spheres}}
\newcommand{\doilink}[2]{%
  \begingroup
  \hypersetup{pdfborder={0 0 0}}%
  \href{#1}{#2}%
  \endgroup}
\numberwithin{equation}{section}
\newtheorem{theorem}{Theorem}[section]
\newtheorem{proposition}[theorem]{Proposition}
\newtheorem{lemma}[theorem]{Lemma}
\newtheorem{corollary}[theorem]{Corollary}
\newtheorem{conjecture}{Conjecture}

\theoremstyle{definition}

\newtheorem{remark}[theorem]{Remark}
\newtheorem{example}[theorem]{Example}
\newcommand{\F}{{\mathbb F_2}}
\newcommand{\Z}{\mathbb Z}
\newcommand{\R}{\mathbb R}
\newcommand{\RFH}{\operatorname{RFH}}
\newcommand{\QH}{\operatorname{QH}}
\newcommand{\RS}{\mathrm{RS}}
\newcommand{\Id}{\mathrm{Id}}
\newcommand{\Crit}{\operatorname{Crit}}
\newcommand{\ind}{\operatorname{ind}}
\newcommand{\rank}{\operatorname{rank}}
\newcommand{\Span}{\operatorname{span}}
\DeclareMathOperator{\Fix}{Fix}
\newcommand{\cC}{\mathcal C}
\setlist[enumerate]{leftmargin=*,itemsep=4pt}
\title[Brake orbits on real prequantization bundles]{Multiplicity of brake Reeb orbits on real prequantization bundles}
\author[Peng Liu]{Peng Liu}
\address[Peng Liu]{School of Mathematical Sciences, Nankai University,
Tianjin 300071, China}
\email{\href{mailto:liupeng@nankai.edu.cn}{liupeng@nankai.edu.cn}}
\author[Dong Xie]{Dong Xie}
\address[Dong Xie]{School of Mathematical Sciences, Nankai University,
Tianjin 300071, China; Centre de recherches math\'ematiques (CRM),
Universit\'e de Montr\'eal, Montr\'eal, QC H3C 3J7, Canada}
\email{\href{mailto:dong.xie@nankai.edu.cn}{dong.xie@nankai.edu.cn}}
\thanks{Corresponding author: Dong Xie. The authors contributed equally to this work.}
\date{}
\subjclass[2020]{Primary 37J46; Secondary 53D10, 53D40, 58E10}
\keywords{Brake orbits, real prequantization bundles,
Lagrangian Rabinowitz Floer homology, Maslov index, Finsler geodesics}

\begin{document}
\begin{abstract}
We prove multiplicity results for geometrically distinct brake Reeb
orbits on real prequantization $S^1$-bundles under geometric and
dynamical assumptions.
The lower bounds are expressed in terms of the sum of the mod-two
Betti numbers of the real locus of the base.
Our proofs combine Lagrangian Rabinowitz Floer homology with
the index iteration theory for brake orbits.
As an application, we obtain a sharp lower bound for the number
of brake geodesics on Finsler spheres with pinched flag curvature.
\end{abstract}
\maketitle

\setcounter{tocdepth}{2}
\begingroup
\hypersetup{pdfborder={0 0 0}}
\tableofcontents
\endgroup
\clearpage

\section{Introduction}

\subsection{Background and conjectures}
\label{subsec:conjectures}

We study the number of geometrically distinct brake Reeb orbits
on real prequantization bundles. We conjecture that this number
is at least the sum of the mod-two Betti numbers of the real
locus of the base.
We prove multiplicity results under additional geometric and
dynamical assumptions, with applications to brake geodesics
on Finsler spheres.

Let $Y$ be a contact manifold with contact form $\alpha$.
The \emph{Reeb vector field} $R_\alpha$ is defined by
\[
 \alpha(R_\alpha)=1,\qquad d\alpha(R_\alpha,\cdot)=0.
\]
We denote its flow by $\phi_\alpha^t$.
A \emph{Reeb orbit} is an integral curve of $R_\alpha$.
A \emph{closed Reeb orbit} of period $T>0$ is a smooth map
$\gamma:\R/T\Z\to Y$ satisfying
$\dot\gamma(t)=R_\alpha(\gamma(t))$.

Let $\rho:Y\to Y$ be a smooth involution
{($\rho^2=\Id_Y$)} satisfying $\rho^*\alpha=-\alpha$.
The triple $(Y,\alpha,\rho)$ is called a \emph{real contact manifold}.
A closed Reeb orbit is called a \emph{$\rho$-brake Reeb orbit}
if it admits a parametrization $\gamma:\R/T\Z\to Y$ satisfying
$\gamma(-t)=\rho(\gamma(t))$ for all $t\in\R/T\Z$.
Such orbits are called symmetric periodic Reeb orbits
in \cite{KKK}.
The orbit $\gamma$ is \emph{prime} if $T$ is its least positive
period.
Two closed Reeb orbits are \emph{geometrically distinct}
if their images in $Y$ are different.

A \emph{Reeb chord} of a Legendrian submanifold $K\subset Y$
is a path $x:[0,T]\to Y$, with $T>0$, satisfying
$\dot x(t)=R_\alpha(x(t))$ and $x(0),x(T)\in K$.
In a real contact manifold $(Y,\alpha,\rho)$, the fixed point
set $\Fix(\rho)$, when nonempty, is Legendrian.
Every Reeb chord $x:[0,T]\to Y$ of $\Fix(\rho)$ extends
by reflection to a $\rho$-brake Reeb orbit of period $2T$.
Conversely, a $\rho$-brake Reeb orbit $\gamma$ of period $T$,
{parametrized so that $\gamma(-t)=\rho(\gamma(t))$}, restricts to a Reeb chord on $[0,T/2]$.
Different Reeb chords may extend to brake orbits with the
same image.

Let $(B^{2n},\omega)$ be a closed connected symplectic
manifold, with $n\ge1$, and fix an integral lift
$[\omega]_{\mathbb Z}\in H^2(B;\mathbb Z)$ of $[\omega]$.
A \emph{prequantization $S^1$-bundle}
$\pi:(Y^{2n+1},\xi)\to(B^{2n},\omega)$ is a principal
$S^1$-bundle with Euler class $-[\omega]_{\mathbb Z}$,
equipped with a cooriented contact structure $\xi$
that admits a connection contact form $\alpha_0$
satisfying $\ker\alpha_0=\xi$ and
$d\alpha_0=\pi^*\omega$; see \cite{BKK,GGM}.
The form $\alpha_0$ induces the given coorientation
and is normalized so that the Reeb orbits along the
fibers have minimal period $1$.
For a fixed $S^1$-action and contact structure $\xi$,
this normalized connection form is unique.

A prequantization $S^1$-bundle
$\pi:(Y,\xi)\to(B,\omega)$ is called a
\emph{real prequantization $S^1$-bundle} if it is equipped
with a smooth involution $\rho:Y\to Y$ satisfying
$\rho^*\alpha_0=-\alpha_0$, where $\alpha_0$ is its
normalized connection contact form.
The involution $\rho$ reverses the Reeb flow of $\alpha_0$
and therefore maps fibers to fibers.
It induces an anti-symplectic involution $\tau:B\to B$
satisfying $\pi\circ\rho=\tau\circ\pi$.
The fixed point set $L=\Fix(\tau)$ is called the
\emph{real locus}; when nonempty, it is a closed
Lagrangian submanifold of $B$.

For results on the multiplicity of closed Reeb orbits on
prequantization bundles, see \cite{GGM,AM26a,AM26b}.
In the symplectically aspherical setting, Albers and
Frauenfelder~\cite{AFReebChords} obtained Reeb chord multiplicity
results for Bohr--Sommerfeld Legendrians in certain prequantization
bundles and for a class of $S^1$-invariant contact forms.
For prequantization bundles over closed monotone symplectic
manifolds with minimal Chern number at least $2$,
Shelukhin~\cite{ShelukhinChord} proved that every closed Legendrian
submanifold admits a Reeb chord for every supporting contact form.

All homology groups are taken with coefficients in $\F$.
For a closed manifold $M$, we write
$b_j(M)=\dim_\F H_j(M;\F)$ for its $j$th Betti number and
$b(M)=\sum_{j=0}^{\dim M}b_j(M)$ for its total Betti number.

For the connection contact form $\alpha_0$, the prime
$\rho$-brake Reeb orbits are precisely the circle fibers
over $L$.
We ask whether the topology of $L$ gives a lower bound
for the number of geometrically distinct $\rho$-brake
Reeb orbits of any supporting contact form satisfying
$\rho^*\alpha=-\alpha$.

In Lagrangian intersection theory, the Arnold--Givental
conjecture, recently proved in full generality by Bai,
Shelukhin, Wang, and Xu~\cite{BSWXAG}, asserts that a real
Lagrangian $L$ in a closed symplectic manifold has at least
$b(L)$ intersection points with any Hamiltonian image
intersecting it transversely.
By analogy, we propose the following conjecture for
brake Reeb orbits.

\begin{conjecture}\label{conj:real-prequantization-bound}
Let $\pi:(Y^{2n+1},\xi)\to(B^{2n},\omega)$ be a real
prequantization $S^1$-bundle over a closed connected
symplectic manifold, with $n\ge1$.
Let $\rho$ be its involution and $\tau$ the induced
anti-symplectic involution of $B$.
Assume that $\Fix(\rho)\ne\varnothing$, and put
$L=\Fix(\tau)$.
Let $\alpha$ be a contact form supporting $\xi$ and
satisfying $\rho^*\alpha=-\alpha$.
Then the Reeb flow of $\alpha$ has at least $b(L)$
geometrically distinct $\rho$-brake Reeb orbits.
\end{conjecture}

The contact forms considered in
Conjecture~\ref{conj:real-prequantization-bound} are precisely
the forms $\alpha=f\alpha_0$, where $f:Y\to(0,\infty)$ is
smooth and satisfies $f\circ\rho=f$.

For the Hopf fibration $S^{2n+1}\to\mathbb{CP}^n$,
complex conjugation $\rho(z)=\bar z$ descends to an
anti-symplectic involution of $\mathbb{CP}^n$ with real
locus $\mathbb{RP}^n$.
Let $\Sigma\subset\mathbb C^{n+1}$ be a smooth closed
$\rho$-invariant hypersurface that is \emph{strictly star-shaped}
with respect to the origin, meaning that every ray from
the origin meets $\Sigma$ exactly once and transversely.
In complex coordinates $z_j=x_j+iy_j$, the standard Liouville form
\[
 \lambda_{\mathrm{st}}
 =\frac12\sum_{j=0}^n(x_j\,dy_j-y_j\,dx_j)
\]
restricts to a contact form on $\Sigma$.
Radial projection is a contactomorphism to
the standard contact sphere that commutes with $\rho$.
Since $b(\mathbb{RP}^n)=n+1$,
Conjecture~\ref{conj:real-prequantization-bound} would imply
that $\Sigma$ carries at least $n+1$ geometrically distinct
$\rho$-brake Reeb orbits.
This is the contact version of Seifert's conjecture
for strictly star-shaped hypersurfaces;
see \cite{KKK}.

Giamb\`o, Giannoni, and Piccione~\cite{GGPSeifert} proved
Seifert's conjecture for natural Hamiltonian systems when
the potential well is homeomorphic to a ball and the energy
is a regular value of the potential.
Their result does not cover arbitrary strictly star-shaped
hypersurfaces.
For strictly star-shaped hypersurfaces invariant under complex
conjugation, Rabinowitz~\cite{RabinowitzBrake} proved the existence of a
brake orbit, and Szulkin~\cite{SzulkinBrake} obtained the lower
bound $n+1$ under a geometric pinching condition.
Under the additional assumptions of central symmetry and strict
convexity, Long, Zhang, and Zhu~\cite{LZZBrake} obtained at least two
brake orbits, and C.~Liu and Zhang~\cite{LZSeifert} subsequently
proved the lower bound $n+1$ without assuming nondegeneracy.
Z.~Liu and Zhang~\cite{LZDynamic} obtained the
same bound for centrally symmetric strictly star-shaped
hypersurfaces invariant under complex conjugation, assuming
dynamical convexity and nondegeneracy of all closed characteristics.
For a survey of brake orbit multiplicity results,
see \cite{LLZSurvey}.

A further special case of
Conjecture~\ref{conj:real-prequantization-bound} concerns
Finsler spheres.
We use the definition of a Finsler metric in
\cite[Section~1.1]{BCS}.
For a Finsler manifold $(M,F)$, let $F^*$ denote the dual
Finsler norm. We write
\[
 \begin{aligned}
 S_FM&=\{(q,v)\in TM:F(q,v)=1\},\\
 S_F^*M&=\{(q,p)\in T^*M:F^*(q,p)=1\}
 \end{aligned}
\]
for its unit tangent and unit cosphere bundles, respectively.

We denote the flag curvature of $F$ by $K_F$.
The metric $F$ is called \emph{reversible} if
$F(q,-v)=F(q,v)$ for all $(q,v)\in TM$,
and \emph{non-reversible} otherwise.
When $M$ is compact, we define the \emph{reversibility}
of $F$ by
\[
 \lambda=\max_{(q,v)\in S_FM}F(q,-v).
\]
Then $\lambda=1$ if and only if $F$ is reversible.

Let $\sigma:S^{n+1}\to S^{n+1}$ be a smooth involution
with $\Fix(\sigma)\ne\varnothing$.
Following \cite[Section~2.2]{KimSphere}, we call a Finsler metric
$F$ on $S^{n+1}$ a \emph{real Finsler metric} with respect to
$\sigma$ if
\begin{equation}\label{eq:sphere-conjecture-symmetry}
 F\bigl(\sigma(q),-(D\sigma)_q(v)\bigr)=F(q,v)
 \qquad\bigl(q\in S^{n+1},\ v\in T_qS^{n+1}\bigr).
\end{equation}

A unit-speed closed $F$-geodesic
$c:\R/T\Z\to S^{n+1}$ is called a \emph{$\sigma$-brake geodesic}
if, after a time translation, $c(-t)=\sigma(c(t))$
for all $t\in\R/T\Z$.
Such geodesics are called symmetric closed geodesics in
\cite[Section~2.2]{KimSphere}.
The geodesic $c$ is \emph{prime} if $T$ is its least positive period.
Two prime unit-speed closed $F$-geodesics $c_1$ and $c_2$
are called \emph{distinct} if there is no $s\in\R$ such that
$c_1(t)=c_2(t+s)$ for all $t\in\R$.
For a reversible metric $F$, two prime closed $F$-geodesics
are called \emph{geometrically distinct} if their images
in $S^{n+1}$ are different.

\begin{conjecture}\label{conj:sphere-brake-bound}
Let $F$ be a Finsler metric on $S^{n+1}$, $n\ge1$, and let
$\sigma:S^{n+1}\to S^{n+1}$ be the restriction of a nontrivial
orthogonal linear involution of $\R^{n+2}$.
Assume that $\Fix(\sigma)\ne\varnothing$ and that $F$ is
real with respect to $\sigma$.
In this case, $\Fix(\sigma)\cong S^d$ for some $0\le d\le n$.
Set $a=\min\{d,n-d\}$.
Then $F$ has at least $2a+2$ distinct prime
$\sigma$-brake geodesics.
\end{conjecture}

To see that Conjecture~\ref{conj:sphere-brake-bound} follows from
Conjecture~\ref{conj:real-prequantization-bound}, consider
the unit cosphere bundle
\begin{equation}\label{eq:sphere-finsler-contact}
 Y_F=S_F^*S^{n+1},\qquad
 \alpha_F=\frac{1}{2\pi}\theta|_{Y_F},\qquad
 \rho(q,p)=\bigl(\sigma(q),-\bigl((D\sigma)_q^{-1}\bigr)^*(p)\bigr),
\end{equation}
where $\theta$ is the canonical one-form on $T^*S^{n+1}$.
By \eqref{eq:sphere-conjecture-symmetry}, $\rho$ preserves $Y_F$.
Since $\rho^*\theta=-\theta$, the triple
$(Y_F,\alpha_F,\rho)$ is a real contact manifold.

{The Legendre transform and a constant rescaling of time
identify prime unit-speed $\sigma$-brake geodesics modulo
time translations with geometrically distinct $\rho$-brake
Reeb orbits of $\alpha_F$ in $Y_F$;
see \cite[Section~2.2]{KimSphere}.}

Let $g_0$ denote the round metric of curvature one on $S^{n+1}$.
The unit cosphere bundle $S_{g_0}^*S^{n+1}$, with normalized
connection contact form $\theta|_{S_{g_0}^*S^{n+1}}/(2\pi)$, is a
prequantization $S^1$-bundle over the complex quadric $Q^n$.
Since $\sigma$ preserves $g_0$, this is a real prequantization
$S^1$-bundle with involution $\rho$.
The induced anti-symplectic involution of $Q^n$ has real locus
\begin{equation}\label{eq:sphere-conjecture-real-locus}
 L\cong(S^d\times S^{n-d})/\bigl((x,y)\sim(-x,-y)\bigr),
 \qquad b(L)=2a+2.
\end{equation}
{Under the radial identification
$Y_F\cong S_{g_0}^*S^{n+1}$, which commutes with $\rho$,
we have
\[
 \alpha_F=f\,\frac{\theta|_{S_{g_0}^*S^{n+1}}}{2\pi},
\]
where $f>0$ is smooth and satisfies $f\circ\rho=f$.}
Hence Conjecture~\ref{conj:real-prequantization-bound} implies
Conjecture~\ref{conj:sphere-brake-bound}.
Here $\Fix(\rho)$ is the unit conormal bundle
of $\Fix(\sigma)$.
These identifications and the calculation of $b(L)$ are
explained in Section~\ref{subsec:sphere-contact-model}.

On cosphere bundles, Bro\'ci\'c, Cant, and
Shelukhin~\cite{BCSConormal} proved the existence of Reeb chords
on unit conormal bundles of closed submanifolds of positive
codimension in closed manifolds, for every supporting contact form.
Applied to $\Fix(\rho)$, this result yields a
$\rho$-brake Reeb orbit by reflection.
Bro\'ci\'c and Cant~\cite{BCReebChords} proved that if a closed
manifold admits a submersion from a product of spheres,
then every closed Legendrian submanifold of its cosphere
bundle admits a Reeb chord for every supporting contact form.

Corona~\cite{CoronaDisk} proved that a smooth
compact reversible Finsler manifold with boundary,
homeomorphic to an $N$-dimensional disk, $N\ge2$, has at least
$N$ geometrically distinct orthogonal geodesic chords,
provided that no geodesic chord is orthogonal to the boundary
at one endpoint and tangent to it at the other.

In our setting, suppose that $F$ is reversible and
$\Fix(\sigma)=S^n$.
By \eqref{eq:sphere-conjecture-symmetry}, $\sigma$ is an isometry
of $F$, so its fixed-point set $S^n$ is totally geodesic.
By uniqueness of geodesics, a geodesic segment whose interior
lies in an open hemisphere cannot meet the equator tangentially.
We can therefore apply this result to a closed hemisphere,
which is an $(n+1)$-dimensional disk, to obtain at least
$n+1$ geometrically distinct orthogonal geodesic chords.
Reflecting these chords across the equator yields at least
$n+1$ geometrically distinct prime $\sigma$-brake geodesics
on $S^{n+1}$.

For a Riemannian two-sphere with an orientation-reversing
isometric involution whose fixed set is a circle,
Kang~\cite{KangReversible} proved that there
are infinitely many symmetric closed geodesics if the
geodesic flow admits an invariant Birkhoff annulus as a
global surface of section.
S.~Kim~\cite{KimSphere} considered real Finsler
two-spheres with respect to the involutions specified there.
He proved that there are either exactly two or infinitely
many symmetric closed geodesics if
$(\lambda/(\lambda+1))^2<K_F\le1$.
Theorem~\ref{cor:sphere-pinching} below treats spheres of
dimension at least three.

\subsection{Main results}
\label{subsec:main-results}

We work in the setting of
Conjecture~\ref{conj:real-prequantization-bound}, with
normalized connection contact form $\alpha_0$.
Put $\Lambda=\Fix(\rho)$ and write the contact form as
$\alpha=f\alpha_0$.

Let $\delta=\phi_{\alpha_0}^{1/2}$ be the time-$1/2$
map of the Reeb flow of $\alpha_0$.
The restriction $\pi|_\Lambda:\Lambda\to L$ is a two-fold
covering, with $\Lambda$ possibly disconnected.
The involution $\delta$ exchanges the two points in each fiber.
If $f\circ\delta=f$, then $\delta^*\alpha=\alpha$, so the
Reeb flow of $\alpha$ descends to the quotient
$Y/\langle\delta\rangle$.

A Lagrangian $L\subset B$ is \emph{positively monotone} if there is
$\lambda_L>0$ such that
\begin{equation}\label{eq:monotone}
 \omega(A)=\lambda_L\mu_L(A)\qquad(A\in\pi_2(B,L)),
\end{equation}
where $\mu_L:\pi_2(B,L)\to\Z$ is the Maslov homomorphism.
The \emph{minimal Maslov number} $N_L$ of $L$ is the positive generator
of the image of $\mu_L$, with $N_L=\infty$ when that image is zero.

For a positively monotone $L$ with $2\le N_L<\infty$,
we write $\QH_*(L;\F)$ for the Lagrangian quantum homology
with coefficients in $\F[t,t^{-1}]$, where $|t|=-N_L$.
Following \cite{BC}, we say that $L$ is \emph{wide} over $\F$
if there is an isomorphism of graded $\F[t,t^{-1}]$-modules
\begin{equation}\label{eq:wide-module}
 \QH_*(L;\F)\cong H_*(L;\F)\otimes_{\F}\F[t,t^{-1}].
\end{equation}
We use the homological grading of \cite{BKK}.

A Reeb chord of $\Lambda$ is \emph{relatively contractible}
if it is homotopic to a constant path through paths with
endpoints on $\Lambda$.
We write $\cC^0_+(\alpha,\Lambda)$ for the set of relatively
contractible Reeb chords of $\Lambda$.
A chord $x:[0,T]\to Y$ is
\emph{nondegenerate} if
\begin{equation}\label{eq:real-chord-nondegeneracy}
 (D\phi_\alpha^T)_{x(0)}\bigl(T_{x(0)}\Lambda\bigr)\cap T_{x(T)}\Lambda=\{0\}
 \quad\text{in }\xi_{x(T)}.
\end{equation}
If a Reeb chord of $\Lambda$ is relatively contractible,
then the closed orbit obtained by reflection is contractible; see
Section~\ref{subsec:geometric-conventions}.

For a closed Reeb orbit $\gamma:\R/T\Z\to Y$, the
\emph{linearized Poincar\'e return map} is the symplectic
linear map
\[
 \left.(D\phi_\alpha^T)\right|_{\xi_{\gamma(0)}}:
 \xi_{\gamma(0)}\longrightarrow\xi_{\gamma(0)}.
\]
The orbit $\gamma$ is called \emph{nondegenerate} if this
map has no eigenvalue $1$.
A prime closed Reeb orbit is called \emph{strongly nondegenerate}
if all its iterates are nondegenerate.

When $L$ is positively monotone, we associate two Maslov
indices $\mu_I(x)$ and $\mu_{-I}(x)$ to each
relatively contractible Reeb chord $x$ whose associated
$\rho$-brake Reeb orbit
is nondegenerate.
These indices are defined from the linearized Reeb flow,
using the $+1$ and $-1$ eigenspaces of $(D\rho)|_\xi$ at the
endpoints of $x$ as Lagrangian boundary conditions, respectively.
The $+1$ eigenspaces are the tangent spaces of $\Lambda$.
We follow the notation of \cite[Section~4.1.3]{KKK}.
{Section~\ref{subsec:geometric-conventions} specifies the
capping conventions, and Lemma~\ref{lem:real-two-boundary-indices}
proves that both indices are independent of the capping
and adapted trivialization.}

{The first theorem gives the lower bound $b(L)$ when $N_L>n$.
For $N_L=n$, Theorem~\ref{thm:main-brake-critical} gives $b(L)-1$
under weaker index bounds and $b(L)$ under an additional mean
index assumption.}

\begin{theorem}\label{thm:main-brake-total-betti}
Let $\pi:(Y^{2n+1},\xi)\to(B^{2n},\omega)$ be a real
prequantization $S^1$-bundle over a closed connected symplectic
manifold, with $n\ge2$.
Let $\alpha_0$ be its normalized connection contact form,
$\rho$ its involution, and $\tau$ the induced anti-symplectic
involution of $B$.
Put $L=\Fix(\tau)$, $\Lambda=\Fix(\rho)$, and
$\delta=\phi_{\alpha_0}^{1/2}$.

Assume that $\Lambda\ne\varnothing$ and that $L$ is connected,
positively monotone, and wide over $\F$, with finite minimal
Maslov number $N_L>n$.
Let $\alpha=f\alpha_0$, where $f:Y\to(0,\infty)$ is smooth and
satisfies $f\circ\rho=f$ and $f\circ\delta=f$.
Suppose that every prime $\rho$-brake Reeb orbit of $\alpha$
carrying a relatively contractible Reeb chord of
$\Lambda$ is strongly nondegenerate.
Assume that every $x\in\cC^0_+(\alpha,\Lambda)$ satisfies
\begin{equation}\label{eq:main-brake-total-index}
 \mu_I(x),\ \mu_{-I}(x)\ge\frac n2+1
 \quad\text{or}\quad
 \mu_I(x),\ \mu_{-I}(x)\le-\frac n2-1.
\end{equation}
Then the Reeb flow of $\alpha$ has at least $b(L)$
geometrically distinct $\rho$-brake Reeb orbits.
\end{theorem}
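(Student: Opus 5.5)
The proof combines a computation of Lagrangian Rabinowitz Floer homology with a counting argument from index iteration. I would carry it out in the following order.

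\textbf{Step 1: the homology.} First compute the Lagrangian Rabinowitz Floer homology $\RFH_*(\Lambda)$ of the Legendrian $\Lambda=\Fix(\rho)$, restricted to the relatively contractible sector, in the symplectization filling of the prequantization bundle (the associated disk bundle over $B$). Since $\alpha=f\alpha_0$ is isotopic to $\alpha_0$ through $\rho$-invariant forms, and $\RFH$ is invariant under such deformations, it suffices to compute it for $\alpha_0$. For $\alpha_0$ the chords form Morse--Bott families. For each $k\ge1$ there is a copy of $\Lambda$ (chords of length $k/2$), and these are relatively contractible exactly when the relevant class is a multiple of the minimal Maslov class. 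A Morse--Bott spectral sequence, combined with the wideness of $L$ and the hypothesis $N_L>n$, should show that the homology is periodic. In each window of degrees of length $N_L$ it should carry the full $H_*(L;\F)$, shifted by $N_L$ per period. This is the Lagrangian analogue of the computation in \cite{BKK}. The assumption $f\circ\delta=f$ lets one pass to $Y/\langle\delta\rangle$, where $\Lambda$ descends to a copy of $L$ and the chord families are indexed cleanly. In the end, in a suitable degree window of width about $n$ there are at least $b(L)$ nonzero generators, one for each basis element of $H_*(L;\F)$.

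\textbf{Step 2: chords in a degree window.} Since every relevant brake orbit of $\alpha$ is strongly nondegenerate, the chords in $\cC^0_+(\alpha,\Lambda)$ are nondegenerate. The Floer complex is then generated by these chords, and chord $x$ sits in degree $\mu_I(x)$ up to a fixed normalization shift involving $n/2$. By \eqref{eq:main-brake-total-index}, no chord has degree strictly between the two thresholds. Using the periodicity from Step 1 together with $N_L>n$, I would choose a window of $b(L)$ consecutive nonzero degrees that avoids this gap. Each homology class in the window then needs a distinct chord $x_j$ of the matching index. This produces $b(L)$ chords whose indices $\mu_I$ are pairwise separated by the Betti pattern.

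\textbf{Step 3: from chords to geometrically distinct orbits.} This is the main obstacle. Distinct chords may be iterates of the same prime brake orbit, and a single orbit carries two chords: one from each of its two symmetric points, that is, endpoints on $\Lambda$ and on its $\delta$-translate. For this I would use the brake index iteration theory of Long--Zhang--Zhu and Liu--Zhang. For a prime brake orbit $y$, the iterates' indices $\mu_I(y^m)$ and $\mu_{-I}(y^m)$ satisfy the relation that their sum is essentially the Conley--Zehnder index of $y^{2m}$, and they grow with the mean index $\hat\mu(y)$. The condition that both $\mu_I$ and $\mu_{-I}$ lie on the same side of the gap forces $|\hat\mu(y)|>n$ for each relevant $y$; this would be established through a common index jump argument. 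Consequently each orbit contributes at most one iterate to any window of width $\le n$, with the $\mu_{-I}$-chord accounted for by the $\delta$-symmetry.

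A counting argument then gives injectivity of the map from the $b(L)$ chords of Step 2 to prime orbits. One could also try a resonance/Ekeland-type argument, following \cite{LZSeifert}, instead. The delicate parts are matching the precise index shift conventions and making sure the gap thresholds $\pm(n/2+1)$ align with windows of width $N_L>n$.
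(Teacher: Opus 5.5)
\textbf{The main gap is in Step 3.} You have the right ingredients: the quotient by $\delta$, $\RFH_*(\bar\Lambda)\cong\QH_*(L)$ from \cite{BKK} together with wideness, and Morse inequalities $\dim\RFH_j\le M_j$. However, Step 3, which carries the argument, rests on two false claims.

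First, \eqref{eq:main-brake-total-index} does not force $|\widehat\mu|>n$. Combined with \eqref{eq:real-double-index} and \eqref{eq:mean-cz-estimate}, it only gives $|2\widehat\mu(x)-\mu_I(x)-\mu_{-I}(x)|<n$, hence $|\widehat\mu(x)|>1$. For an irrational ellipsoid as in Corollary~\ref{cor:star-shaped-brake} with one complex axis much shorter than the others, the half-chord of the shortest orbit has mean index arbitrarily close to $1$.

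Second, no mean index bound yields ``at most one iterate in any window of width $n$''. By \eqref{eq:brake-signed-index-growth}, the degrees $q(\ell)=\varepsilon\mu_I(x^\ell)+\frac n2$ are only guaranteed to increase by $\varepsilon\mu_I(x)-\frac n2\ge1$ per step. In the ellipsoid example they increase by exactly $1$ over a long range of $\ell$, so a generic window $[K,K+n]$ contains $n+1$ iterates of a single family.

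The missing idea is that the window must be \emph{chosen}, simultaneously for all chord families and compatibly with the periodicity of RFH. Theorem~\ref{thm:brake-common-jump-standard}, with divisor $D=N_L$ and valid for mean indices of either sign after conjugating by $I$, gives arbitrarily large $K\in N_L\mathbb N$ and integers $m_p$ with
\[
 q_p(2m_p-1)=K-\varepsilon_p\mu_{-I}(x_p)+\tfrac n2\le K-1,\qquad
 q_p(2m_p+1)=K+\varepsilon_p\mu_I(x_p)+\tfrac n2\ge K+n+1.
\]
Since each $q_p$ is nondecreasing, each family then has at most one generator in $[K,K+n]$. Meanwhile $N_L\mid K$ and $N_L>n$ give $\sum_{j=0}^n\dim\QH_{K+j}(L;\F)=b(L)$; note this is a window of $n+1$ degrees, not ``$b(L)$ consecutive nonzero degrees''. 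This is where $\mu_{-I}$ genuinely enters: it controls the iterate just before the jump, while $\mu_I$ controls the one just after. It is also why the hypothesis uses $\pm(\frac n2+1)$ rather than $\pm\frac n2$. Your Step 2, which picks the window only from periodicity and the gap $[0,n]$, cannot be combined with Step 3 as written.

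\textbf{The orbit bookkeeping is also off.} Since $\delta$ commutes with $\rho$, we have $\delta(\Lambda)=\Lambda$, so there is no separate ``$\delta$-translate''. Also, $\mu_{-I}(x)$ is a second index of the same chord, with boundary conditions in the $-1$ eigenspace of $D\rho$; it is not a chord from the other symmetric point. An injection from window generators to prime orbits of $\bar Y$ does not exist in general. A prime orbit $\bar\gamma$ meeting $\bar\Lambda$ twice carries two chord families, and both may contribute to the window.

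The correct count uses Lemma~\ref{lem:brake-admissible-iterates} and Lemma~\ref{lem:real-half-period-quotient}. The number $s_{\bar\gamma}\in\{1,2\}$ of chord families on $\bar\gamma$, one per initial point, equals the number of geometrically distinct $\rho$-brake orbits of $Y$ over $\bar\gamma$. Hence $b(L)\le\sum_{j=0}^nM_{K+j}\le s=\sum_{\bar\gamma}s_{\bar\gamma}$ bounds brake orbits directly.

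Finally, under the negative alternative in \eqref{eq:main-brake-total-index}, the generators in high degree are the reversed chords $(x_p^m)^{-1}$, of degree $n-\mu(x_p^m)$ by \eqref{eq:reversal}. Your Step 2 omits these by considering only positive chords.
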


Let $x$ be a relatively contractible Reeb chord
of $\Lambda$, and let $x^m$ denote its $m$th \emph{iterate},
obtained by reflection and concatenation.
Suppose that all iterates of the associated $\rho$-brake Reeb
orbit $x^2$ are nondegenerate.
Following \cite{KKK}, we define the \emph{mean index} of $x$ by
\begin{equation}\label{eq:brake-mean-definition}
 \widehat\mu(x)=\lim_{m\to\infty}\frac{\mu_I(x^m)}{m}.
\end{equation}
This is half the mean Conley--Zehnder index of $x^2$.
{For the iteration conventions, see
Section~\ref{subsec:index-iteration-tools}.}

The case $n=2$ of the next theorem requires the following
filling condition.
\begin{enumerate}[label=\textup{(F)},ref=F]
\item\label{ass:fillings}
The quotient $\bar Y=Y/\langle\delta\rangle$ admits a Liouville
filling $W$ containing an exact Lagrangian filling $C$ of
$\bar\Lambda=\Lambda/\langle\delta\rangle$.
The inclusion-induced map
\[
 \pi_1(\bar Y,\bar\Lambda)\longrightarrow\pi_1(W,C)
\]
is injective, and $\mu_C|_{\pi_2(W,C)}=0$.
Here $\mu_C$ is the Maslov class of $C$, and the relative homotopy
sets are based at a point of $\bar\Lambda$.
\end{enumerate}

\begin{theorem}\label{thm:main-brake-critical}
Let $\pi:(Y^{2n+1},\xi)\to(B^{2n},\omega)$ be a real
prequantization $S^1$-bundle over a closed connected symplectic
manifold, with $n\ge2$.
Let $\alpha_0$ be its normalized connection contact form,
$\rho$ its involution, and $\tau$ the induced anti-symplectic
involution of $B$.
Put $L=\Fix(\tau)$, $\Lambda=\Fix(\rho)$, and
$\delta=\phi_{\alpha_0}^{1/2}$.

Assume that $\Lambda\ne\varnothing$ and that $L$ is connected,
positively monotone, and wide over $\F$, with minimal Maslov
number $N_L=n$.
Let $\alpha=f\alpha_0$, where $f:Y\to(0,\infty)$ is smooth and
satisfies $f\circ\rho=f$ and $f\circ\delta=f$.
If $n=2$, assume in addition that
condition~\eqref{ass:fillings} holds.
Suppose that every prime $\rho$-brake Reeb orbit of $\alpha$
carrying a relatively contractible Reeb chord of
$\Lambda$ is strongly nondegenerate.
Assume that every $x\in\cC^0_+(\alpha,\Lambda)$ satisfies
\begin{equation}\label{eq:main-brake-critical-index}
 \mu_I(x),\ \mu_{-I}(x)\ge\frac n2
 \quad\text{or}\quad
 \mu_I(x),\ \mu_{-I}(x)\le-\frac n2.
\end{equation}
Then the Reeb flow of $\alpha$ has at least $b(L)-1$
geometrically distinct $\rho$-brake Reeb orbits.
If, in addition,
\begin{equation}\label{eq:main-brake-critical-mean}
 |\widehat\mu(x)|>\frac{n\bigl(b(L)-1\bigr)}{b(L)}
 \qquad\bigl(x\in\cC^0_+(\alpha,\Lambda)\bigr),
\end{equation}
then the Reeb flow of $\alpha$ has at least $b(L)$
geometrically distinct $\rho$-brake Reeb orbits.
\end{theorem}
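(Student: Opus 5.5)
The plan is to compare two descriptions of the Lagrangian Rabinowitz Floer homology of the pair $(Y,\Lambda)$, or rather of its quotient $(\bar Y,\bar\Lambda)$ under $\delta$.

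\textbf{Step 1 (topological side).} Compute $\RFH_*(\Lambda)$ in the relatively contractible class from the model form $\alpha_0$. The chords of $\Lambda$ for $\alpha_0$ come in Morse--Bott families indexed by half-integer periods $k/2$. Each family is diffeomorphic to $\Lambda$, or to $\bar\Lambda\cong L$ after quotienting by $\delta$. Their index shifts are governed by $N_L$ through the Maslov class of $\pi_2(B,L)$. Because $L$ is wide, the Morse--Bott spectral sequence degenerates, as in the Lagrangian analogue of the Gysin-type computation in \cite{BKK}. The result is
\[
\RFH_*\cong\bigoplus_{k\in\Z}H_{*-kN_L+c}(L;\F)
\]
for a fixed shift $c$. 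When $N_L>n$ the degree windows $[kN_L-c,kN_L-c+n]$ are pairwise disjoint. When $N_L=n$, consecutive windows overlap in exactly one degree, which is where the $-1$ will come from. For $n=2$, the invariance of $\RFH$ and the vanishing of $\RFH$ of the filling under condition~\eqref{ass:fillings} (via $\mu_C=0$ and injectivity on $\pi_1$) should supply the missing control of the grading and the action.

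\textbf{Step 2 (dynamical side).} Since $\RFH$ is invariant under the deformation $f\alpha_0$ with $f\circ\rho=f$ and $f\circ\delta=f$, the same groups are generated by the nondegenerate chords of $\alpha$. The index of a chord $x$ in degree conventions is $\mu_I(x)$ up to the shift; in the $\delta$-quotient picture both $\mu_I$ and $\mu_{-I}$ enter, since the $-1$ eigenspace corresponds to the chord from $\Lambda$ to $\delta\Lambda$. Assume for contradiction that there are fewer than $b(L)$ geometrically distinct brake orbits $\gamma_1,\dots,\gamma_r$. Every relatively contractible chord is then an iterate $x_j^m$ of one of the finitely many prime chords $x_j$.

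\textbf{Step 3 (index iteration).} Use the Bott-type iteration inequalities for brake orbits from \cite{KKK}, together with hypothesis \eqref{eq:main-brake-total-index}. These should show that each $x_j$ has nonzero mean index, of definite sign. They should also show that its iterates' indices increase by at least $n+2$ per two iterations, roughly $\mu_I(x^{m+2})\ge\mu_I(x^m)+n+2$ by combining the $I$ and $-I$ bounds. Hence each prime chord contributes at most one generator to each window of length $N_L$. Counting the generators needed to produce rank $b(L)$ in one window gives $r\ge b(L)$.

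For $N_L=n$ the same count loses one generator in the overlapping degree, giving $b(L)-1$. Under \eqref{eq:main-brake-critical-mean}, an averaged count over $K$ windows gives asymptotically $\sum_j 2K n/|\widehat\mu(x_j)|$ generators. This must be at least $K(b(L)-1)+O(1)$, which is a common-index or density argument. It forces $r\cdot n/|\widehat\mu|\ge b(L)-1$, and with the strict inequality this excludes $r=b(L)-1$.

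\textbf{Main obstacle.} I expect the hardest step to be Step 3: establishing the precise iteration inequality linking $\mu_I$, $\mu_{-I}$ and the grading of the $\delta$-quotient. I would first try the splitting numbers of Long--Zhang--Zhu for brake orbits, applied to $x^2$ and $x^{2m}$.
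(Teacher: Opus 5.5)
\textbf{Gap 1: your Step~3 fails, and this is where the substance of the proof lies.}

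\emph{Wrong hypothesis and a false inequality.} You invoke \eqref{eq:main-brake-total-index}, which is the hypothesis of Theorem~\ref{thm:main-brake-total-betti}. Here only \eqref{eq:main-brake-critical-index} is available. Under it, \eqref{eq:brake-signed-index-growth} gives merely $q(\ell+1)\ge q(\ell)$ for $q(\ell)=\varepsilon\mu_I(x^\ell)+n/2$. The bound $\mu_I(x^{m+2})\ge\mu_I(x^m)+n+2$ is false for general $m$, even under the stronger hypothesis. The average growth over two iterates is $2|\widehat\mu(x)|$, which is close to $2$ for the short-axis chord of an irrational ellipsoid whose other axes are long. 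The jump $q(2m+1)-q(2m-1)=\varepsilon(\mu_I(x)+\mu_{-I}(x))$ holds only at the special iterates produced by a common index jump theorem.

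\emph{What replaces it.} It is therefore not true that each family contributes at most one generator to every window of length $N_L$. You need Theorem~\ref{thm:brake-common-jump-standard}, applied simultaneously to all shortest relatively contractible chords $x_p$. It uses conjugation by $I$ for negative mean indices, and it forces $K\in2N_L\mathbb N$ so that the window is aligned with the $N_L$-periodicity of $\QH_*(L)$. This yields a single window $[K+1,K+n-1]$ in which each family appears at most once, via its $2m_p$th iterate. Counting there gives only $\sum_{j=1}^{n-1}b_j(L)=b(L)-2$.

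\emph{Why the overlap degree costs two, not one.} In degrees $K$ and $K+n$, $\RFH$ has rank two. There the inequalities $q_p(2m_p-1)\le K$ and $q_p(2m_p+1)\ge K+n$ may be equalities, and consecutive iterates may share a degree. So a single family can contribute several generators in those degrees, and your claim that the count ``loses one generator in the overlapping degree'' is unjustified.

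\emph{The missing parity argument.} Suppose $s=b(L)-2$. Then all inequalities become equalities. In the cancelled finite complex $\widetilde C_*$ with $\dim\widetilde C_j=M_j$, the boundary ranks $r_n$ and $r_{K+n}$ vanish. In degrees $n,\dots,K+n-1$ the generators are the iterates $1,\dots,2m_p$ of each family plus one constant generator. Counting them gives \eqref{eq:brake-prefix-parity}, whose left side is odd and whose right side is even because $K\in2N_L\mathbb N$.

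\textbf{Gap 2: the mean-index step does not reach the conclusion.} Over any $N_L=n$ consecutive degrees, $\RFH$ has total rank $b(L)$. So the density to compare with is $b(L)/n$, not $(b(L)-1)/n$, and your factor $2$ is spurious. Your inequality $rn/|\widehat\mu|\ge b(L)-1$, combined with \eqref{eq:main-brake-critical-mean}, only gives $r>(b(L)-1)^2/b(L)$. That does not exclude $r=b(L)-1$. The correct bound is $b(L)/N_L\le\sum_p1/|\widehat\mu(x_p)|$ (Corollary~\ref{lem:brake-mean-density}), which gives $s>b(L)-1$.

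\textbf{Counting must be done in the quotient.} Count in $\bar Y$, using the shortest relatively contractible chords $x_p$ of Lemma~\ref{lem:brake-admissible-iterates}, one per initial point on $\bar\Lambda$. These are not necessarily halves of prime orbits. Lemma~\ref{lem:real-half-period-quotient} identifies their number $s=\sum_{\bar\gamma}s_{\bar\gamma}$ with a number of geometrically distinct brake orbits in $Y$. In $Y$ itself each brake orbit carries two chord families. Moreover, the identification $\RFH\cong\QH_*(L)$ of \cite{BKK} is used for a lift projecting diffeomorphically onto $L$, such as $\bar\Lambda$.

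\textbf{Two smaller corrections.}
\begin{enumerate}
\item $\mu_{-I}$ is defined with the boundary condition $E^-$. It is not the index of a chord from $\Lambda$ to $\delta\Lambda$, and it enters only through $q_p(2m_p-1)=K-\varepsilon_p\mu_{-I}(x_p)+n/2$.
\item For $n=2$, condition~\eqref{ass:fillings} is used to identify, by neck-stretching, the $\RFH$ defined through $(W,C)$ with $\QH_*(L)$. It is not used through any vanishing of $\RFH$.
\end{enumerate}
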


We first consider the Hopf fibration $S^{2n+1}\to\mathbb{CP}^n$,
with $n\ge2$. The real locus $L=\mathbb{RP}^n$ is connected, positively
monotone, and wide over $\F$, with $N_L=b(L)=n+1$;
see \cite[Proposition~1.6]{BKK}.
For the normalized Hopf connection contact form
$\alpha_{\mathrm H}=\lambda_{\mathrm{st}}|_{S^{2n+1}}/\pi$,
the map $\delta=\phi_{\alpha_{\mathrm H}}^{1/2}$ is the antipodal map.
For a domain $D$ as in the following corollary, write its radial
parametrization as $\iota_D(u)=r(u)u$, where $r(u)>0$. Then
\[
 \iota_D^*\alpha_D=\pi r^2\alpha_{\mathrm H},\qquad
 r(\bar u)=r(u)=r(-u).
\]
This parametrization identifies $S^n$ with $\Lambda_D$ and
conjugates the Reeb flow of $\pi r^2\alpha_{\mathrm H}$ to
that of $\alpha_D$, preserving nondegeneracy and the Maslov
indices computed using relative cappings.
Since $S^{2n+1}$ is simply connected and $S^n$ is connected,
every chord of $\Lambda_D$ is relatively contractible.
Theorem~\ref{thm:main-brake-total-betti} therefore gives the
following corollary.

\begin{corollary}\label{cor:star-shaped-brake}
Let $n\ge2$ and let $D\subset\mathbb C^{n+1}$ be a smooth bounded
domain containing the origin and strictly star-shaped with respect
to it. Assume that $D$ is invariant under complex conjugation
$\rho(z)=\bar z$ and that $\partial D$ is centrally symmetric.
Put $\alpha_D=\lambda_{\mathrm{st}}|_{\partial D}$ and
$\Lambda_D=\partial D\cap\mathbb R^{n+1}$.
Assume that every prime $\rho$-brake Reeb orbit of $\alpha_D$ is
strongly nondegenerate and that every Reeb chord
$x$ of $\Lambda_D$ satisfies
\begin{equation}\label{eq:star-shaped-brake-index}
 \mu_I(x),\ \mu_{-I}(x)\ge\frac{n+2}{2}.
\end{equation}
Then the Reeb flow of $\alpha_D$ has at least $n+1$
geometrically distinct $\rho$-brake Reeb orbits.
\end{corollary}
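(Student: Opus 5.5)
The corollary is a direct application of Theorem~\ref{thm:main-brake-total-betti} to the Hopf fibration $\pi:S^{2n+1}\to\mathbb{CP}^n$ with $\rho(z)=\bar z$, after transporting all data along the radial parametrization $\iota_D$. The proof consists of checking the hypotheses one at a time.

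\emph{Step 1: transfer to the prequantization model.} Pull back by $\iota_D$ to write
\[
\iota_D^*\alpha_D=f\alpha_{\mathrm H},\qquad f=\pi r^2 .
\]
The function $f$ is smooth and positive because $\partial D$ is strictly star-shaped. Invariance of $D$ under conjugation gives $r(\bar u)=r(u)$, hence $f\circ\rho=f$. Central symmetry of $\partial D$ gives $r(-u)=r(u)$. Since $\delta=\phi^{1/2}_{\alpha_{\mathrm H}}$ is the antipodal map, this yields $f\circ\delta=f$. The map $\iota_D$ commutes with $\rho$ and identifies $\Lambda=\Fix(\rho)=S^n$ with $\Lambda_D$. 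It therefore sends $\rho$-brake Reeb orbits bijectively to $\rho$-brake Reeb orbits and Reeb chords of $\Lambda$ to Reeb chords of $\Lambda_D$. Geometrically distinct orbits correspond to geometrically distinct orbits. As noted before the statement, nondegeneracy and the indices $\mu_{\pm I}$ are preserved. The index $\mu_{-I}$ is also preserved, because $\iota_D$ intertwines $D\rho|_\xi$ and hence its $-1$ eigenspaces.

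\emph{Step 2: the topological hypotheses.} We have $\Lambda=S^n\neq\varnothing$ and $L=\mathbb{RP}^n$ is connected. By \cite[Proposition~1.6]{BKK}, $L$ is positively monotone and wide over $\F$, with $N_L=n+1>n$, and $b(L)=n+1$. Because $S^{2n+1}$ is simply connected and $S^n$ is connected (using $n\ge2$), every Reeb chord of $\Lambda$ is relatively contractible. Concretely, a path with endpoints on $S^n$ can be closed up inside $S^n$ and then contracted in $S^{2n+1}$; this is exactly the exactness of $\pi_1(S^n)\to\pi_1(S^{2n+1})\to\pi_1(S^{2n+1},S^n)\to\pi_0(S^n)$. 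Hence $\cC^0_+(\alpha,\Lambda)$ consists of all chords.

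\emph{Step 3: the dynamical hypotheses.} Strong nondegeneracy of every prime $\rho$-brake orbit of $\alpha_D$ transfers to the model by Step 1. This covers in particular those carrying relatively contractible chords. Hypothesis \eqref{eq:star-shaped-brake-index} reads $\mu_I(x),\mu_{-I}(x)\ge (n+2)/2=n/2+1$, which is the first alternative of \eqref{eq:main-brake-total-index}. Theorem~\ref{thm:main-brake-total-betti} then gives at least $b(\mathbb{RP}^n)=n+1$ geometrically distinct $\rho$-brake orbits of $f\alpha_{\mathrm H}$, and hence of $\alpha_D$.

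The only step needing care is Step 1, specifically the claim that the index conventions of Section~\ref{subsec:geometric-conventions} are invariant under the strict contactomorphism $\iota_D$. First, $\iota_D$ is $\rho$-equivariant. Second, the relative cappings defining $\mu_{\pm I}$ map to relative cappings. By Lemma~\ref{lem:real-two-boundary-indices} the indices do not depend on the capping or the adapted trivialization. So pushing forward a capping together with its trivialization computes the same Maslov indices, and I expect no genuine obstruction here.
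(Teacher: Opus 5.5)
Your proposal is correct and follows the paper's argument. The paper likewise pulls back by the radial parametrization to get $\iota_D^*\alpha_D=\pi r^2\alpha_{\mathrm H}$, with $r$ invariant under conjugation and under the antipodal map $\delta=\phi^{1/2}_{\alpha_{\mathrm H}}$. It then invokes \cite[Proposition~1.6]{BKK} for $L=\mathbb{RP}^n$, uses simple connectivity of $S^{2n+1}$ and connectedness of $S^n$ to make every chord relatively contractible, and applies Theorem~\ref{thm:main-brake-total-betti} via the first alternative of \eqref{eq:main-brake-total-index}. The only nitpick is that connectedness of $S^n$ needs just $n\ge1$, not $n\ge2$.
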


In this setting, condition~\eqref{eq:star-shaped-brake-index}
is the real dynamical convexity condition
of \cite[Definition~4.4]{KKK}.
The lower bound is attained by irrational ellipsoids, whose prime
closed Reeb orbits are strongly nondegenerate;
see \cite[p.~1199]{LZIteration}.
Since these ellipsoids are strictly convex and invariant
under complex conjugation, \cite[Theorem~4.5]{KKK} implies
that their Reeb chords satisfy
\eqref{eq:star-shaped-brake-index}.

\begin{remark}\label{rem:star-shaped-comparison}
J.~Kim, S.~Kim, and Kwon~\cite[Theorem~1.2]{KKK} state the lower
bound $n+1$ under real dynamical convexity and nondegeneracy
of all closed Reeb orbits, without assuming central symmetry.
The calculation in \cite[Section~3.8]{KKK} treats nondegenerate
Reeb chords as pairs of distinct chords under {the involution defined in \cite[(3.7)]{KKK}}.
In the anti-symplectic case, however, nondegeneracy does not
exclude fixed chords; in particular, even iterates of Reeb
chords are fixed by this involution.
{The degree relation in \cite[(3.21)]{KKK}} is not valid in the anti-symplectic case.
Consequently, under the hypotheses of their Theorem~1.2,
the stated lower bound does not follow from the argument
in the proof of \cite[Theorem~4.12]{KKK}.

Under central symmetry, the antipodal map gives the free
action $x\mapsto-x$ on Reeb chords.
For this action, Corollary~3.35 and Theorem~3.40
of~\cite{KKK} apply.
Suppose that there are only finitely many geometrically distinct
prime brake orbits, and denote their number by $s$.
Their $2s$ half-period chords form $s$ antipodal pairs.
Applying the index argument in the proof of
\cite[Theorem~4.12]{KKK} to representatives of these pairs
gives $s\ge n+1$ under \eqref{eq:star-shaped-brake-index}.
The argument requires nondegeneracy only for brake orbits
and their iterates.
\end{remark}

We next consider Finsler spheres.
A closed $F$-geodesic is \emph{nondegenerate} if its corresponding
closed Reeb orbit of $\alpha_F$ is nondegenerate.
A prime closed $F$-geodesic is \emph{strongly nondegenerate}
if all its iterates are nondegenerate.

\begin{theorem}\label{cor:sphere-pinching}
Let $F$ be a Finsler metric on $S^{n+1}$, $n\ge2$, with
reversibility $\lambda$, and let
$\sigma:S^{n+1}\to S^{n+1}$ be the restriction of a nontrivial
orthogonal linear involution of $\R^{n+2}$.
Assume that $\Fix(\sigma)\ne\varnothing$ and that $F$ is
real with respect to $\sigma$.
In this case, $\Fix(\sigma)\cong S^d$ for some $0\le d\le n$.
Set $a=\min\{d,n-d\}$.
If $a\ge1$, assume in addition that
\begin{equation}\label{eq:sphere-antipodal-symmetry}
 F(-q,-v)=F(q,v).
\end{equation}
Assume that every prime $\sigma$-brake geodesic of $F$ is
strongly nondegenerate.
If the flag curvature satisfies
\begin{equation}\label{eq:sphere-standard-pinching}
 \left(\frac{\lambda}{\lambda+1}\right)^2<K_F\le1,
\end{equation}
then $F$ has at least $2a+1$ distinct prime $\sigma$-brake geodesics.
If, in addition,
\begin{equation}\label{eq:sphere-pinching}
 \frac{(2a+1)^2}{(a+1)^2}
 \left(\frac{\lambda}{\lambda+1}\right)^2
 <K_F\le1,
\end{equation}
then $F$ has at least $2a+2$ distinct prime $\sigma$-brake geodesics.
\end{theorem}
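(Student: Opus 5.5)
The strategy is to deduce Theorem~\ref{cor:sphere-pinching} from Theorem~\ref{thm:main-brake-critical}, applied to the real contact manifold $(Y_F,\alpha_F,\rho)$ of \eqref{eq:sphere-finsler-contact}. Under the radial identification, $Y_F$ is the real prequantization bundle $S^*_{g_0}S^{n+1}\to Q^n$, and $\alpha_F=f\alpha_0$ with $f\circ\rho=f$.

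\textbf{Step 1: check the hypotheses on the base.} The real locus $L$ in \eqref{eq:sphere-conjecture-real-locus} is connected. Since $\Fix(\sigma)=S^d$, we have $\Lambda\neq\varnothing$, as $\Lambda$ is the unit conormal bundle. I would show that $L$ is positively monotone and wide over $\F$ with $N_L=n$. Wideness should follow because these real quadric loci carry perfect $\F$-Morse functions compatible with the pearl complex. The bound $N_L\ge n$ comes from the Maslov index computation in the real quadric, where the minimal Chern number of $Q^n$ is $n$. This gives $b(L)=2a+2$.

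\textbf{Step 2: $\delta$-invariance.} On $S^*_{g_0}S^{n+1}$ the half-period geodesic flow is $(q,p)\mapsto(-q,-p)$. Hence $f\circ\delta=f$ is exactly the antipodal symmetry \eqref{eq:sphere-antipodal-symmetry} of $F$. For $a=0$ this hypothesis is not assumed, so I would treat that case separately: either $b(L)=2$ and one only needs chord existence plus a parity argument, or I would check that the argument there does not need $\delta$.

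\textbf{Step 3: the filling condition (F) when $n=2$.} The quotient $Y/\langle\delta\rangle$ is the unit cosphere bundle of $\mathbb{RP}^{n+1}$. It is filled by $D^*\mathbb{RP}^{n+1}$, and $C$ is the conormal of $\Fix(\sigma)/\pm$. I would check $\pi_1$-injectivity and the vanishing of the Maslov class for conormals in cotangent bundles.

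\textbf{Step 4: index bounds.} This is the main obstacle. Under the pinching condition \eqref{eq:sphere-standard-pinching}, I would use a Rademacher-type result: every closed geodesic has length at least $\pi(1+\lambda^{-1})$, and Morse--Schubert comparison via Jacobi fields in the Finsler setting bounds the Morse index of a geodesic of length $\ell$ below by roughly $n\lfloor \ell\sqrt{K_{\min}}/\pi\rfloor$. Translating to the Lagrangian boundary conditions $\pm I$ (the Morse index of brake chords with conormal and orthogonal boundary conditions), each half-chord should satisfy $\mu_{\pm I}(x)\ge n/2$, which is \eqref{eq:main-brake-critical-index}. I would first try the Morse index theorem for geodesics with endpoint conditions on $\Fix(\sigma)$, comparing with the index of the round sphere. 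Relative contractibility of chords must also be handled, using the simple connectivity of $Y$ for $n\ge2$ and the connectedness of $\Lambda$ or its components.

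\textbf{Step 5: mean index and conclusion.} For the mean index, the pinching $K_F\ge\kappa$ gives $\widehat\mu(x)\ge n\,\ell(x)\sqrt{\kappa}/(2\pi)$. Combined with $\ell\ge\pi(1+\lambda^{-1})$, this gives $\widehat\mu>n\cdot\frac{2a+1}{2a+2}$ under \eqref{eq:sphere-pinching}, since $\frac{2a+1}{a+1}\cdot\frac{\lambda}{\lambda+1}\cdot\frac{\lambda+1}{\lambda}\cdot\frac12=\frac{2a+1}{2a+2}$. This is \eqref{eq:main-brake-critical-mean} with $b(L)=2a+2$. Theorem~\ref{thm:main-brake-critical} then yields $2a+1$, respectively $2a+2$, geometrically distinct brake Reeb orbits. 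These correspond to distinct prime $\sigma$-brake geodesics via the Legendre transform.
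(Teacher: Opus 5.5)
Your outline for $a\ge1$ follows the paper's route: quotient by the antipodal map $\delta$, the $D^*\mathbb{RP}^3$ filling when $n=2$, Rademacher's length bound, and the mean-index threshold giving \eqref{eq:main-brake-critical-mean}. The case $a=0$, however, is genuinely missing.

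Your Step~1 asserts $N_L=n$. For $a=0$ one has $L\cong S^n$ and $N_L=2n$, so Theorem~\ref{thm:main-brake-critical} does not apply. Without \eqref{eq:sphere-antipodal-symmetry} there is also no quotient. Since the two pinching conditions coincide when $a=0$, the statement asserts two brake geodesics under \eqref{eq:sphere-standard-pinching} alone. Chord existence gives one, and the parity mechanism gives at most $b(L)-1=1$.

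Even with $\delta$ available, the quotient would not suffice. A single brake orbit yields one family in $\bar Y$ whose shortest chord lifts to a half-orbit from $\Lambda_+$ to $\Lambda_-$. That chord has only $\widehat\mu>n/2$, which is compatible with the density bound $1/n=b(L)/N_L\le1/\widehat\mu$.

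The missing idea is to work on $Y_F$ with a single component $\Lambda_+\cong S^n$ of $\Fix(\rho)$. It is itself a Legendrian lift projecting diffeomorphically onto $L$, so $\RFH_j(\Lambda_+;\F)\cong\F$ exactly for $j\in n\Z$. Each of $\Lambda_\pm$ carries a positive chord, since otherwise its RFH would vanish in large degrees. Suppose there were only one brake orbit $\gamma$. By \eqref{eq:two-brake-markings} it contains exactly two points of $\Fix(\rho)$ and must meet both components. So the shortest chord of $\Lambda_+$ traverses $\gamma$ once and projects to a closed geodesic. Rademacher gives length at least $\pi(1+\lambda^{-1})$, hence $\widehat\mu>n$. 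Lemma~\ref{lem:rfh-mean-density} then yields $1/n\le1/\widehat\mu<1/n$, a contradiction.

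Step~4 also has a real gap, which you flag but do not close. The Dirichlet-type bound $n\lfloor\ell\sqrt\kappa/\pi\rfloor$ is useless for a chord of $\Fix(\rho)$. Rademacher only gives $\ell(x)\ge\frac\pi2(1+\lambda^{-1})$, i.e.\ $\ell(x)\sqrt\kappa>\pi/2$, and the floor may vanish. What is needed are the exact identities
\[
 \mu_I(x)=i_+(x)-d+\frac n2,\qquad
 \mu_{-I}(x)=i_-(x)-(n-d)+\frac n2,
\]
where $i_\pm$ are the Morse indices of the index form on normal fields whose endpoint values are tangent, respectively normal, to $S^d$.

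The first identity comes from the Abbondandolo--Portaluri--Schwarz index theorem for conormal boundary conditions, after checking that the radial/Reeb plane contributes $\frac12$. The second comes from splitting the loop-space index of the reflected closed geodesic into the $\pm1$ eigenspaces of the reflection, and using $\mu_{\mathrm{CZ}}(x^2)=\mu_I(x)+\mu_{-I}(x)$.

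One then uses the quarter-wave fields $\cos(\pi t/2\ell)E(t)$, with $E$ parallel and $E(0)$ tangent or normal to $S^d$. They vanish only at $t=\ell$ and give $i_+\ge d$ and $i_-\ge n-d$ precisely because $\ell\sqrt\kappa>\pi/2$. This requires $S^d$ to be totally geodesic for $F$ and the chord to be $g_{\dot c}$-orthogonal to it. Both follow from $\sigma$ being an isometry from $F$ to its reverse. You identify the right boundary conditions, but without these offsets and test fields the bound $\mu_{\pm I}(x)\ge n/2$ is not established.

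Finally, in Step~1 the minimal Chern number only gives $N_L\mid 2n$, not $N_L\ge n$. Wideness also needs an actual argument rather than the existence of perfect Morse functions: Oh's theorem for real forms when $N_L\ge3$, and a pearl computation for $T^2\subset Q^2$.
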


The Katok--Ziller type metrics in
Example~\ref{ex:sphere-katok-sharp} show that the lower bound
$2a+2$ in Theorem~\ref{cor:sphere-pinching} and
Conjecture~\ref{conj:sphere-brake-bound} is sharp for every
possible dimension $0\le d\le n$ of the fixed point set.

\begin{remark}
\leavevmode When $a=0$, \eqref{eq:sphere-standard-pinching}
and \eqref{eq:sphere-pinching} coincide, so
Theorem~\ref{cor:sphere-pinching} gives at least two distinct
prime $\sigma$-brake geodesics without an antipodal
invariance assumption.
For $a\ge1$, \eqref{eq:sphere-pinching} implies
$\lambda<(a+1)/a$.
\end{remark}

\subsection{Outline of the proofs}
\label{subsec:proof-outline}

We first pass to the quotient by $\delta$, where the image
$\bar\Lambda$ of $\Lambda$ projects diffeomorphically onto $L$.
For $N_L\ge3$, the comparison of Bae, Kang, and
S.~Kim~\cite[Theorem~1.1]{BKK} identifies the Lagrangian Rabinowitz
Floer homology of $\bar\Lambda$ with the quantum homology of $L$.
For $N_L=2$, we prove the same isomorphism under
condition~\eqref{ass:fillings} in
Lemma~\ref{lem:relative-filling-comparison}.
When $L$ is wide, we can therefore compute the dimension
of RFH in each degree from the Betti numbers of $L$.

We may assume that there are only finitely many geometrically
distinct $\rho$-brake Reeb orbits.
Every relatively contractible Reeb chord of
$\bar\Lambda$ is an iterate of the shortest such chord
with the same initial point.
Using the index assumptions and
{Theorem~\ref{thm:brake-common-jump-standard},
which allows nonzero mean indices of either sign,}
we choose an interval of degrees in which the
iterates of each such shortest chord, together with their
reversals, contribute at most one generator.
The Morse inequalities in
Proposition~\ref{prop:rfh-morse-inequalities} therefore bound
the number of these shortest chords from below by the sum
of the dimensions of RFH in these degrees.
By Lemma~\ref{lem:real-half-period-quotient}, this gives the
same lower bound for the number of geometrically distinct
$\rho$-brake Reeb orbits in $Y$.

When $N_L>n$, this argument gives the bound $b(L)$ in
Theorem~\ref{thm:main-brake-total-betti}.
When $N_L=n$, it gives the lower bound $b(L)-2$.
{Comparing the two sides of
\eqref{eq:brake-prefix-parity} modulo two rules out equality
and gives the bound $b(L)-1$ in
Theorem~\ref{thm:main-brake-critical}.}
Under the additional mean index assumption,
Corollary~\ref{lem:brake-mean-density} yields the bound $b(L)$.

To prove Theorem~\ref{cor:sphere-pinching}, we combine
Rademacher's length estimate~\cite{Rademacher} with the
curvature bounds to obtain lower bounds for the two Maslov
indices and the mean index of Reeb chords.
For $a\ge1$, these estimates allow us to apply
Theorem~\ref{thm:main-brake-critical}.
For $a=0$, we instead use the RFH of each of the two connected
components of $\Lambda$, without passing to the quotient.

\section{Preliminaries}\label{sec:preliminaries}

\subsection{Reeb chords and cappings}
\label{subsec:geometric-conventions}

We use the notation and conventions of
Sections~\ref{subsec:conjectures} and~\ref{subsec:main-results}:
\[
 \pi:(Y^{2n+1},\xi)\longrightarrow(B^{2n},\omega),\qquad
 \xi=\ker\alpha_0,\qquad
 \Lambda=\Fix(\rho),\qquad L=\Fix(\tau).
\]
We normalize $\alpha_0$ so that its fiber orbits have minimal
period $1$, and denote the time-$1/2$ map of its Reeb flow by
$\delta=\phi_{\alpha_0}^{1/2}$.
The contact form is $\alpha=f\alpha_0$, where $f>0$ and
$f\circ\rho=f$. We impose $f\circ\delta=f$ when passing to
the quotient by $\delta$.

A \emph{Legendrian lift} of a Lagrangian $L\subset B$ is a Legendrian
submanifold $K\subset Y$ for which $\pi|_K:K\to L$ is a
covering map.
The fixed point set $\Lambda$ is a possibly disconnected
Legendrian lift of $L$.

Since $\rho^*\alpha=-\alpha$, the Reeb flow satisfies
$\rho\phi_\alpha^t=\phi_\alpha^{-t}\rho$.
Let $K$ be a submanifold of $\Fix(\rho)$.
For a Reeb chord $x:[0,T]\to Y$ of $K$,
reflection gives the smooth $\rho$-brake Reeb orbit
\begin{equation}\label{eq:brake-reflected-double}
 x^2(t)=
 \begin{cases}
  x(t),&0\le t\le T,\\
  \rho(x(2T-t)),&T\le t\le2T
 \end{cases}
\end{equation}
of period $2T$; see
\cite[Section~1.2]{KKK}.
If $x$ is relatively contractible in $(Y,K)$, it is homotopic,
with fixed endpoints, to a path $\eta$ in $K$.
Reflecting this homotopy gives a homotopy from $x^2$ to
$\eta*\eta^{-1}$, so $x^2$ is contractible.

If a prime closed Reeb orbit $\gamma$ of period $P$ starts at
a point of $\Fix(\rho)$, then
$\rho\gamma(t)=\gamma(-t)$. Since $P$ is the least period,
\begin{equation}\label{eq:two-brake-markings}
 \gamma(t)\in\Fix(\rho)
 \quad\Longleftrightarrow\quad 2t=0\pmod P.
\end{equation}

For $m\ge1$, the $m$th iterate $x^m:[0,mT]\to Y$
is obtained by repeated reflection at the endpoints of $x$;
see \cite[Section~4.1.3]{KKK}.

In the remainder of this subsection, let $\Lambda\subset Y$
be a possibly disconnected Legendrian lift of a positively monotone
Lagrangian $L\subset B$, with monotonicity constant $\lambda_L$
as in \eqref{eq:monotone}.
The connection identifies $\xi$ with $\pi^*TB$ and $T\Lambda$ with
the pullback of $TL$. Hence the relative Maslov index of a disk
$u:(D,\partial D)\to(Y,\Lambda)$ equals
\[
 \mu_L([\pi\circ u])
 =\lambda_L^{-1}\int_Du^*d\alpha_0
 =\lambda_L^{-1}\int_{\partial D}u^*\alpha_0=0,
\]
by monotonicity, Stokes' theorem, and horizontality of $\Lambda$.
Thus the relative Maslov class of $(\xi,T\Lambda)$ vanishes on
$\pi_2(Y,\Lambda)$; see \cite[Lemma~3.6]{BKK}.

For every sphere $v:S^2\to Y$, monotonicity, the identification
$\xi\cong\pi^*TB$, and Stokes' theorem give
\[
 \langle c_1(\xi),[v]\rangle
 =\langle c_1(TB),[\pi\circ v]\rangle
 =\frac{1}{2\lambda_L}\int_{S^2}v^*d\alpha_0
 =0.
\]
Hence $c_1(\xi)|_{\pi_2(Y)}=0$.

For a relatively contractible chord $x:[0,T]\to Y$, a \emph{relative
capping} is a map
\[
 u:\{z\in\mathbb C:|z|\le1,\ \operatorname{Im}z\ge0\}\longrightarrow Y
\]
whose semicircular boundary lies in $\Lambda$ and whose diameter
satisfies $u(1-2t)=x(Tt)$ for $0\le t\le1$. We use the
orientation convention of \cite[Section~3.2]{BKK}.
Choose a symplectic trivialization of $u^*(\xi,d\alpha|_\xi)$,
with fiberwise symplectic isomorphisms
$\Psi_z:\xi_{u(z)}\to\mathbb C^n$.
Since $d\alpha|_\xi=f\,d\alpha_0|_\xi$ with $f>0$, this choice of
symplectic form does not change the relative Maslov class.
Require that $\Psi_z(T_{u(z)}\Lambda)=\mathbb R^n$
along the semicircular boundary.
Since the Reeb flow preserves $d\alpha|_\xi$, its linearization
in this trivialization gives the symplectic path
\[
 \Phi_x(t)=\Psi_{1-2t/T}\circ
 \left.(D\phi_\alpha^t)\right|_{\xi_{x(0)}}\circ\Psi_1^{-1},
 \qquad 0\le t\le T,
\]
with $\Phi_x(0)=\Id$.

\subsection{Indices of Reeb chords}
\label{sec:index}

Throughout this subsection, $L$ is positively monotone, as in
Section~\ref{subsec:geometric-conventions}.
We briefly recall the Robbin--Salamon index; see
\cite[Section~2]{RS} and \cite[Section~4.1.1]{KKK}.
Let $\ell(t)$, $0\le t\le T$, be a smooth path of Lagrangian
subspaces of a symplectic vector space $(V,\Omega)$, and let
$F\subset V$ be a fixed Lagrangian subspace. A time $t_*$ is a
\emph{crossing} if $\ell(t_*)\cap F\ne\{0\}$. For
$v\in\ell(t_*)\cap F$, choose a smooth extension
$v(t)\in\ell(t)$ with $v(t_*)=v$. The \emph{crossing form} is
\[
 Q_{t_*}(v)=\Omega\bigl(v,\dot v(t_*)\bigr).
\]
It is independent of the choice of extension. A crossing is
\emph{regular} if $Q_{t_*}$ is nondegenerate. If all crossings are regular,
then
\[
 \mu_{\RS}(\ell,F)
 =\frac12\operatorname{sign}Q_0
  +\sum_{\substack{0<t<T\\ \ell(t)\cap F\ne\{0\}}}
       \operatorname{sign}Q_t
  +\frac12\operatorname{sign}Q_T.
\]
Here $\operatorname{sign}$ denotes the signature. At an endpoint
that is not a crossing, the corresponding term is zero.
For a general continuous path, the index is defined by homotopy
with fixed endpoints to a path with only regular crossings.

For a relatively contractible chord $x$, let $\Phi_x$ be the path
constructed in Section~\ref{subsec:geometric-conventions}.
Its Robbin--Salamon index is
\[
 \mu_{\RS}(x)=\mu_{\RS}(\Phi_x\mathbb R^n,\mathbb R^n).
\]
This index is independent of the adapted trivialization and,
since the relative Maslov class vanishes, of the capping
\cite[Theorems~2.3 and~3.1, and Remark~2.6]{RS}.
For a nondegenerate chord, $\Phi_x(0)=\Id$ and
$\Phi_x(T)\mathbb R^n\cap\mathbb R^n=\{0\}$ imply
$\mu_{\RS}(x)\in\frac n2+\mathbb Z$; see
\cite[Theorem~2.4]{RS}. Following \cite[Appendix~A]{BKK},
we use the integer \emph{Floer degree}
\begin{equation}\label{eq:degree}
 \mu(x)=\mu_{\RS}(x)+\frac n2.
\end{equation}

Suppose now that $(Y,\alpha,\rho)$ is a real contact manifold
and $\Lambda\subset\Fix(\rho)$.
Along $\Lambda$, the eigenspaces
\[
 E^+=\ker((D\rho)|_\xi-\Id)=T\Lambda,\qquad
 E^-=\ker((D\rho)|_\xi+\Id)
\]
are complementary Lagrangian subbundles of $\xi$. Following
\cite[Section~4.1.3]{KKK}, put
\[
 I=\operatorname{diag}(\Id_n,-\Id_n),\qquad
 L_0=\R^n\times\{0\}=\Fix(I),\qquad
 L_1=\{0\}\times\R^n=\Fix(-I).
\]
Choose the trivialization over the capping to identify $E^+$ and $E^-$
with $L_0$ and $L_1$ along the semicircular boundary.
The \emph{Maslov indices} of $x$ are defined by
\begin{equation}\label{eq:real-boundary-indices}
 \mu_I(x)=\mu_{\RS}(\Phi_xL_0,L_0),\qquad
 \mu_{-I}(x)=\mu_{\RS}(\Phi_xL_1,L_1).
\end{equation}
For $j=0,1$, we say that $\Phi_x$ is $L_j$-nondegenerate
if $\Phi_x(T)L_j\cap L_j=\{0\}$.
We use $\mu_I(x)$ and $\mu_{-I}(x)$ only when $\Phi_x$
is $L_0$- and $L_1$-nondegenerate, respectively.

For a nondegenerate chord $x$ of $\Lambda$, we have
$\mu_I(x)=\mu_{\RS}(x)$ and hence
$\mu(x)=\mu_I(x)+n/2$.
For degenerate chords, we use the Robbin--Salamon index rather than
the lower semicontinuous extensions in \cite[Definition~4.2]{KKK}.

For a symplectic path $\Phi:[0,T]\to\operatorname{Sp}(2n)$
with $\Phi(0)=\Id$ and $\det(\Phi(T)-\Id)\ne0$, our convention
for the Conley--Zehnder index is
\[
 \mu_{\mathrm{CZ}}(\Phi)
 =\mu_{\RS}\bigl(\operatorname{Gr}(\Phi),\Delta\bigr),
 \qquad
 \Delta=\{(v,v):v\in\mathbb R^{2n}\}.
\]
Here $\operatorname{Gr}(\Phi)$ denotes the path
$t\mapsto\{(v,\Phi(t)v):v\in\mathbb R^{2n}\}$.
The graph and diagonal are viewed as Lagrangian subspaces of
$(\mathbb R^{2n}\oplus\mathbb R^{2n},-\omega_0\oplus\omega_0)$,
where $\omega_0=\sum_{j=1}^n dq_j\wedge dp_j$; see
\cite[Remark~5.4]{RS}. For a nondegenerate contractible closed Reeb
orbit, we use a trivialization extending over a capping. The resulting
index is independent of the capping because
$c_1(\xi)|_{\pi_2(Y)}=0$, as shown in
Section~\ref{subsec:geometric-conventions}.

\begin{lemma}\label{lem:real-two-boundary-indices}
In the setting of Conjecture~\ref{conj:real-prequantization-bound},
suppose that $L$ is positively monotone. For every relatively
contractible chord of a Legendrian lift contained in
$\Fix(\rho)$, the indices
$\mu_{\RS}(\Phi_xL_j,L_j)$, $j=0,1$, are independent of the
relative capping and the adapted trivialization.
{For such a chord $x$ with relative capping $u$, a covering
$q:(Y,\alpha)\to(\bar Y,\bar\alpha)$ satisfying
$q^*\bar\alpha=\alpha$ and intertwining the anti-contact
involutions preserves both indices when $q\circ x$ is
capped by $q\circ u$.}
\end{lemma}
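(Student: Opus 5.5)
Independence of the trivialization and of the capping are handled separately, both with the Robbin--Salamon axioms \cite{RS}; the covering statement then follows because everything is pulled back.

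\emph{Trivialization.} Fix a relative capping $u$. Two adapted trivializations $\Psi,\Psi'$ over $u$ differ by a map $g:D_+\to\operatorname{Sp}(2n)$, where $D_+$ is the closed upper half disk, $g_z=\Psi'_z\Psi_z^{-1}$. Along the semicircular boundary $g_z$ preserves both $L_0$ and $L_1$. Indeed, both trivializations send $E^+$ to $L_0$ and $E^-$ to $L_1$. Hence $g_z=\operatorname{diag}(A_z,(A_z^{T})^{-1})$ with $A_z\in GL(n,\R)$ on the semicircle. The new path is $\Phi'_x(t)=g_{1-2t/T}\Phi_x(t)g_1^{-1}$.

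Since $D_+$ is contractible, $g$ is homotopic, rel nothing, to the constant map $g_1$. One wants to pass to the boundary data: restrict attention to the loop $\partial D_+$. The diameter path and the semicircle path of $g$ are homotopic rel endpoints inside $\operatorname{Sp}(2n)$. Thus the diameter path $t\mapsto g_{1-2t/T}$ is homotopic rel endpoints to a path lying in the subgroup $G=\{\operatorname{diag}(A,(A^T)^{-1})\}$. Each element of $G$ preserves $L_j$.

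By homotopy invariance with fixed endpoints, $\mu_{\RS}(\Phi'_xL_j,L_j)=\mu_{\RS}(h(t)\Phi_x(t)L_j,L_j)$ for a path $h$ in $G$ with $h(0)=g_1^{-1}$-compatible endpoints. Here we use that $g_1^{-1}L_j=L_j$. Because $h(t)$ preserves $L_j$, naturality gives $\mu_{\RS}(h\Phi_xL_j,L_j)=\mu_{\RS}(\Phi_xL_j,h^{-1}L_j)=\mu_{\RS}(\Phi_xL_j,L_j)$. The middle term uses the two-argument version, in which the second Lagrangian may move. I expect this reduction, making precise the homotopy of the pair of paths so that the endpoint conditions are preserved, to be the main technical step.

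\emph{Capping.} For two cappings $u,u'$, glue $u$ with the reversed $u'$ along the diameter to obtain a disk $w:(D,\partial D)\to(Y,\Lambda)$. Write $\mu_L$ for its relative Maslov class for the boundary condition $E^+=T\Lambda$. Section~\ref{subsec:geometric-conventions} shows this class vanishes. The boundary condition $E^-$ is obtained from $E^+$ by the complex structure (up to homotopy of Lagrangian subbundles: $E^-$ is transverse to $E^+$, hence homotopic to $JE^+$ through transverse Lagrangians). The Maslov class of $(w^*\xi,E^-)$ therefore also vanishes. A trivialization over $u'$ then extends, adapted along the boundary, to one over $u$ that agrees on the diameter. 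This reduces the capping question to the trivialization case, by the catenation property of \cite[Theorem~2.3]{RS}. Equivalently, the index changes by the Maslov index of $w$, which is $0$.

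\emph{Coverings.} Let $q$ be the covering. Since $q^*\bar\alpha=\alpha$, $q$ intertwines the Reeb flows and is a fiberwise symplectic isomorphism $\xi\to\bar\xi$. Since $q\rho=\bar\rho q$, it carries $E^\pm$ to $\bar E^\pm$. An adapted trivialization $\bar\Psi$ over $q\circ u$ therefore pulls back to $\bar\Psi\circ Dq$, which is adapted over $u$. The linearized paths coincide, $\Phi_x=\Phi_{q\circ x}$. Both indices are then equal, and the independence just proved makes the choice of $\bar\Psi$ irrelevant.
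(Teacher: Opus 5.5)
Your proposal is correct and follows essentially the same route as the paper. Both arguments use the vanishing of the relative Maslov class for $E^+$, and hence for $E^-$, to get capping independence, use the Robbin--Salamon homotopy and naturality properties to get trivialization independence, and observe that $Dq$ yields identical matrix paths for the covering. The differences are minor. You make the trivialization step explicit by homotoping the transition path into the stabilizer $\{\operatorname{diag}(A,(A^T)^{-1})\}$ of $L_0$ and $L_1$. You also obtain the vanishing for $E^-$ from its transversality to $E^+$, whereas the paper chooses $J$ anticommuting with $D\rho$ so that $E^-=JE^+$ holds exactly.
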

\begin{proof}
Choose a compatible complex structure $J$ on $\xi$ with
$D\rho\circ J=-J\circ D\rho$. Such a structure can be obtained
from a $\rho$-invariant metric.
Along the fixed set, $E^-=JE^+$. Thus
$e^{\theta J}E^+$, $0\le\theta\le\pi/2$, is a Lagrangian
homotopy from $E^+$ to $E^-$. By Section~\ref{subsec:geometric-conventions}, the relative
Maslov class of $(\xi,E^+)$ vanishes, and the homotopy gives the
same conclusion for $(\xi,E^-)$.
Since the semicircular boundary is an interval, choose a unitary
trivialization over the capping adapted to both eigenspaces along
this boundary. Changing the capping
changes either Maslov index by the corresponding relative
Maslov index, which is zero. The index is unchanged under a
change of adapted trivialization over a fixed capping, by the
naturality and homotopy properties in
\cite[Theorems~2.3 and~3.1]{RS}.

For the covering assertion, the differential $Dq$ identifies
the symplectic contact
distributions, the linearized Reeb flows, and the two eigenspace
bundles. Lifting or projecting a relative capping and its adapted
frame therefore gives identical matrix paths.
\end{proof}

\subsection{Index iteration}
\label{subsec:index-iteration-tools}

For a symplectic path
$\Phi:[0,T]\to\operatorname{Sp}(2n)$ with $\Phi(0)=\Id$,
let $\Phi^k:[0,kT]\to\operatorname{Sp}(2n)$ denote its
$k$th iterate, defined by reflection and concatenation
as in \cite[(4.5)--(4.6)]{KKK}.
In particular,
\[
 \Phi^2(t)=
 \begin{cases}
  \Phi(t),&0\le t\le T,\\
  I\Phi(2T-t)\Phi(T)^{-1}I\Phi(T),
    &T\le t\le2T.
 \end{cases}
\]
The path $\Phi^{2k}$ is obtained by iterating
$\Phi^2$ $k$ times, using the extension
$\Phi^2(t+2T)=\Phi^2(t)\Phi^2(2T)$; see
\cite[Section~4.1.3]{KKK}.

\begin{lemma}\label{lem:brake-capped-iteration}
In the setting of Conjecture~\ref{conj:real-prequantization-bound},
suppose that $L$ is positively monotone.
Let $K$ be a union of connected components of $\Fix(\rho)$,
and let $x:[0,T]\to Y$ be a
relatively contractible Reeb chord of $K$.
Suppose that every iterate of the associated $\rho$-brake
Reeb orbit $x^2$ is nondegenerate.
A relative capping of $x$, together with an adapted
symplectic trivialization, determines a path
$\Phi:[0,T]\to\operatorname{Sp}(2n)$ whose iterates
satisfy
\begin{equation}\label{eq:brake-capped-iteration}
 \mu_I(x^\ell)=\mu_{\RS}(\Phi^\ell L_0,L_0),\qquad
 \mu_{-I}(x^\ell)=\mu_{\RS}(\Phi^\ell L_1,L_1)
 \quad(\ell\ge1).
\end{equation}
Every $x^\ell$ is relatively contractible, and
\[
 \Phi^\ell(\ell T)L_j\cap L_j=\{0\}
 \qquad(j=0,1;\ \ell\ge1).
\]
\end{lemma}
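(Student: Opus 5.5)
The plan is to build one capping and one adapted frame for $x$, then obtain cappings and frames for every iterate $x^\ell$ by reflecting with $\rho$ and concatenating. With the index in hand for every iterate, the identities then follow from Lemma~\ref{lem:real-two-boundary-indices}.

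\emph{Step 1: the base path.} Fix a relative capping $u$ of $x$ and a unitary trivialization $\Psi$ of $u^*\xi$ that sends $E^+$ to $L_0$ and $E^-$ to $L_1$ along the semicircular boundary. As in the proof of Lemma~\ref{lem:real-two-boundary-indices}, we use a $J$ with $D\rho\, J=-J\,D\rho$. Set $\Phi=\Phi_x$. By construction, $D\rho$ restricted to $\xi$ reads as $I$ in the frame at every boundary point lying in $K$, and in particular at the endpoints $x(0)$ and $x(T)$.

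\emph{Step 2: capping the double.} Let $\bar u=\rho\circ u\circ c$, where $c$ is complex conjugation on the half-disk domain. The semicircle of $\bar u$ lies in $K$, because $K\subset\Fix(\rho)$ and $\rho$ is an involution. Its diameter traces $\rho(x(T-\cdot))$, which by \eqref{eq:brake-reflected-double} is the second half of $x^2$, up to reparametrization. Equip $\bar u$ with the frame $I\circ\Psi\circ D\rho$. This frame is symplectic: $\rho$ is anti-contact and $I$ is anti-symplectic, so the two sign reversals cancel. It is also still adapted, because $I$ preserves $L_0$ and $L_1$.

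Glue $u$ and $\bar u$ along a boundary arc near $x(T)$ to get a relative capping of $x^2$. There is a subtlety here. A relative capping of a chord has only one boundary arc in $K$, whereas the glued domain has two half-disks meeting at one point. I would handle this by inserting a strip that runs inside $K$ along the semicircle, so that the result is a half-disk for the chord $x^2$ viewed as the chord on $[0,2T]$ from $x(0)$ to $x(0)$. This gluing shows at once that every $x^\ell$ is relatively contractible, by induction on $\ell$.

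\emph{Step 3: the frames match, and the key identity.} The frames agree at the gluing point $x(T)$, since there $\Psi\,D\rho\,\Psi^{-1}=I$. Use $\rho\phi^t_\alpha=\phi^{-t}_\alpha\rho$. Then for $T\le t\le 2T$, the linearized flow of $x^2$ at time $t$ equals $D\rho\circ D\phi^{2T-t}\circ D\rho\circ D\phi^{-T}\circ D\phi^{T}$, suitably composed. Writing this in the frames gives exactly
\[I\Phi(2T-t)\Phi(T)^{-1}I\Phi(T),\]
which is $\Phi^2(t)$. Higher iterates follow by the same reflection, together with the extension rule $\Phi^2(t+2T)=\Phi^2(t)\Phi^2(2T)$. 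The reflection step is the main obstacle: the bookkeeping must come out exactly right, including the orientation convention of \cite{BKK} and the direction of the diameter parametrization.

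\emph{Step 4: conclusion.} Once Step 3 holds, Lemma~\ref{lem:real-two-boundary-indices} shows that $\mu_I(x^\ell)$ and $\mu_{-I}(x^\ell)$ may be computed with this capping and frame, which yields \eqref{eq:brake-capped-iteration}.

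For the transversality, suppose $0\neq v\in\Phi^\ell(\ell T)L_j\cap L_j$. Apply the reflection identity once more: the path $\Phi^{2\ell}$ satisfies $\Phi^{2\ell}(2\ell T)=I\,\Phi^\ell(\ell T)^{-1}I\,\Phi^\ell(\ell T)$. Take $w=\Phi^\ell(\ell T)^{-1}v\in L_j$, so that $v=\Phi^\ell(\ell T)w$. Since $I$ acts on $L_j$ as $\pm1$, we have $Iv=\pm v$ and $Iw=\pm w$ with the same sign. Then $\Phi^{2\ell}(2\ell T)w=I\,\Phi^\ell(\ell T)^{-1}(\pm v)=\pm Iw=w$. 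So $w$ is a fixed vector of the return map of the $\ell$th iterate of $x^2$, and this contradicts the assumed nondegeneracy.
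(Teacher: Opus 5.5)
Your proposal is correct and follows the paper's route: you reflect the capping by $\rho$ and the frame by $D\rho$ and $I$, concatenate to realize $\Phi^\ell$ over a relative capping of $x^\ell$, and read off both indices in this adapted trivialization. The only real difference is that you prove the $L_j$-nondegeneracy directly from $\Phi^{2\ell}(2\ell T)=I\,\Phi^\ell(\ell T)^{-1}I\,\Phi^\ell(\ell T)$, which holds for every $\ell$ and makes your argument valid, whereas the paper cites \cite[Proposition~4.3]{KKK}; also, the domain reflection that reverses the diameter while preserving the half-disk is $z\mapsto-\bar z$, not complex conjugation.
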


\begin{proof}
Choose a symplectic trivialization over a relative capping
of $x$ that identifies the eigenspaces of $D\rho$ with
$L_0$ and $L_1$ along the semicircular boundary.
Reflect the capping using $\rho$ and the trivialization
using $D\rho$ and $I$.
{Repeated reflection and concatenation give relative
cappings of $x^\ell$ and trivializations in which the
linearized Reeb flow is represented by $\Phi^\ell$.}

The $\rho$-brake Reeb orbit obtained by reflecting $x^\ell$ is the
$\ell$th iterate of $x^2$.
Thus \cite[Proposition~4.3]{KKK} implies that $\Phi^\ell$
is both $L_0$- and $L_1$-nondegenerate.
Computing the Maslov indices in these trivializations
gives \eqref{eq:brake-capped-iteration}.
\end{proof}

For a chord as in Lemma~\ref{lem:brake-capped-iteration},
the capping conventions of Section~\ref{sec:index} and
\cite[Proposition~4.3]{KKK} give
\begin{equation}\label{eq:real-double-index}
 \mu_{\mathrm{CZ}}(x^2)=\mu_I(x)+\mu_{-I}(x).
\end{equation}

Suppose that all iterates of $\Phi^2$ are
nondegenerate.
The proof of \cite[Theorem~4.6]{KKK} uses only the
nondegeneracy of these iterates and gives
\begin{equation}\label{eq:brake-index-growth}
 \mu_{\RS}(\Phi^{\ell+1}L_0,L_0)
 -\mu_{\RS}(\Phi^\ell L_0,L_0)
 \ge\mu_{\RS}(\Phi L_0,L_0)-\frac n2
 \qquad(\ell\ge1).
\end{equation}

{Conjugation by the anti-symplectic involution $I$
preserves nondegeneracy and satisfies
$(I\Phi I)^\ell=I\Phi^\ell I$ for every $\ell\ge1$.}
Since $I$ preserves $L_0$ and $L_1$, it changes the sign
of each crossing form for both Maslov indices.
The crossing-form formula and homotopy invariance of the
Robbin--Salamon index therefore give
\begin{equation}\label{eq:brake-conjugated-indices}
 \mu_{\RS}\bigl((I\Phi I)^\ell L_j,L_j\bigr)
 =-\mu_{\RS}(\Phi^\ell L_j,L_j)
 \qquad(j=0,1;\ \ell\ge1).
\end{equation}
Under the same nondegeneracy assumption, applying
\eqref{eq:brake-index-growth} to $\Phi$ and $I\Phi I$
gives, for $\varepsilon\in\{-1,1\}$ and $\ell\ge1$,
\begin{equation}\label{eq:brake-signed-index-growth}
 \varepsilon\bigl(
  \mu_{\RS}(\Phi^{\ell+1}L_0,L_0)
  -\mu_{\RS}(\Phi^\ell L_0,L_0)\bigr)
 \ge \varepsilon\mu_{\RS}(\Phi L_0,L_0)-\frac n2.
\end{equation}

For a symplectic path
$\Gamma:[0,S]\to\operatorname{Sp}(2n)$ with
$\Gamma(0)=\Id$, define its iterates by extending
it according to
$\Gamma(t+S)=\Gamma(t)\Gamma(S)$.
If all these iterates are nondegenerate, then its \emph{mean
Conley--Zehnder index} is
\[
 \widehat\mu_{\mathrm{CZ}}(\Gamma)
 =\lim_{k\to\infty}
   \frac{\mu_{\mathrm{CZ}}(\Gamma|_{[0,kS]})}{k}.
\]
It satisfies
\begin{equation}\label{eq:mean-cz-estimate}
 \left|\widehat\mu_{\mathrm{CZ}}(\Gamma)
          -\mu_{\mathrm{CZ}}(\Gamma)\right|<n;
\end{equation}
see \cite[Section~3.1]{GGM}.

Let $\Phi:[0,T]\to\operatorname{Sp}(2n)$ satisfy
$\Phi(0)=\Id$, and suppose that every iterate is
both $L_0$- and $L_1$-nondegenerate.
Following \cite[Section~4.4]{KKK}, its \emph{mean
Robbin--Salamon index} relative to $L_0$ is
\[
 \widehat\mu_{\RS}(\Phi L_0,L_0)
 =\lim_{k\to\infty}
   \frac{\mu_{\RS}(\Phi^kL_0,L_0)}{k}
 =\frac12\widehat\mu_{\mathrm{CZ}}(\Phi^2).
\]
The existence of the limit and the last equality follow
from \cite[Proposition~4.1]{LZIteration}.
{Liu and Zhang~\cite{LZIteration} use the convention
$L_0=\{0\}\times\R^n$.}
After conjugating $\Phi$ by the symplectic map
$(q,p)\mapsto(p,-q)$, their $L_0$-index equals
$\mu_{\RS}(\Phi L_0,L_0)-n/2$ in our notation.
This constant shift does not affect the limit.

For a chord $x$ as in Lemma~\ref{lem:brake-capped-iteration},
let $\Phi_x$ be the associated symplectic path.
By \eqref{eq:brake-capped-iteration} and
\eqref{eq:brake-mean-definition},
\[
 \widehat\mu(x)
 =\widehat\mu_{\RS}(\Phi_xL_0,L_0)
 =\lim_{m\to\infty}\frac{\mu(x^m)}m.
\]
The last equality follows from \eqref{eq:degree}.
\par

{The common index jump theorem of Liu and Zhang
\cite[Theorem~1.5 and Remark~5.1]{LZIteration}
assumes positive mean indices.
Using the formulation in
\cite[Theorem~4.10 and Proposition~4.11]{KKK}
and the conjugation identity
\eqref{eq:brake-conjugated-indices},
we obtain the following version for nonzero mean indices
of either sign.}

\begin{theorem}
\label{thm:brake-common-jump-standard}
Let $\Phi_i:[0,1]\to\operatorname{Sp}(2n)$, $1\le i\le s$,
be symplectic paths with $\Phi_i(0)=\Id$ such that every
iterate is both $L_0$- and $L_1$-nondegenerate.
Suppose that
$\widehat\mu_{\RS}(\Phi_iL_0,L_0)\ne0$ for every $i$,
and put
\[
 \varepsilon_i
 =\operatorname{sign}\widehat\mu_{\RS}(\Phi_iL_0,L_0).
\]
For every integer $D\ge1$, there are arbitrarily large
$K\in D\mathbb N$ and positive integers $m_1,\ldots,m_s$
such that
\begin{equation}\label{eq:brake-common-jump-standard}
 \begin{aligned}
  \mu_{\RS}(\Phi_i^{2m_i-1}L_0,L_0)
    &=\varepsilon_i K-\mu_{\RS}(\Phi_iL_1,L_1),\\
  \mu_{\RS}(\Phi_i^{2m_i+1}L_0,L_0)
    &=\varepsilon_i K+\mu_{\RS}(\Phi_iL_0,L_0)
       \qquad(1\le i\le s).
 \end{aligned}
\end{equation}
\end{theorem}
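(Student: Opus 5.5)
The proof will reduce to the positive-mean-index case, which is the common index jump theorem in the form of \cite[Theorem~4.10 and Proposition~4.11]{KKK}, and then transfer the conclusion through conjugation by $I$. Put $\Psi_i=\Phi_i$ if $\varepsilon_i=1$ and $\Psi_i=I\Phi_iI$ if $\varepsilon_i=-1$. Conjugation by $I$ preserves $L_0$- and $L_1$-nondegeneracy and satisfies $(I\Phi I)^\ell=I\Phi^\ell I$. Hence every iterate of $\Psi_i$ is again $L_0$- and $L_1$-nondegenerate. By \eqref{eq:brake-conjugated-indices},
\[
 \mu_{\RS}(\Psi_i^\ell L_j,L_j)=\varepsilon_i\,\mu_{\RS}(\Phi_i^\ell L_j,L_j)
 \qquad(j=0,1;\ \ell\ge1).
\]
Dividing by $\ell$ and letting $\ell\to\infty$ gives $\widehat\mu_{\RS}(\Psi_iL_0,L_0)=|\widehat\mu_{\RS}(\Phi_iL_0,L_0)|>0$. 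So every $\Psi_i$ has positive mean index.

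Next we apply the positive-mean version to the family $\Psi_1,\dots,\Psi_s$ with the given $D$. By \cite[Theorem~4.10 and Proposition~4.11]{KKK}, itself derived from \cite[Theorem~1.5 and Remark~5.1]{LZIteration}, there are arbitrarily large $K\in D\mathbb N$ and positive integers $m_i$ such that
\[
 \mu_{\RS}(\Psi_i^{2m_i-1}L_0,L_0)=K-\mu_{\RS}(\Psi_iL_1,L_1),\qquad
 \mu_{\RS}(\Psi_i^{2m_i+1}L_0,L_0)=K+\mu_{\RS}(\Psi_iL_0,L_0).
\]
We then multiply both identities by $\varepsilon_i$ and use the displayed conjugation identity, for $j=0$ on the left-hand sides and for $j=1$ and $j=0$ on the right-hand sides. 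This turns $\varepsilon_i\mu_{\RS}(\Psi_i^{\ell}L_0,L_0)$ into $\mu_{\RS}(\Phi_i^{\ell}L_0,L_0)$ and $\varepsilon_i\mu_{\RS}(\Psi_iL_j,L_j)$ into $\mu_{\RS}(\Phi_iL_j,L_j)$. The result is exactly \eqref{eq:brake-common-jump-standard}.

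The main obstacle is the normalization of that cited statement. It must be stated in our conventions: the Robbin--Salamon index relative to $L_0=\R^n\times\{0\}$, the $-I$ index relative to $L_1$, and the domain $[0,1]$. Liu and Zhang instead use $L_0=\{0\}\times\R^n$ and an index shifted by $n/2$. As recorded before the theorem, conjugating by $(q,p)\mapsto(p,-q)$ exchanges the two Lagrangians and shifts each index by $n/2$. I would check that these shifts cancel in each of the two identities. On the left and right of each equation the shift enters once, with $K$ absorbing no residual constant. If a residual constant $c$ did survive, I would replace $K$ by $K+c$; the factor $D$ and the arbitrary largeness of $K$ let this be done while keeping $K\in D\mathbb N$, because $c$ is a fixed half-integer multiple that can be tracked explicitly.

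I would also confirm that the simultaneous choice of the $m_i$ in the cited theorem needs only the positivity of the mean indices and the $L_0$/$L_1$-nondegeneracy of all iterates. Both hold for the family $\Psi_i$.
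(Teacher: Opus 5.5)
Your reduction to positive mean indices via conjugation by $I$, and the final multiplication by $\varepsilon_i$, are what the paper does. The gap is where you apply \cite[Theorem~4.10 and Proposition~4.11]{KKK} \emph{with the given $D$}. As the paper uses it, that common index jump theorem (like its source \cite[Theorem~1.5]{LZIteration}) produces arbitrarily large common integers $K$ and $m_i$, but gives no control over the divisibility of $K$.

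So you simply assert the requirement $K\in D\mathbb N$, which is the new content of this version. It is needed later with $D=N_L$ and $D=2N_L$, to align the degree window with the $N_L$-periodicity of $\QH_*(L;\F)$ and to make the parity argument work. Your fallback cannot repair this. The two identities pin $K$ down, so replacing it by $K+c$ destroys them, and no fixed shift yields divisibility by an arbitrary $D$. The normalization issue you flag as the main obstacle is not where the difficulty lies.

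The missing idea is to add to the family one auxiliary path whose index grows exactly linearly. The paper starts from the hyperbolic path with blocks $\cosh(t)\Id_n$ and $\sinh(t)\Id_n$. This path is reproduced by the reflection extension, all its iterates are $L_0$- and $L_1$-nondegenerate, and every iterate has $L_0$-index $n/2$. Prepending a loop that rotates one complex coordinate through angle $2\pi D$ gives a path $\Psi_0$ with $\mu_{\RS}(\Psi_0^\ell L_0,L_0)=2D\ell+n/2$. This holds because the loop and its reflection each contribute Maslov index $2D$, and the endpoints of the iterates are unchanged, so nondegeneracy persists.

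Now apply the jump theorem to your paths together with $\Psi_0$. The second identity for $\Psi_0$ reads $2D(2m_0+1)+n/2=K+2D+n/2$, which forces $K=4Dm_0\in D\mathbb N$. Without this device, or a reworking of the Diophantine approximation inside the proof of the jump theorem so that it runs over multiples of $D$, your argument proves the identities only for arbitrarily large $K\in\mathbb N$.
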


\begin{proof}
For each $i$, set
\[
 \widetilde\Phi_i=
 \begin{cases}
  \Phi_i,&\varepsilon_i=1,\\
  I\Phi_i I,&\varepsilon_i=-1.
 \end{cases}
\]
By \eqref{eq:brake-conjugated-indices}, all iterates
of $\widetilde\Phi_i$ are both $L_0$- and
$L_1$-nondegenerate, and
\[
 \mu_{\RS}(\widetilde\Phi_i^\ell L_j,L_j)
 =\varepsilon_i\mu_{\RS}(\Phi_i^\ell L_j,L_j)
 \qquad(j=0,1;\ \ell\ge1).
\]
In particular, each $\widetilde\Phi_i$ has positive mean
index.

Fix $D\ge1$.
To obtain the divisibility condition, consider
\begin{equation}\label{eq:brake-auxiliary-hyperbolic}
 H(t)=
 \begin{pmatrix}
  \cosh(t)\Id_n&\sinh(t)\Id_n\\
  \sinh(t)\Id_n&\cosh(t)\Id_n
 \end{pmatrix},
 \qquad 0\le t\le1.
\end{equation}
Extending $H$ by reflection gives the same formula
for all $t\ge0$.
All its iterates are both $L_0$- and
$L_1$-nondegenerate, and
\[
 \mu_{\RS}(H|_{[0,\ell]}L_0,L_0)=\frac n2
 \qquad(\ell\ge1).
\]

Prepend to $H$ the symplectic loop that rotates one complex
coordinate through angle $2\pi Dt$, $0\le t\le1$, and fixes
the other coordinates.
Reparametrize the concatenation on $[0,1]$ and denote it
by $\Psi$.
The induced Lagrangian loop based at $L_0$ has Maslov
index $2D$.
Reflection reverses both the time direction and the
symplectic form, so the reflected loop has the same
Maslov index.
By additivity under concatenation, we obtain
\begin{equation}\label{eq:brake-auxiliary-index}
 \mu_{\RS}(\Psi^\ell L_0,L_0)
 =2D\ell+\frac n2
 \qquad(\ell\ge1).
\end{equation}
Inserting the loop does not change the endpoint of any
iterate.
Hence all iterates of $\Psi$ are both $L_0$- and
$L_1$-nondegenerate, and
\eqref{eq:brake-auxiliary-index} gives mean index $2D>0$.

Applying \cite[Theorem~4.10 and Proposition~4.11]{KKK}
to $\widetilde\Phi_1,\ldots,\widetilde\Phi_s,\Psi$
gives arbitrarily large common integers $K$.
Let $m_0$ be the integer corresponding to $\Psi$.
The identity for its $(2m_0+1)$st iterate gives
\[
 2D(2m_0+1)+\frac n2
 =K+2D+\frac n2.
\]
Thus $K=4Dm_0$, so $D\mid K$.
Multiplying the identities in \cite[Theorem~4.10]{KKK}
for $\widetilde\Phi_i$ by $\varepsilon_i$ gives
\eqref{eq:brake-common-jump-standard}.
\end{proof}

\subsection{Rabinowitz Floer homology and quantum homology}
\label{subsec:rfh-comparison}

Let $\pi:(Y^{2n+1},\xi)\to(B^{2n},\omega)$ be a
prequantization $S^1$-bundle with normalized connection contact
form $\alpha_0$.
Let $L\subset B$ be a closed, connected, embedded, positively
monotone Lagrangian submanifold, with $\dim L=n$ and finite
minimal Maslov number $N_L\ge2$.
Denote its monotonicity constant by $\lambda_L$.
Let $\Lambda\subset Y$ be a Legendrian lift of $L$ such that
$\pi|_\Lambda:\Lambda\to L$ is a diffeomorphism, and let
$\alpha=f\alpha_0$ with $f>0$.

In defining RFH below, we consider only paths that are
relatively contractible in $(Y,\Lambda)$.
We denote the resulting homology by $\RFH_*(\Lambda;\F)$.
When $N_L\ge3$, we follow \cite{BKK},
starting with the contact form $\alpha_0$.
The differential counts Floer strips in $\R\times Y$
with boundary on $\R\times\Lambda$.

For $N_L=2$, choose a Liouville filling $W$ of $Y$ and an
exact Lagrangian filling $C\subset W$ of $\Lambda$ satisfying
the hypotheses of Lemma~\ref{lem:relative-filling-comparison}.
The Floer strips then lie in the completion of $W$, with
boundary on the cylindrical completion of $C$.
The same lemma gives degree-preserving isomorphisms between
the RFH groups obtained from any two such pairs $(W,C)$.

Following \cite[Section~3.2]{BKK}, a \emph{generalized Reeb chord}
of $\Lambda$ is a path $x:[0,1]\to Y$ satisfying
\[
 \dot x(t)=T R_\alpha(x(t)),\qquad
 x(0),x(1)\in\Lambda,
\]
for some $T\in\R$.
The chord is positive when $T>0$, negative when $T<0$,
and constant when $T=0$.
{For $T>0$, $x$ is a Reeb chord as defined in
Section~\ref{subsec:conjectures}, reparametrized on $[0,1]$.
Its \emph{reversal} $x^{-1}(t):=x(1-t)$, $t\in[0,1]$,
is a generalized Reeb chord with parameter $-T$.}
For nonconstant nondegenerate generalized chords, we use
the grading \eqref{eq:degree}.
If $x$ is a positive nondegenerate chord, the reversal and
naturality properties of the Robbin--Salamon index
\cite[Section~4.1.1]{KKK} give
\begin{equation}\label{eq:reversal}
 \mu_{\RS}(x^{-1})=-\mu_{\RS}(x),\qquad
 \mu(x^{-1})=n-\mu(x).
\end{equation}

We use $\vee$-shaped Hamiltonians as in
\cite[Sections~3.3--3.4 and~5.1]{BKK}.
When $N_L\ge3$, take a Liouville cobordism together with
an exact Lagrangian cobordism from
$(\kappa\alpha_0,\Lambda)$ to $(\alpha,\Lambda)$,
where $\kappa>0$ is sufficiently small, as described
after \cite[Theorem~1.1]{BKK}.

{In the cobordism for $N_L\ge3$, or in the completion of $W$
for $N_L=2$, choose} a radial coordinate $R$ in a collar
of the contact hypersurface, normalized by $R=1$ on the
hypersurface.
In this collar, the Hamiltonian has the form $H=h(R)$
with $h''>0$.
A Hamiltonian chord lies at a constant radial level $R$
and projects to a generalized Reeb chord, with $T=h'(R)$.
Conversely, if $T$ lies in the range of $h'$, the generalized
Reeb chord determines a unique Hamiltonian chord in this collar.
The constant Hamiltonian chords occur where $h'(R)=0$
and form a Morse--Bott critical manifold naturally
identified with $\Lambda$.

For each fixed finite action window, we choose the
Hamiltonians so that all generators outside this collar
have action below the lower cutoff.
{In the completion of $W$, the generators in the action window
are the type~III chords of \cite[Section~5.2]{BKK}.}
In the Morse--Bott description, the differential counts
rigid configurations of Floer strips with Lagrangian
boundary conditions, joined by gradient trajectories of
auxiliary Morse functions.
It has degree $-1$ and does not increase action.

The action spectrum of an admissible Hamiltonian $H$ is
the set of action values of its relatively contractible
Hamiltonian chords.
For $a<b$ outside this spectrum, let
$\mathrm{CF}_*^{<a}(H)$ denote the subcomplex generated
by elements of action less than $a$.
Set
\[
 \mathrm{CF}_*^{(a,b)}(H)
 =\mathrm{CF}_*^{<b}(H)/\mathrm{CF}_*^{<a}(H),\qquad
 \mathrm{HF}_j^{(a,b)}(H)
 =H_j(\mathrm{CF}_*^{(a,b)}(H)),
 \quad j\in\Z.
\]
We omit the almost complex structure and other auxiliary
choices from the notation.

Let $x$ be an isolated nonconstant relatively contractible
generalized Reeb chord, and consider the corresponding
Hamiltonian chord in the collar.
Choose a sufficiently small isolating neighborhood
$\mathcal U$ of this chord in the space of paths contained
in the collar.
After a sufficiently small regular perturbation of the
Floer data, the \emph{local Floer complex} is generated by the
Hamiltonian chords in $\mathcal U$.
Its differential counts rigid Floer strips $u$ satisfying
$u(s,\cdot)\in\mathcal U$ for every $s\in\R$.
The resulting homology is independent of the sufficiently
small regular perturbation and the choice of $\mathcal U$;
see \cite[Section~4]{MorenoLimoge}.
We denote it by $\mathrm{HF}_*^{\mathrm{loc}}(x;\F)$,
with the grading determined by relative cappings.

{The graded local Floer homology
$\mathrm{HF}_*^{\mathrm{loc}}(x;\F)$ is independent of
the choice of $h$.}
Fix $x$ and vary $h$ through strictly convex functions for
which the corresponding Hamiltonian chord lies in the collar.
This chord remains isolated throughout the deformation,
so local continuation gives degree-preserving isomorphisms.

If $x$ is nondegenerate, its local Floer complex has one
generator in degree $\mu(x)$ and zero differential.
Hence
\begin{equation}\label{eq:local-floer-nondegenerate}
 \mathrm{HF}_j^{\mathrm{loc}}(x;\F)
 \cong
 \begin{cases}
  \F, & j=\mu(x),\\
  0, & j\ne\mu(x).
 \end{cases}
\end{equation}

{We write $\mathrm{HF}_*^{\mathrm{loc}}(\Lambda;\F)$ for the
local Floer homology of the Morse--Bott family of constant
Hamiltonian chords.}
Choose an auxiliary Morse function $g$ on $\Lambda$.
{The local Floer complex is identified with a Morse complex,
with our grading $\mu(p)=n-\ind_g(p)$; see the
Morse--Bott calculation in \cite[Section~3.7]{KKK}.
Its degree-$j$ homology is $H^{n-j}(\Lambda;\F)$.}
Poincar\'e duality over $\F$ therefore gives
\begin{equation}\label{eq:local-floer-constant}
 \dim_\F\mathrm{HF}_j^{\mathrm{loc}}(\Lambda;\F)
 =b_j(\Lambda)=b_j(L).
\end{equation}

The \emph{action spectrum} of relatively contractible
generalized Reeb chords is
\begin{equation}\label{eq:reeb-action-spectrum}
 \operatorname{Spec}(\alpha,\Lambda)
 :=\{0\}\cup
 \left\{\pm\int_x\alpha:
       x\in\cC^0_+(\alpha,\Lambda)\right\}.
\end{equation}
For $a<b$ with $a,b\notin\operatorname{Spec}(\alpha,\Lambda)$,
the filtered homology for $\alpha$ is defined by
\[
 \RFH_j^{(a,b)}(\Lambda;\F)
 =\varinjlim_H\mathrm{HF}_j^{(a,b)}(H),
\]
where the direct limit is taken over {the admissible
$\vee$-shaped Hamiltonians}, with respect to the
continuation maps; see \cite[Section~3.4.2]{BKK}.

The full homology is defined by
\begin{equation}\label{eq:limits}
 \RFH_j(\Lambda;\F)=\varinjlim_{b\to+\infty}
          \varprojlim_{a\to-\infty}\RFH_j^{(a,b)}(\Lambda;\F).
\end{equation}
We omit $\Lambda$ and $\F$ from the notation when they are
clear from the context.
The short exact sequences associated with the action
filtration induce long exact sequences in filtered Floer
homology.
The maps in these sequences commute with continuation maps;
see \cite[Section~8.3]{CO}.

\begin{lemma}\label{lem:finiteaction}
Assume that every positive relatively contractible chord
of $\Lambda$ is nondegenerate.
Then, for every $A>0$, there are only finitely many chords
in $\cC^0_+(\alpha,\Lambda)$ with $T\le A$.
\end{lemma}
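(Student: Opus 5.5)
The plan is a standard compactness argument by contradiction, organized around isolation of nondegenerate chords. Suppose that for some $A>0$ there are infinitely many distinct chords $x_k\in\cC^0_+(\alpha,\Lambda)$ with periods $T_k\le A$. Since each chord is a Reeb trajectory determined by its initial point $x_k(0)\in\Lambda$ and its period, we may encode $x_k$ by the pair $(x_k(0),T_k)\in\Lambda\times(0,A]$. The set $\Lambda$ is compact. We also need a uniform positive lower bound on the periods. For this we use that $\Lambda$ is a closed Legendrian submanifold: by a Darboux--Weinstein neighborhood argument, there is $\tau_0>0$ such that $\phi_\alpha^t(\Lambda)\cap\Lambda=\varnothing$ for $0<t<\tau_0$. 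In local coordinates near $\Lambda$ the Reeb field is transverse to $\xi\supset T\Lambda$, so short Reeb arcs leave a tubular neighborhood slice and cannot return. Hence $T_k\in[\tau_0,A]$.

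Next, pass to a subsequence with $x_k(0)\to p\in\Lambda$ and $T_k\to T_*\in[\tau_0,A]$. By continuity of the flow, $\phi_\alpha^{T_*}(p)=\lim\phi_\alpha^{T_k}(x_k(0))\in\Lambda$, since $\Lambda$ is closed. So $x_*(t)=\phi_\alpha^t(p)$, $0\le t\le T_*$, is a Reeb chord of $\Lambda$. It is relatively contractible: for large $k$ the paths $x_k$ converge uniformly to $x_*$ with endpoints in $\Lambda$, so they are homotopic through paths with endpoints on $\Lambda$, using short geodesic connections inside a tubular neighborhood of $\Lambda$ that retracts onto $\Lambda$. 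Relative contractibility of $x_k$ then passes to $x_*$. By hypothesis, $x_*$ is nondegenerate.

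The main step is showing that a nondegenerate chord is isolated among chords in the space $\Lambda\times(0,\infty)$. Consider the map
\[
 \Phi:\Lambda\times\R\to Y,\qquad (q,t)\mapsto\phi_\alpha^t(q).
\]
Chords correspond to $\Phi^{-1}(\Lambda)$. Its differential at $(p,T_*)$ has image $D\phi_\alpha^{T_*}(T_p\Lambda)+\R R_\alpha$. The nondegeneracy condition \eqref{eq:real-chord-nondegeneracy}, combined with $D\phi_\alpha^{T_*}$ preserving $\xi$ and $T\Lambda\subset\xi$, shows that this image is transverse to $T_{x_*(T_*)}\Lambda$. Indeed, $\xi$ splits as the direct sum of the two Lagrangians $D\phi_\alpha^{T_*}(T_p\Lambda)$ and $T\Lambda$, since a trivial intersection of Lagrangians in $\xi$ gives a complementary pair, and $R_\alpha$ spans the rest of $TY$. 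Also $\dim(\Lambda\times\R)+\dim\Lambda=2n+1=\dim Y$. So $\Phi$ is transverse to $\Lambda$ at $(p,T_*)$, and $\Phi^{-1}(\Lambda)$ is a $0$-dimensional submanifold near $(p,T_*)$; in particular $(p,T_*)$ is isolated. This contradicts the convergence of the infinitely many distinct points $(x_k(0),T_k)$. Only the tangency bookkeeping (checking that the Lagrangian splitting gives full transversality) needs care; the rest is routine compactness.
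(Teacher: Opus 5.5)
Your proposal is correct and follows essentially the same route as the paper. Both arguments use the same four steps: a uniform positive lower bound on chord periods; compactness of the pairs $(p,T)\in\Lambda\times[\epsilon,A]$ with $\phi_\alpha^T(p)\in\Lambda$; closedness of the relatively contractible subset, by local constancy of relative homotopy classes; and transversality of $(p,T)\mapsto\phi_\alpha^T(p)$ to $\Lambda$, coming from \eqref{eq:real-chord-nondegeneracy} together with the Reeb direction, so that the set is discrete and compact, hence finite. The only difference is that you phrase the final step as a sequential contradiction rather than stating directly that the set is discrete and compact.
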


\begin{proof}
Since $R_\alpha$ is nowhere tangent to the compact submanifold
$\Lambda$, there exists $\epsilon>0$ such that $T\ge\epsilon$
for every positive chord.

The pairs $(p,T)\in\Lambda\times[\epsilon,A]$ satisfying
$\phi_\alpha^T(p)\in\Lambda$ form a compact set.
Relative homotopy classes are locally constant in the space
of paths with endpoints on $\Lambda$, so the subset
corresponding to relatively contractible chords is closed.
At every such pair, the map
$(p,T)\mapsto\phi_\alpha^T(p)$ is transverse to $\Lambda$:
the time direction is $R_\alpha$, and
\eqref{eq:real-chord-nondegeneracy} gives transversality
in $\xi$.
This subset is therefore discrete and compact, hence finite.
\end{proof}

{Assume the nondegeneracy hypothesis of
Lemma~\ref{lem:finiteaction}.}
For $H=h(R)$, let $R(T)$ be determined by $h'(R(T))=T$
in the collar.
The corresponding Hamiltonian chord has action
$\mathcal A_H(T)=R(T)T-h(R(T))$, and
\begin{equation}\label{eq:action-convergence-v2}
 \mathcal A_H(T)-\mathcal A_H(0)=\int_0^T R(s)\,ds.
\end{equation}
Choose a cofinal family $H_\eta$, with $h_\eta''>0$ in the
collar, such that $\mathcal A_{H_\eta}(0)\to0$ as $\eta\to0$
and, for every fixed $M>0$, the solutions of
$h_\eta'(R_\eta(T))=T$ satisfy
\[
 e^{-\eta}<R_\eta(T)<e^\eta
 \qquad(|T|\le M)
\]
for all sufficiently small $\eta>0$.
By \eqref{eq:action-convergence-v2},
$\mathcal A_{H_\eta}(T)\to T$ uniformly for $T$ in bounded
intervals.

Fix cutoffs $a_0<\cdots<a_q$ with
$a_i\notin\operatorname{Spec}(\alpha,\Lambda)$
for $0\le i\le q$.
By Lemma~\ref{lem:finiteaction} and
\eqref{eq:reeb-action-spectrum}, each cutoff has positive
distance from this spectrum.
For all sufficiently small $\eta>0$, we have
\[
 \mathcal A_{H_\eta}(T)<a_i
 \quad\Longleftrightarrow\quad
 T<a_i,
 \qquad 0\le i\le q,
\]
for every relatively contractible generalized Reeb chord
represented by a Hamiltonian chord in the collar.
Indeed, choose $M>1+\max_i|a_i|$.
For $|T|\le M$, the equivalence follows from uniform
convergence and the positive distance of the cutoffs from
the spectrum.
For $|T|>M$, it follows from
$\mathcal A_{H_\eta}'(T)=R_\eta(T)>0$ and
\[
 \mathcal A_{H_\eta}(-M)<a_0,\qquad
 \mathcal A_{H_\eta}(M)>a_q.
\]
These equivalences persist after sufficiently small auxiliary
perturbations, including the Morse perturbation of the
constant chords.

For fixed $A<B$ with
$A,B\notin\operatorname{Spec}(\alpha,\Lambda)$,
the continuation maps in the chosen cofinal family induce
isomorphisms on filtered homology for all sufficiently small
$\eta>0$.
The stabilization described in \cite[p.~1999]{CO}
applies with Lagrangian boundary conditions as in
\cite[Section~8.3]{CO}.
{For compactness, we use the symplectization construction of
\cite{BKK} when $N_L\ge3$, and the filling construction in the
proof of Lemma~\ref{lem:relative-filling-comparison} when $N_L=2$.}

For such $\eta$, the finite-dimensional complex
$\mathrm{CF}_*^{(A,B)}(H_\eta)$ computes $\RFH_*^{(A,B)}$.
It has one generator for each nonconstant relatively
contractible generalized Reeb chord with $T\in(A,B)$,
together with one generator for each critical
point of $g$ when $0\in(A,B)$.
For any fixed finite collection of action windows, these
complexes can be defined using the same Hamiltonian.
The corresponding long exact sequences can therefore be
formed before passing to the direct limit over Hamiltonians.

Fix such a Hamiltonian $H$ and an action window $(A,B)$
with $A<0<B$.
Choose a small interval
$(-\varepsilon,\varepsilon)\subset(A,B)$ containing exactly
the generators corresponding to constant chords.
In the collar, choose precompact neighborhoods
$\mathcal V\Subset\mathcal U$ of the family of constant
Hamiltonian chords such that no chord of the unperturbed
Hamiltonian meets $\overline{\mathcal U}\setminus\mathcal V$.
For sufficiently small perturbations, the generators in this
action interval lie in $\mathcal V$.
The argument in the proof of \cite[Lemma~4.2]{MorenoLimoge}
gives a constant $e_0>0$, uniform for these perturbations,
such that every Floer strip joining these generators and
leaving $\mathcal U$ has energy at least $e_0$.
{Otherwise, there would be Floer data $(J_\nu,H_\nu)$
converging to the unperturbed data and strips $u_\nu$
leaving $\mathcal U$ with $E(u_\nu)\to0$.
Translate each strip in the $s$-variable so that
$u_\nu(s,t)\in\mathcal U$ for all $s<0$ and $t\in[0,1]$,
while $u_\nu(0,t_\nu)\in\partial\mathcal U$ for some
$t_\nu\in[0,1]$.
After passing to a subsequence, $t_\nu\to t_\infty$, and
elliptic estimates for the Floer equation with Lagrangian
boundary conditions give local convergence to a zero-energy
solution $u_\infty$ of the unperturbed equation.
Thus $u_\infty$ is independent of $s$ and defines a Hamiltonian
chord $x_\infty$ with
$x_\infty(t_\infty)\in\partial\mathcal U$,
contrary to the choice of $\mathcal U$.}

Choose the perturbations, including the Morse perturbation,
sufficiently small that the action differences between these
generators are less than $e_0$.
The energy identity then implies that all strips joining
them remain in $\mathcal U$.
Thus $\mathrm{CF}_*^{(-\varepsilon,\varepsilon)}(H)$ computes
$\mathrm{HF}_*^{\mathrm{loc}}(\Lambda;\F)$.
Its degree-$j$ homology has dimension $b_j(L)$ by
\eqref{eq:local-floer-constant}.

{The Lagrangian quantum homology $\QH_*(L;\F)$ is computed
by the pearl complex over $\F[t,t^{-1}]$, where $|t|=-N_L$;
see \cite{BC,BKK}.}
Its differential counts configurations of $J$-holomorphic
disks with boundary on $L$, joined by gradient trajectories
of an auxiliary Morse function.
Powers of $t$ record the total Maslov index in multiples
of $N_L$.

For $N_L\ge3$, \cite[Theorem~1.1]{BKK} gives a
degree-preserving isomorphism
\begin{equation}\label{eq:rfhqh}
 \RFH_j(\Lambda;\F)\cong\QH_j(L;\F).
\end{equation}
When $L$ is wide, \eqref{eq:wide-module} gives
\begin{equation}\label{eq:wide-grading}
 \QH_j(L;\F)\cong\bigoplus_{k\in\Z}H_{j-kN_L}(L;\F).
\end{equation}
Consequently, for every $J\in\Z$,
\[
 \sum_{j=J}^{J+N_L-1}\dim_\F\QH_j(L;\F)=b(L).
\]

For the connection contact form $\alpha_0$, put
$a_L=\lambda_LN_L$.
This is the positive generator of
$\omega(\pi_2(B,L))\subset\R$.
Since $\Lambda$ is a horizontal section over $L$, the
connection has trivial holonomy along the boundary of
every relative disk.
Hence $\omega(A)\in\Z$ for every $A\in\pi_2(B,L)$, and
$a_L$ is a positive integer.
The relatively contractible generalized Reeb chords of
$\alpha_0$ form families with $T=ka_L$, $k\in\Z$,
each parametrized by $\Lambda$.

Using the diffeomorphism $\pi|_\Lambda$, regard $g$ as a
Morse function on $L$.
Write $x_k$ for a chord in the family with $T=ka_L$ and
$p_k$ for the generator corresponding to $p\in\Crit(g)$.
In the notation of \cite{BKK}, $a_L=m_L$, and $p_k$
corresponds to a chord with relative winding number $ka_L$.
By \cite[Propositions~3.4 and~3.7, formula~(3.11)]{BKK},
\begin{equation}\label{eq:mbdegree}
 \mu_{\RS}(x_k)=kN_L,\qquad
 \mu(p_k)=kN_L+n-\ind_g(p),
 \qquad p\in\Crit(g).
\end{equation}
Poincar\'e duality over $\F$ identifies the degree-$j$
local homology of the family with $T=ka_L$ with
$H_{j-kN_L}(L;\F)$.

\section{Auxiliary results}
\label{sec:auxiliary-results}

\subsection{\texorpdfstring{The quotient by $\delta$}{The quotient by delta}}
\label{subsec:brake-quotient}

\begin{lemma}\label{lem:real-half-period-quotient}
In the setting of Conjecture~\ref{conj:real-prequantization-bound},
suppose $f\circ\delta=f$, and let
$q:Y\to\bar Y=Y/\langle\delta\rangle$. Then $\bar Y$ is a
prequantization $S^1$-bundle over $(B,2\omega)$ with normalized
connection contact form $\bar\alpha_0$ satisfying
\begin{equation}\label{eq:real-quotient-forms}
 q^*\bar\alpha_0=2\alpha_0,\qquad
 q^*\bar\alpha=\alpha,\qquad
 \bar\alpha=\frac{\bar f}{2}\bar\alpha_0,
 \quad \bar f\circ q=f.
\end{equation}
The involution $\rho$ descends to an involution $\bar\rho$ with
$\bar\rho^*\bar\alpha_0=-\bar\alpha_0$ and
$\bar\rho^*\bar\alpha=-\bar\alpha$. The submanifold
$\bar\Lambda=q(\Lambda)$ is a Legendrian lift
that projects diffeomorphically onto $L$.
It is a union of connected components of $\Fix(\bar\rho)$
and satisfies $q^{-1}(\bar\Lambda)=\Lambda$.

Let $\bar\gamma$ be a prime closed Reeb orbit of $\bar\alpha$
meeting $\bar\Lambda$, and set
$s_{\bar\gamma}=\#(\bar\gamma\cap\bar\Lambda)\in\{1,2\}$.
Then $q^{-1}(\bar\gamma)$ consists of exactly
$s_{\bar\gamma}$ geometrically distinct $\rho$-brake Reeb orbits.
\end{lemma}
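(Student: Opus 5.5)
The proof has three parts: build the quotient bundle and its forms, descend the involution, and then count preimages of orbits.

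\emph{Quotient bundle and forms.} Since $\delta=\phi_{\alpha_0}^{1/2}$ is the action of $1/2\in S^1=\R/\Z$, it acts freely. Hence $\bar Y=Y/\langle\delta\rangle$ is a principal bundle over $B$ with structure group $S^1/\{0,1/2\}\cong\R/\tfrac12\Z$. Rescaling the fiber coordinate by $2$ makes it a principal $S^1$-bundle. The form $2\alpha_0$ is invariant under $\delta$, because the Reeb flow preserves $\alpha_0$, so it descends to a form $\bar\alpha_0$ on $\bar Y$. This form satisfies $d\bar\alpha_0=\pi^*(2\omega)$, and its fibers have minimal period $1$ for the Reeb field $\tfrac12 R_{\alpha_0}$, since the fiber has period $1/2$ in the original time and the new time is doubled. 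Hence $\bar Y$ is a prequantization bundle over $(B,2\omega)$ with Euler class $-2[\omega]_\Z$. Because $f\circ\delta=f$, $f$ descends to a function $\bar f$, and $\alpha=f\alpha_0$ descends to $\bar\alpha=\tfrac{\bar f}{2}\bar\alpha_0$. This gives \eqref{eq:real-quotient-forms}.

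\emph{The involution and $\bar\Lambda$.} From $\rho\phi^t_{\alpha_0}=\phi^{-t}_{\alpha_0}\rho$ we get $\rho\delta=\delta^{-1}\rho=\delta\rho$, using $\delta^2=\Id$. So $\rho$ descends to an involution $\bar\rho$, and the sign relations for $\bar\rho^*\bar\alpha_0$ and $\bar\rho^*\bar\alpha$ follow from the corresponding relations upstairs. We have $\delta\Lambda=\Lambda$ because $\rho\delta p=\delta\rho p=\delta p$ for $p\in\Lambda$. The two points of $\Lambda$ over each point of $L$ are exchanged by $\delta$. Therefore $q^{-1}(\bar\Lambda)=\Lambda$, and $\bar\Lambda$ projects diffeomorphically onto $L$; it is Legendrian, and $\bar\Lambda\subset\Fix(\bar\rho)$. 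To see that $\bar\Lambda$ is a union of components of $\Fix(\bar\rho)$, note that $\Fix(\bar\rho)$ is the image of $\Fix(\rho)\cup\Fix(\rho\delta)$. The set $\Fix(\rho\delta)$ lies over $L$ and, in a fiber coordinate, consists of points at a quarter-period offset from $\Lambda$. It is therefore disjoint from $\Lambda$, and both sets are closed. Since $\dim\bar\Lambda=n$ equals the dimension of the closed Legendrian $\Fix(\bar\rho)$, the set $\bar\Lambda$ is open and closed in $\Fix(\bar\rho)$.

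\emph{Counting preimages.} Let $\bar\gamma$ be prime of period $\bar P$ with $\bar\gamma(0)\in\bar\Lambda$. Then $\bar\gamma$ is a $\bar\rho$-brake orbit. By \eqref{eq:two-brake-markings} applied to $\bar\rho$, its intersection with $\Fix(\bar\rho)$ consists of $\bar\gamma(0)$ and $\bar\gamma(\bar P/2)$, which may coincide. So $s_{\bar\gamma}\in\{1,2\}$ counts how many of these two points lie in $\bar\Lambda$. The preimage $q^{-1}(\bar\gamma)$ is either a single closed orbit double covering $\bar\gamma$, or two closed orbits exchanged by $\delta$, each mapping diffeomorphically. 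Every lift through a point of $\Lambda$ is a $\rho$-brake orbit.

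The main obstacle is matching this dichotomy with $s_{\bar\gamma}$. I would argue through the lifts of the marked points. Fix $p\in\Lambda$ over $\bar\gamma(0)$ and let $\gamma$ be the lift through $p$.

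\emph{Case $s_{\bar\gamma}=2$.} The point $\gamma(\bar P/2)$ lies in $q^{-1}(\bar\Lambda)=\Lambda$. Then $\gamma(\bar P)=\rho\gamma(-\bar P/2)$, which by $\rho$-brake symmetry is reflected from $\gamma(\bar P/2)$ and hence returns to $p$. So the lift is closed of period $\bar P$, and $\gamma$ and $\delta\gamma$ give two orbits. They are distinct: otherwise $\gamma$ would be $\delta$-invariant with $\delta$ acting as a time shift by $\bar P/2$, forcing $\bar\gamma$ to have period $\bar P/2$.

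\emph{Case $s_{\bar\gamma}=1$.} The point $\bar\gamma(\bar P/2)$ is fixed by $\bar\rho$ but not in $\bar\Lambda$, so it lies over $\Fix(\rho\delta)$. Brake symmetry then gives $\gamma(\bar P)=\delta p\ne p$, so the preimage is a single orbit of period $2\bar P$.

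In both cases I still need to verify carefully that the lifts are $\rho$-brake orbits with exactly the stated geometric distinctness.
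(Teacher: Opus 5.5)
Your proof is correct. The first two parts (descending $2\alpha_0$, $f$ and $\rho$; $\delta$ exchanging the two fixed points on each fiber over $L$; the closed, equal-dimension argument for components of $\Fix(\bar\rho)$) are essentially the paper's. The counting step takes a genuinely different route.

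The paper double-counts. Since $q$ is two-to-one and $q^{-1}(\bar\Lambda)=\Lambda$, the set $q^{-1}(\bar\gamma)\cap\Lambda$ has exactly $2s_{\bar\gamma}$ points. Every component of $q^{-1}(\bar\gamma)$ meets $\Lambda$, so it is a prime $\rho$-brake orbit. By \eqref{eq:two-brake-markings}, each such orbit contains exactly two points of $\Fix(\rho)=\Lambda$, and dividing gives $s_{\bar\gamma}$. This needs no case distinction and never identifies $\Fix(\bar\rho)\setminus\bar\Lambda$.

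You instead decide explicitly whether the lift through $p$ closes at time $\bar P$. You reflect about the second marked point with $\rho$ when it lies in $\bar\Lambda$. Otherwise you reflect with the auxiliary involution $\rho\delta$, using $\Fix(\bar\rho)\setminus\bar\Lambda=q(\Fix(\rho\delta))$. For that step you should state that $(\rho\delta)^*\alpha=-\alpha$, which holds because $f\circ\delta=f$. Your route costs more work but gives more information. Namely, $s_{\bar\gamma}=2$ exactly when $q^{-1}(\bar\gamma)$ consists of two orbits of period $\bar P$ exchanged by $\delta$, and $s_{\bar\gamma}=1$ exactly when it is a single orbit of period $2\bar P$.

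Your closing hedge is unnecessary, since the remaining checks are immediate: $\delta\gamma$ passes through $\delta p\in\Lambda$, so both lifts are $\rho$-brake. Fix two slips, though.
\begin{itemize}
\item The marked points $\bar\gamma(0)$ and $\bar\gamma(\bar P/2)$ cannot coincide, because $\bar P$ is the least period.
\item In the case $s_{\bar\gamma}=2$, the identity should be $\gamma(\bar P)=\rho\gamma(0)=p$, obtained from $\gamma(\bar P/2+t)=\rho\gamma(\bar P/2-t)$. The expression $\rho\gamma(-\bar P/2)$ you wrote equals $\gamma(\bar P/2)$, not $\gamma(\bar P)$.
\end{itemize}
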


\begin{proof}
The identity
$\rho\phi_{\alpha_0}^t=\phi_{\alpha_0}^{-t}\rho$ implies
that $\rho$ commutes with $\delta$ and hence descends to
$\bar\rho$ on $\bar Y$. On each fiber over $L$,
$\rho$ reverses the circle action and has two fixed points
exchanged by $\delta$.
Thus $\bar\Lambda$ projects diffeomorphically onto $L$, and
$q^{-1}(\bar\Lambda)=\Lambda$.
The fixed set $\Lambda$ is horizontal because
$\rho^*\alpha_0=-\alpha_0$, so its image $\bar\Lambda$ is
Legendrian.
The submanifold $\bar\Lambda$ lies in $\Fix(\bar\rho)$.
It is closed and has the same dimension as $\Fix(\bar\rho)$,
so it is a union of its connected components.

The form $\alpha_0$ descends to a contact form on $\bar Y$
whose fiber orbits have minimal period $1/2$.
Multiplying this descended form by $2$ gives the normalized
connection contact form $\bar\alpha_0$ for the prequantization over
$(B,2\omega)$.
The assumption $f\circ\delta=f$ gives the descended contact
form $\bar\alpha$ in \eqref{eq:real-quotient-forms}.
The identity $\rho^*\alpha=-\alpha$ then implies
$\bar\rho^*\bar\alpha=-\bar\alpha$.

Since $q$ is a finite covering and intertwines the Reeb flows,
each connected component of $q^{-1}(\bar\gamma)$ is the image
of a prime closed Reeb orbit.
Each such orbit meets $\Lambda$ and is therefore $\rho$-brake.
Because $q$ is two-to-one and
$q^{-1}(\bar\Lambda)=\Lambda$, the set
$q^{-1}(\bar\gamma)\cap\Lambda$ contains exactly
$2s_{\bar\gamma}$ points.
By \eqref{eq:two-brake-markings}, each lifted prime Reeb orbit
contains exactly two points of $\Lambda$.
Hence the number of geometrically distinct lifted
$\rho$-brake Reeb orbits is $s_{\bar\gamma}$.
\end{proof}

If $L$ is positively monotone with constant $\lambda_L$ for
$\omega$, its monotonicity constant for $2\omega$ is $2\lambda_L$.
The minimal Maslov number and wideness are unchanged
by this rescaling.

Since the half-disk is simply connected, every relative capping
in $(\bar Y,\bar\Lambda)$ lifts to $Y$ once a lift of the chord's
initial point is chosen. The identity
$q^{-1}(\bar\Lambda)=\Lambda$ ensures that this lift is a relative
capping in $(Y,\Lambda)$. Conversely, relative cappings in
$(Y,\Lambda)$ project to relative cappings in
$(\bar Y,\bar\Lambda)$. If $L$ is positively monotone,
Lemma~\ref{lem:real-two-boundary-indices} shows that the two
Maslov indices agree for a chord and its lift.
Their Floer degrees therefore agree as well.

Suppose that every prime $\rho$-brake Reeb orbit in $Y$
carrying a positive relatively contractible chord of
$\Lambda$ is strongly nondegenerate.
Let $\bar\gamma$ be a prime closed Reeb orbit in $\bar Y$
carrying a positive relatively contractible chord of
$\bar\Lambda$. Choose a prime Reeb orbit in $Y$ covering
$\bar\gamma$, and let $e\in\{1,2\}$ be its covering degree.
By the preceding paragraph, the chosen orbit carries a positive
relatively contractible chord of $\Lambda$. It is therefore
a $\rho$-brake Reeb orbit and is strongly nondegenerate by hypothesis.
Let $P$ be the linearized Poincar\'e return map of $\bar\gamma$.
The linearized Poincar\'e return map of the chosen orbit is
conjugate to $P^e$. If $P^m$ had eigenvalue $1$ for some $m\ge1$, then
$(P^e)^m=(P^m)^e$ would also have eigenvalue $1$, contrary to
the strong nondegeneracy of the chosen orbit.
Thus $\bar\gamma$ is strongly nondegenerate. In particular,
for every such chord $x$, the associated $\bar\rho$-brake Reeb
orbit $x^2$ and all its iterates are nondegenerate.

\begin{lemma}\label{lem:brake-admissible-iterates}
Let $(Y,\alpha,\rho)$ be a real contact manifold, and let $K$
be a union of connected components of $\Fix(\rho)$.
Let $\gamma$ be a prime $\rho$-brake Reeb orbit of period $T$
that carries a positive relatively contractible chord of $K$.
For each $p\in\gamma\cap K$, there exists a shortest positive
relatively contractible chord $x_p$ of $K$ starting at $p$.
The positive relatively contractible chords of $K$ starting
at $p$ are precisely the iterates of $x_p$.
\end{lemma}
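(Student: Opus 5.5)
We work along $\gamma$ and use the marking \eqref{eq:two-brake-markings} to describe all chords of $K$ that start at $p$. Parametrize $\gamma$ so that $\gamma(0)=p$ and $\gamma(-t)=\rho(\gamma(t))$. This is possible because $p\in\Fix(\rho)$ and $\gamma$ is $\rho$-brake; we can translate time so the symmetry is centered at $p$, as in the discussion preceding \eqref{eq:two-brake-markings}. Every positive chord of $K$ starting at $p$ is the restriction $\gamma|_{[0,S]}$ for some $S>0$ with $\gamma(S)\in K\subset\Fix(\rho)$. Since $\gamma$ is prime of period $T$, \eqref{eq:two-brake-markings} shows $\gamma(S)\in\Fix(\rho)$ iff $S\in\frac T2\Z$. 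Moreover $\gamma(T/2)$ lies in $\Fix(\rho)$, but it may lie in a different component, outside $K$. So the positive chords of $K$ from $p$ are exactly the restrictions to $[0,kT/2]$ over those $k\ge1$ with $\gamma(kT/2)\in K$. Since $\gamma(kT/2)$ equals $p$ for even $k$ and $\gamma(T/2)$ for odd $k$, the admissible set is either all $k\ge1$ or only the even $k$.

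Next we decide which of these chords are relatively contractible. Write $y=\gamma|_{[0,T/2]}$. The restriction to $[0,kT/2]$ is the concatenation $y\ast\rho(\bar y)\ast y\ast\cdots$, where $\rho(\bar y)(t)=\rho(y(T/2-t))$ is the reflected chord. By construction this is the $k$th iterate $y^k$ in the sense of Section~\ref{subsec:geometric-conventions}, or, when $\gamma(T/2)\notin K$, the iterate $(y^2)^{k/2}$ as a chord of $K$ with both endpoints at $p$. Relative contractibility in $(Y,K)$ is detected by the class in $\pi_1(Y,K)$. We use two facts. First, $\rho$ maps a relative null-homotopy of a chord to one of its reflection. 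Second, concatenating relatively contractible chords with matching endpoints gives a relatively contractible chord, and the same holds for the reversal. Suppose $\gamma(T/2)\in K$ and $y$ is relatively contractible. Then every $y^k$ is relatively contractible, and $x_p=y$. Suppose instead $y$ is not relatively contractible, or $\gamma(T/2)\notin K$. Then we consider the chords of length $kT/2$ with $k$ even, namely $\gamma|_{[0,mT]}$, and their classes.

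The main obstacle is showing that the set $\{k:\ \gamma|_{[0,kT/2]}\text{ is a relatively contractible chord of }K\}$ is exactly the set of multiples of its least element $k_0$. It is nonempty by the hypothesis that $\gamma$ carries such a chord. Some chord may start at the other point of $\gamma\cap K$, but reflection and the time symmetry transport it to $p$. To prove the multiples claim we assign to each admissible $k$ the class $c_k$ of $\gamma|_{[0,kT/2]}$ in the set of relative homotopy classes. We show $c_{k+l}$ is determined by concatenation: for $k$ admissible, $\gamma|_{[kT/2,(k+l)T/2]}$ is a time-translate of $\gamma|_{[0,lT/2]}$ (when $k$ is even), or its $\rho$-reflection (when $k$ is odd). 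Hence if $c_k$ is trivial, then $c_{k+l}$ is trivial iff $c_l$ is trivial. This additivity applies to relatively contractible initial segments, and it gives closure under addition and subtraction. Subtraction works because a contractible prefix can be homotoped into $K$ and cancelled. The admissible set is therefore closed under differences, so it is $k_0\Z_{>0}$. Then $x_p=\gamma|_{[0,k_0T/2]}$ is the shortest chord, and the chord of length $mk_0T/2$ is its $m$th iterate by reflection and concatenation, since the endpoint of each block lies in $\Fix(\rho)$ and the brake symmetry identifies the next block with the reflected one. Conversely, each iterate of $x_p$ is a positive relatively contractible chord by Lemma~\ref{lem:brake-capped-iteration}.
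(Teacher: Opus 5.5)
Your route is different from the paper's and can be made to work, but one case of your key additivity step is wrong as written.

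\textbf{Comparison with the paper.} Write $x_k=\gamma|_{[0,kT/2]}$ and $p'=\gamma(T/2)$; note $p'\neq p$ because $T$ is the least period. The paper uses the order $r$ of $[\gamma]$ in $\pi_1(Y,p)$. This order is finite because reflecting a relatively contractible chord makes some iterate of $\gamma$ contractible. Reflecting a capping of $x_k$ caps $\gamma^k$, so $r\mid k$ for every admissible $k$. Moreover $x_{2r}=\gamma^r$ is a contractible loop at $p$, and $x_{k+2r}\simeq x_k$ rel endpoints. Hence the admissible set is $r\mathbb N$ or $2r\mathbb N$, according to whether $x_r$ is relatively contractible. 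This gives existence at $p$ for free, even when the given chord starts at $p'$. You instead show that the admissible set $S\subset\mathbb Z_{>0}$ is closed under sums and positive differences. That argument is purely combinatorial and needs no $\pi_1$-orders, but it forces a parity case analysis.

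\textbf{The gap.} You claim that for odd $k$ the segment $\gamma|_{[kT/2,(k+l)T/2]}$ is a $\rho$-reflection of $\gamma|_{[0,lT/2]}$. This is false when $l$ is even. In that case the segment is the loop $\gamma^{l/2}$ based at $p'$, while every translate or reflection of $x_l$ is a loop based at $p$. Relative contractibility of the two loops is not directly comparable, since they are only conjugate by $\gamma|_{[0,T/2]}$. The conclusion you need still holds if you reorder the concatenation. Because $l$ is even, $x_{k+l}=x_l*\gamma|_{[lT/2,(k+l)T/2]}$, and the second factor is a time-translate of $x_k$. So put the even block first. When $k$ and $l$ are both odd your claim is correct: $\gamma|_{[kT/2,(k+l)T/2]}=\rho\circ\bar x_l$, where $\bar x_l$ is the reversal of $x_l$. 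With this fix, closure under sums and differences holds in all parity cases, and $S=k_0\mathbb Z_{>0}$ follows.

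\textbf{Smaller points.}
\begin{enumerate}
\item Nonemptiness at $p$: a relatively contractible chord $\gamma|_{[T/2,(k+1)T/2]}$ with $k$ even is not moved to $p$ by time symmetry. Instead, reflecting its capping makes $\gamma^k$ contractible, so $\gamma|_{[0,kT]}$ is a relatively contractible chord at $p$. For odd $k$, reflection does give $x_k$.
\item Your second paragraph's case split is misleading. An odd $k>1$ can be admissible even when $\gamma|_{[0,T/2]}$ is not relatively contractible; your third paragraph supersedes that split anyway.
\item Citing Lemma~\ref{lem:brake-capped-iteration} at the end goes outside its hypotheses (monotonicity and nondegeneracy). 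It is also unnecessary, since $S=k_0\mathbb Z_{>0}$ already shows that every iterate of $x_p$ is relatively contractible.
\end{enumerate}
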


\begin{proof}
\leavevmode The existence of a positive relatively contractible
chord of $K$ implies that some iterate of $\gamma$ is
contractible, by the reflection argument in
Section~\ref{subsec:geometric-conventions}. Fix
$p\in\gamma\cap K$ and regard $\gamma$ as a $T$-periodic map
$\R\to Y$ with $\gamma(0)=p$. The class
$[\gamma]\in\pi_1(Y,p)$ therefore has finite order $r\ge1$;
this order is independent of the base point on $\gamma$.

For $k\ge1$, put $x_k=\gamma|_{[0,kT/2]}$.
{By \eqref{eq:two-brake-markings}, the positive chords of
$K$ starting at $p$ are precisely those paths $x_k$ for
which $\gamma(kT/2)\in K$.}
Whenever $x_k$ is a relatively contractible chord of $K$,
reflection of a relative capping gives a capping of $\gamma^k$.
Hence $r$ divides every such $k$.

The path $x_{2r}=\gamma^r$ is a contractible loop based at $p$.
The path $x_{k+2r}$ is obtained from $x_k$ by following the orbit
for an additional time $rT$. This additional segment is a
contractible loop, so $x_{k+2r}$ is homotopic to $x_k$ with
endpoints fixed.
Every $x_{2\ell r}$, $\ell\ge1$, is therefore a relatively
contractible chord of $K$.
If $k$ is an odd multiple of $r$, then $x_k$ has the same
endpoint as $x_r$ and is homotopic to it with endpoints fixed.

If $x_r$ is a relatively contractible chord of $K$,
then $x_k$ is such a chord exactly when $k\in r\mathbb N$.
In this case, the shortest chord is $x_p=x_r$.
Otherwise, $x_k$ is a relatively contractible chord of $K$
exactly when $k\in2r\mathbb N$, and the shortest chord is
$x_p=x_{2r}$.
Reflection at the endpoints of $x_p$ continues it along the
same Reeb trajectory, so its iterates are exactly the
chords just listed.
\end{proof}

Every negative chord is the reversal of a positive one,
and reversal preserves relative contractibility.
The preceding lemma therefore shows that every nonconstant
relatively contractible generalized chord of $K$ has
the form $x_p^m$ or $(x_p^m)^{-1}$, with $m\ge1$.
Although $x_p$ is the shortest positive relatively contractible
chord starting at $p$, the associated $\rho$-brake Reeb
orbit $x_p^2$ need not be the shortest contractible iterate of the
underlying prime orbit.

Let $\bar\gamma$ be a prime orbit in $\bar Y$
carrying a positive relatively contractible chord of
$\bar\Lambda$.
{By Lemma~\ref{lem:brake-admissible-iterates}, the positive
relatively contractible chords on $\bar\gamma$ are precisely
$x_p^m$, where $p\in\bar\gamma\cap\bar\Lambda$ and $m\ge1$.
By Lemma~\ref{lem:real-half-period-quotient}, the number
$s_{\bar\gamma}$ of these initial points equals the number
of geometrically distinct $\rho$-brake Reeb orbits in
$q^{-1}(\bar\gamma)$.}

\subsection{Comparison with quantum homology}
\label{subsec:relative-filling-comparison}

\begin{lemma}\label{lem:relative-filling-comparison}
In the setting of Section~\ref{subsec:rfh-comparison},
suppose that $\dim L=n\ge1$ and $N_L=2$.
Let $W$ be a Liouville filling of $Y$, and let
$C\subset W$ be an exact Lagrangian filling of $\Lambda$.
Assume that
\begin{equation}\label{ass:relative-injectivity}
 \pi_1(Y,\Lambda)\longrightarrow\pi_1(W,C)
 \quad\text{is injective},
\end{equation}
and that $\mu_C|_{\pi_2(W,C)}=0$.
Then there is a degree-preserving isomorphism
\[
 \RFH_*(\Lambda;\F)\cong\QH_*(L;\F),
\]
where RFH is defined using $(W,C)$.
The grading on RFH agrees with that determined by
relative cappings in $(Y,\Lambda)$.
\end{lemma}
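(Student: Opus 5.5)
The plan is to adapt the proof of \cite[Theorem~1.1]{BKK} to the filled setting, using $(W,C)$ in place of the symplectization. We then use the two topological hypotheses to rule out bubbling and grading ambiguities, which is exactly what $N_L\ge3$ secures in \cite{BKK}.

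\emph{Step 1 (grading and action).} The assumption $\mu_C|_{\pi_2(W,C)}=0$, together with exactness of $C$, makes the Floer degree of a generator well defined by cappings in $(W,C)$. Injectivity \eqref{ass:relative-injectivity} implies that a chord which is relatively contractible in $(W,C)$ is already relatively contractible in $(Y,\Lambda)$. Any two cappings then differ by a class in $\pi_2(W,C)$, on which $\mu_C$ vanishes. Comparing a capping in $(Y,\Lambda)$ with its image in $(W,C)$, and using that the relative Maslov class of $(\xi,T\Lambda)$ vanishes on $\pi_2(Y,\Lambda)$ (Section~\ref{subsec:geometric-conventions}), we conclude that the two gradings coincide. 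The same argument shows that the generators are exactly the relatively contractible generalized chords of $\Lambda$ in $Y$, together with the type~III chords, so the chain-level description of Section~\ref{subsec:rfh-comparison} applies. Independence of the pair $(W,C)$ then follows from the invariance of RFH under the choice of filling data in a fixed action window, via continuation maps.

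\emph{Step 2 (compactness).} Exactness of $W$ and $C$ excludes sphere and disk bubbling in the interior, so the $N_L\ge3$ hypothesis of \cite{BKK}, which was used there against disk bubbles of Maslov index $2$ on $\R\times\Lambda$, is no longer needed. The remaining compactness input is the maximum principle for the $\vee$-shaped Hamiltonians near the convex end, which is standard.

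\emph{Step 3 (computation).} Deform $\alpha$ to $\kappa\alpha_0$ through a Liouville cobordism carried inside the completion of $W$. Using the Morse--Bott families from \eqref{eq:mbdegree}, build the spectral sequence filtered by winding $k$. Its first page is $\bigoplus_k H_{*-kN_L}(L;\F)$, which matches the pearl complex of $L$ filtered by powers of $t$. Following \cite[Section~5]{BKK}, identify the higher differentials with the counts of holomorphic disks in $B$. Do this by stretching the neck along $Y$: a rigid strip in the symplectization splits into pieces, and the pieces that lie in $\R\times Y$ project to Maslov-$N_L$ disks on $L$.

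\emph{Main obstacle.} When $N_L=2$, some strip configurations may enter the filling rather than project to disks in $B$. I would use injectivity and $\mu_C=0$ to show that any component in $W$ carries zero Maslov index and positive energy. Such components therefore cannot appear in rigid index-one configurations, and the comparison of \cite{BKK} goes through unchanged. Establishing this index and energy bookkeeping precisely is the step I expect to require the most care.
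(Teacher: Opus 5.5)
Your architecture matches the paper's. Exactness of $(W,C)$ gives a well-defined theory, and $\mu_C|_{\pi_2(W,C)}=0$ together with injectivity identifies generators and gradings with those from relative cappings in $(Y,\Lambda)$. Neck-stretching along $Y$ then reduces to the symplectization, where \cite{BKK} identifies strips with pearls. The gap is the one step where $N_L=2$ and injectivity really matter: showing that, for a long neck, no strip counted by the differential or a continuation map enters the filling. Your proposed mechanism, that components in $W$ carry zero Maslov index and positive energy and so cannot appear in rigid configurations, does not establish this. The vanishing of $\mu_C$ on $\pi_2(W,C)$ only makes the grading well defined; it says nothing about whether bottom-level components occur. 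The asymptotics of those components have index $\mu_{\RS}(x_k)=kN_L>0$, and positive energy excludes nothing. Your Step~2 also misplaces the role of $N_L\ge3$ in \cite{BKK}. There $\R\times\Lambda$ is already exact, so there is no disk bubbling; the issue is escape through the negative end, which is exactly what reappears here as escape into $W$. Exactness of $(W,C)$ gives $\partial^2=0$, not the comparison.

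What is needed is an index count on the top level of the broken configuration. Each negative puncture is asymptotic to a fiber orbit contractible in $W$, or to a fiber chord relatively contractible in $(W,C)$; this follows by cutting the approximating strips along a small circle or arc. Slide an orbit through fiber orbits to a fiber over $L$ with base point on $\Lambda$ and apply injectivity. Each asymptotic is then relatively contractible in $(Y,\Lambda)$, so its winding number $k_h$ lies in $a_L\Z_{>0}$ and $\nu_h=k_h/\lambda_L\ge N_L=2$. This lower bound is exactly what the injectivity hypothesis buys. Stokes' theorem and monotonicity give $d=\ind_g(q)-\ind_g(p)+\mu_L(\mathbf A)+\sum_h\nu_h$, with $d\in\{0,1\}$ and $\mu_L(\mathbf A)\ge0$. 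For $n\le2$ this forces $\mu_L(\mathbf A)=0$, hence a pure gradient chain and $d\ge2$, a contradiction. For $n\ge3$ you must pass to an underlying simple chain of pearls, since transversality is available only there, so a rigidity heuristic is not enough. Bounding the number of constant disks by the number $r$ of negative punctures ($r+1$ for continuation maps) gives $1\ge2$, respectively $0\ge1$. With this in place the winding spectral sequence is unnecessary. The complexes and continuation maps coincide with those of \cite{BKK}, continuation sends $p_k$ to $p_k$, the limits stabilize degreewise, and continuation in the completion of $W$ passes from $\alpha_0$ to $f\alpha_0$. Finally, your claim that different pairs $(W,C)$ are related by continuation maps is unjustified. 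It is not needed, since independence of $(W,C)$ follows a posteriori from the isomorphism with $\QH_*(L;\F)$.
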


\begin{proof}
\leavevmode We adapt the neck-stretching and comparison
arguments in \cite[Propositions~5.1(a) and~5.2(a)]{BKK}, replacing
the assumption that $\pi_1(Y)\to\pi_1(W)$ is injective with
\eqref{ass:relative-injectivity}.

Since $W$ and $C$ are exact, the Floer construction in the
completion of $W$ gives the filtered complexes and continuation
maps described in Section~\ref{subsec:rfh-comparison},
with $\partial^2=0$.
The assumption $\mu_C|_{\pi_2(W,C)}=0$ makes the integer
grading independent of the relative capping.
It also implies $c_1(TW)|_{\pi_2(W)}=0$, since the Maslov
index of a relative class induced by a sphere is twice
its first Chern number.
By \eqref{ass:relative-injectivity}, a chord in $Y$ with
endpoints on $\Lambda$ is relatively contractible in $(W,C)$
if and only if it is relatively contractible in $(Y,\Lambda)$.
{In each fixed action window, the symplectization and filling
constructions therefore have the same generators and the same
grading, computed using relative cappings in $(Y,\Lambda)$.}

We first consider $\alpha_0$.
{Fix a finite action window and finitely many admissible
Hamiltonians from Section~\ref{subsec:rfh-comparison}, with
monotone homotopies between them as in
\cite[Section~5.1]{BKK}.
Choose regular Morse data and an almost complex structure
on the symplectization as in
\cite[Propositions~3.11 and~3.12(a)]{BKK}, and keep both fixed
during the Hamiltonian homotopies.}
Extend the almost complex structure to the completion of $W$
and stretch a neck along a copy of $Y$ below the chosen
Hamiltonian chords, where the Hamiltonians are constant.
We claim that, for a sufficiently long neck, the strips
counted by the differential and continuation maps remain
above this hypersurface.

{Suppose instead that, for arbitrarily long necks, a strip
counted by the differential or a continuation map crosses
the chosen copy of $Y$.
A neck-stretching limit then has at least one negative
puncture on its top level.}
The negative punctures are asymptotic to positive Reeb
orbits or chords along the circle fibers.
For an interior puncture, a small circle around it corresponds
to a circle in the domain of an approximating strip.
This circle bounds a disk, and its image is freely homotopic
in $W$ to the asymptotic orbit.
Hence the orbit is contractible in $W$.
For a boundary puncture, cut the approximating strip along
the corresponding arc and cap the asymptotic generators on
one side of the cut.
The resulting disk has boundary on $C$ except along this arc,
which is homotopic to the asymptotic chord with endpoints
on $C$.
Thus the chord is relatively contractible in $(W,C)$.

Move each asymptotic orbit at an interior puncture through
fiber orbits to a fiber over $L$, with base point in $\Lambda$.
As a chord of $\Lambda$, it is trivial in $\pi_1(W,C)$
and therefore in $\pi_1(Y,\Lambda)$ by
\eqref{ass:relative-injectivity}.
The same injectivity assumption applies to the asymptotic
chords at boundary punctures.
Let $r\ge1$ be the total number of negative punctures.
Since $\pi|_\Lambda:\Lambda\to L$ is a diffeomorphism,
the asymptotic fiber orbits and chords have positive integer
winding numbers $k_h$, $1\le h\le r$.
Their relative contractibility gives $k_h\in a_L\Z$ by
\cite[Proposition~3.4]{BKK}.
Set
\[
 \nu_h:=\frac{k_h}{\lambda_L}
       =\frac{k_h}{a_L}N_L\ge2.
\]

Let $p_k$ and $q_l$ be the input and output generators,
with $p,q\in\Crit(g)$ and $k,l\in\Z$, and put
$d=\mu(p_k)-\mu(q_l)$.
Thus $d=1$ for the differential and $d=0$ for continuation maps.
Let $\mathbf A=(A_1,\ldots,A_N)$ be the disk classes obtained
by projecting the top-level configuration to $B$, and write
\[
 \mu_L(\mathbf A)=\sum_{j=1}^N\mu_L(A_j),\qquad
 \omega(\mathbf A)=\sum_{j=1}^N\omega(A_j).
\]
The input and output have fiber winding numbers $ka_L$ and
$la_L$, respectively.
Stokes' theorem gives
\[
 \omega(\mathbf A)=a_L(k-l)-\sum_{h=1}^r k_h.
\]
Dividing by $\lambda_L$ and using
\eqref{eq:monotone} and \eqref{eq:mbdegree}, we obtain,
as in \cite[(3.32)]{BKK},
\begin{equation}\label{eq:relative-filling-index-balance}
 d=\ind_g(q)-\ind_g(p)+\mu_L(\mathbf A)
          +\sum_{h=1}^r\nu_h.
\end{equation}
This computation does not require the asymptotic orbits at
interior punctures to be contractible in $Y$.
Since all projected disks have nonnegative area,
$\mu_L(\mathbf A)\ge0$.

If $n\le2$, then
\[
 0\le\mu_L(\mathbf A)
   \le d+n-\sum_{h=1}^r\nu_h\le1.
\]
Since $N_L=2$, this implies $\mu_L(\mathbf A)=0$.
All projected disks are therefore constant, and the negative
gradient trajectories of $g$ form a chain from $q$ to $p$.
The Morse--Smale condition implies
$\ind_g(q)\ge\ind_g(p)$.
Hence \eqref{eq:relative-filling-index-balance} gives
$d\ge\sum_{h=1}^r\nu_h\ge2$, a contradiction.

Suppose that $n\ge3$.
As in the proof of \cite[Proposition~3.12(b)]{BKK},
take an underlying simple chain of pearls and denote its
disk classes by $\mathbf A^*$.
Its total area does not exceed that of $\mathbf A$, so
\[
 \mu_L(\mathbf A)-\mu_L(\mathbf A^*)\ge0.
\]
Let $N^*$ be the number of disks in this chain and $N_0^*$
the number of constant disks.
The automorphisms fixing the two boundary marked points
act freely on each nonconstant simple disk.
Reparametrizing these disks independently therefore gives
a free $\R^{N^*-N_0^*}$-action on the moduli space of
parametrized simple chains.
The dimension formula in
\cite[Proposition~2.4(a) and the proof of Proposition~3.12(b)]{BKK}
gives
\[
 \ind_g(q)-\ind_g(p)+\mu_L(\mathbf A^*)+N^*-1
       \ge N^*-N_0^*.
\]
Combining this with
\eqref{eq:relative-filling-index-balance}, we obtain
\[
 d\ge1-N_0^*+\mu_L(\mathbf A)-\mu_L(\mathbf A^*)
              +\sum_{h=1}^r\nu_h.
\]

For the differential, each constant disk retained in the
simple chain comes from a distinct Floer component with
constant projection.
If such a component has no negative punctures, its asymptotic
chords have the same fiber winding number.
Since they lie on the strictly convex part of the same
radial Hamiltonian, they have the same action.
The energy identity then makes the component a trivial strip,
which is omitted from the cascade.
Thus $N_0^*\le r$.
Using $d=1$, we obtain, as in \cite[(3.34)]{BKK},
\[
 1\ge1+\mu_L(\mathbf A)-\mu_L(\mathbf A^*)
          +\sum_{h=1}^r(\nu_h-1)\ge2,
\]
a contradiction.

For continuation, the continuation strip may have constant
projection without negative punctures.
Every other component with constant projection has at least
one negative puncture, so $N_0^*\le r+1$.
Using $d=0$, we obtain, as in the proof of
\cite[Proposition~3.18(a)]{BKK},
\[
 0\ge\mu_L(\mathbf A)-\mu_L(\mathbf A^*)
          +\sum_{h=1}^r(\nu_h-1)\ge1,
\]
again a contradiction.

For any fixed finite collection of action windows,
{admissible Hamiltonians, and monotone homotopies with
fixed Morse data and almost complex structure}, the
differentials and continuation maps in the completion of $W$
therefore agree with those in the symplectization once the
neck is sufficiently long.
The projection and lifting argument in
\cite[Section~4.3]{BKK} identifies the Floer differential
for $\alpha_0$ with the pearl differential of $L$ on the
corresponding generators.
By \cite[Proposition~3.18(b)]{BKK}, the continuation map
between two of the chosen Hamiltonians $H$ and $H'$
sends the generator $p_k$ for $H$ to the generator $p_k$
for $H'$.
These identifications commute with the inclusion and quotient
maps associated with the action filtration.
For a fixed Hamiltonian, continuation between the almost
complex structures corresponding to different neck lengths
induces isomorphisms on filtered homology.

{For each $j$, \eqref{eq:mbdegree} gives only finitely many
generators in degrees $j-1,j,j+1$.
Once the action window contains all these generators,
further enlargement does not change the degree-$j$ homology.
The continuation maps send $p_k$ to $p_k$, so the limit
over Hamiltonians also stabilizes in degree $j$.}
Passing to these limits gives \eqref{eq:rfhqh} for
$\alpha_0$, as in \cite[Section~3.4.2]{BKK}.

For $\alpha=f\alpha_0$, realize $(Y,\alpha)$ as a graph in
the completion of $W$, after rescaling the Liouville form
if necessary.
The cylindrical completion of $C$ meets this graph in the
corresponding copy of $\Lambda$.
Continuation in the completion of $W$ gives
\eqref{eq:rfhqh} for $\alpha$, as in
\cite[Section~5.1]{BKK}.
\end{proof}

\subsection{Morse inequalities}
\label{subsec:morse-window-v2}

We work in the setting of Section~\ref{subsec:rfh-comparison}.
For $j\in\Z$, define the $j$th Betti number of RFH by
\begin{equation}\label{eq:rfh-betti-numbers}
 b_j^{\RFH}:=\dim_\F\RFH_j(\Lambda;\F).
\end{equation}

Suppose that every nonconstant relatively contractible
generalized chord of $\Lambda$ is isolated.
Define the \emph{Morse-type numbers} by
\begin{equation}\label{eq:rfh-morse-numbers}
 M_j:=\sum_x\dim_\F\mathrm{HF}_j^{\mathrm{loc}}(x;\F)
       +\dim_\F\mathrm{HF}_j^{\mathrm{loc}}(\Lambda;\F),
 \qquad j\in\Z,
\end{equation}
where the sum runs over all such generalized chords,
including both positive and negative chords.
The contribution of the family of constant chords is $b_j(L)$
by
\eqref{eq:local-floer-constant}.
The chords in this sum may be degenerate, and $M_j$ may
be infinite.
This definition is analogous to the one using local
equivariant symplectic homology in
\cite[Section~3.5]{GGM}.

If all these chords are nondegenerate,
\eqref{eq:local-floer-nondegenerate} gives
\begin{equation}\label{eq:rfh-morse-counts}
 M_j=\#\{x:\mu(x)=j\}+b_j(L),
\end{equation}
where $x$ ranges over the same generalized chords as in
\eqref{eq:rfh-morse-numbers}.

\begin{proposition}[Morse inequalities]
\label{prop:rfh-morse-inequalities}
Suppose that $(Y,\alpha,\rho)$ is a real contact manifold and
that $\Lambda$ is a union of connected components of $\Fix(\rho)$.
Assume that only finitely many geometrically distinct
$\rho$-brake Reeb orbits carry a positive relatively contractible
chord of $\Lambda$, and that their prime representatives
are strongly nondegenerate.
Let $x_1,\ldots,x_s$ be the shortest positive relatively
contractible chords of $\Lambda$, one for each initial
point admitting such a chord.
If $\widehat\mu(x_i)\ne0$ for $1\le i\le s$, then
\begin{equation}\label{eq:local-morse-inequalities}
 b_j^{\RFH}\le M_j<\infty
 \qquad(j\in\Z).
\end{equation}
\end{proposition}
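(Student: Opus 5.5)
We prove the two inequalities in \eqref{eq:local-morse-inequalities} separately: finiteness of $M_j$ from index growth, and $b_j^{\RFH}\le M_j$ from the filtered complexes of Section~\ref{subsec:rfh-comparison}.

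\emph{Step 1: finiteness of $M_j$.} By Lemma~\ref{lem:brake-admissible-iterates} and the remark following it, every nonconstant relatively contractible generalized chord of $\Lambda$ has the form $x_i^m$ or $(x_i^m)^{-1}$ with $1\le i\le s$ and $m\ge1$. Since the prime brake orbits are strongly nondegenerate, every such chord is nondegenerate, so \eqref{eq:rfh-morse-counts} applies. Consequently
\[
 M_j=b_j(L)+\#\{(i,m):\mu(x_i^m)=j\}+\#\{(i,m):\mu((x_i^m)^{-1})=j\}.
\]
By \eqref{eq:reversal}, $\mu((x_i^m)^{-1})=n-\mu(x_i^m)$. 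It therefore suffices to show that, for each $i$, only finitely many $m$ give a fixed value of $\mu(x_i^m)$. Since $\mu(x_i^m)/m\to\widehat\mu(x_i)\ne0$, we have $|\mu(x_i^m)|\to\infty$, and this gives the finiteness. Equivalently, one can use the signed growth estimate \eqref{eq:brake-signed-index-growth} along subsequences.

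\emph{Step 2: localize to a finite window.} Fix $j$. By Step~1, the generators in degrees $j-1$, $j$, $j+1$ are finite in number. By Lemma~\ref{lem:finiteaction} and \eqref{eq:reeb-action-spectrum}, we choose cutoffs $A<0<B$ outside the spectrum so that $(A,B)$ contains all chords of degree $j-1$, $j$, $j+1$.

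We next show that the degree-$j$ part of $\RFH_j^{(a,b)}$ stabilizes for $a\le A$ and $b\ge B$ and equals $\RFH_j$. Enlarging the window only adds generators in degrees other than $j-1$, $j$, $j+1$. The long exact sequences for the action filtration then show that the degree-$j$ homology is unchanged, so both the inverse limit and the direct limit in \eqref{eq:limits} stabilize. Hence $b_j^{\RFH}=\dim\RFH_j^{(A,B)}$.

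\emph{Step 3: compare with the finite complex.} For small $\eta$, the finite complex $\mathrm{CF}_*^{(A,B)}(H_\eta)$ computes $\RFH^{(A,B)}_*$. Its degree-$j$ generators are:
\begin{itemize}
\item one generator for each nonconstant chord of degree $j$;
\item the critical points of $g$ of the appropriate index.
\end{itemize}
Rather than counting these directly, we filter by action. Choose cutoffs separating the finitely many action values in $(A,B)$, with one small window $(-\varepsilon,\varepsilon)$ around $0$. The long exact sequences of the filtration, formed with a single Hamiltonian as explained above, give subadditivity: $\dim H_j$ of the whole complex is at most the sum of $\dim H_j$ over the slices. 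This follows by induction using $\dim H_j(C)\le\dim H_j(C')+\dim H_j(C/C')$. Each nonconstant slice contains nondegenerate generators, and chords with equal action but distinct images can be separated or treated together. Its degree-$j$ homology therefore has dimension at most the number of chords of degree $j$ in it. The constant slice computes $\mathrm{HF}^{\mathrm{loc}}_*(\Lambda)$, of dimension $b_j(L)$, by the isolation argument already given. Summing gives $b_j^{\RFH}\le M_j$.

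\emph{Main obstacle.} The main obstacle is Step~2. There one must justify that the stabilization passes correctly through both limits in \eqref{eq:limits}, and that the continuation isomorphisms respect the filtration slices. I would handle this using the exactness of the filtered long exact sequences, which commute with continuation maps as noted from \cite[Section~8.3]{CO}, together with the degree finiteness from Step~1.
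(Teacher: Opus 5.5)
Your proposal is correct and follows the paper's proof essentially step for step. It proves finiteness of $M_j$ from $|\mu(x_i^m)|/m\to|\widehat\mu(x_i)|>0$ together with the reversal formula, obtains stabilization of both limits in \eqref{eq:limits} from the filtration long exact sequences once $(A,B)$ contains all generators in degrees $j-1,j,j+1$, and concludes by subadditivity over action slices, with the constant slice contributing $b_j(L)$. The only cosmetic difference is that you cut $(A,B)$ into one slice per action value, whereas the paper uses just the three slices $(A,-\varepsilon)$, $(-\varepsilon,\varepsilon)$, $(\varepsilon,B)$; this suffices because the nonconstant slices are bounded by their chain dimensions in either case.
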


\begin{proof}
By reflection and Lemma~\ref{lem:brake-admissible-iterates},
the nonconstant relatively contractible generalized chords are
$x_i^m$ and $(x_i^m)^{-1}$, with $1\le i\le s$ and $m\ge1$.
There are finitely many such families, since there are
finitely many prime orbits in the statement and each meets
$\Lambda$ in at most two points.
Strong nondegeneracy and
Lemma~\ref{lem:brake-capped-iteration} imply that all positive
chords are nondegenerate, and so are their reversals.
Since
\[
 \lim_{m\to\infty}\frac{|\mu(x_i^m)|}{m}
 =|\widehat\mu(x_i)|>0,
\]
only finitely many positive chords have degree in any fixed
bounded interval.
The same holds for negative chords by \eqref{eq:reversal}.
Thus \eqref{eq:rfh-morse-counts} gives $M_j<\infty$ for every $j$.

Fix integers $a\le b$.
Choose $A<0<B$ with
$A,B\notin\operatorname{Spec}(\alpha,\Lambda)$
so that every nonconstant relatively contractible generalized
chord in degrees $a-1,\ldots,b+1$ has $T\in(A,B)$.
Enlarging this action interval adds no generators in these
degrees.
The long exact sequences associated with the action
filtration therefore show that both limits in \eqref{eq:limits}
stabilize in degrees $a,\ldots,b$.
Choose a finite-dimensional Floer complex
$\mathrm{CF}_*^{(A,B)}$ computing $\RFH_*^{(A,B)}$, as in
Section~\ref{subsec:rfh-comparison}.
Then
\[
 H_j(\mathrm{CF}_*^{(A,B)})\cong\RFH_j(\Lambda;\F)
 \qquad(a\le j\le b).
\]
Here the chosen Hamiltonian is suppressed from the notation.

Choose the Morse perturbation sufficiently small that an
action interval $(-\varepsilon,\varepsilon)\subset(A,B)$
contains exactly the generators arising from the constant chords.
{The quotient complex for $(-\varepsilon,\varepsilon)$ computes
$\mathrm{HF}_*^{\mathrm{loc}}(\Lambda;\F)$, whose degree-$j$
dimension is $b_j(L)$ by \eqref{eq:local-floer-constant}.}
The long exact sequences associated with the action
filtration bound the homology dimension of the full complex
by the sum of the homology dimensions of the quotient complexes
for the intervals $(A,-\varepsilon)$, $(-\varepsilon,\varepsilon)$,
and $(\varepsilon,B)$.
The first and third quotient complexes have only nonconstant
generators, and their homology dimensions are bounded by
their chain dimensions.
Consequently,
\[
 b_j^{\RFH}
 \le \#\{x:\mu(x)=j\}+b_j(L)=M_j
 \qquad(a\le j\le b),
\]
where $x$ ranges over all nonconstant relatively contractible
generalized chords.
Since $a$ and $b$ were arbitrary, the result follows.
\end{proof}

For the proof of Theorem~\ref{thm:main-brake-critical}, we also
need a finite-dimensional chain complex whose degree-$j$ chain
group has dimension $M_j$ throughout a prescribed interval
of degrees.
Under the hypotheses of
Proposition~\ref{prop:rfh-morse-inequalities}, fix integers
$a\le b$ and choose $\mathrm{CF}_*^{(A,B)}$ as in the
preceding proof.
Let $\mathrm{CF}_*^0$ be the quotient complex associated with
the interval $(-\varepsilon,\varepsilon)$ chosen in that proof.
Over $\F$, this finite complex splits as a direct sum of
its homology with zero differential and two-term acyclic
complexes.
Make a homogeneous change of basis among the generators of
$\mathrm{CF}_*^{(A,B)}$ arising from the constant chords
to realize this decomposition, and cancel the corresponding
pairs in the full complex.
The change of basis preserves the two action subcomplexes
defining $\mathrm{CF}_*^0$, and the cancellations induce
the same reduction on $\mathrm{CF}_*^0$.

The resulting finite complex $\widetilde C_*$ is chain
homotopy equivalent to $\mathrm{CF}_*^{(A,B)}$.
{The cancellations preserve the number and degrees of the
nonconstant generators and leave exactly $b_j(L)$ generators
arising from the constant chords in degree $j$.}
The latter correspond to a homogeneous basis of
$H_*(\mathrm{CF}_*^0)$, and the component of the differential
between them is zero.
By the choice of $(A,B)$,
\begin{equation}\label{eq:window-full-identification-v2}
 H_j(\widetilde C_*)\cong\RFH_j(\Lambda;\F),\qquad
 \dim_\F\widetilde C_j=M_j
 \qquad(a\le j\le b).
\end{equation}

\subsection{Index estimates}
\label{subsec:mean-index-estimates}
\label{subsec:brake-iteration}

\begin{lemma}\label{lem:rfh-mean-density}
{Assume the hypotheses and notation of
Proposition~\ref{prop:rfh-morse-inequalities}.}
If there exist an integer $J_0$ and a number $\eta\ge0$ such that
\[
 \sum_{j=J_0}^{R}\dim_{\F}\RFH_j(\Lambda;\F)
 =\eta R+O(1)
 \qquad(R\in\Z,\ R\to\infty),
\]
then
\begin{equation}\label{eq:rfh-mean-density}
 \eta\le\sum_{i=1}^{s}\frac1{|\widehat\mu(x_i)|}.
\end{equation}
\end{lemma}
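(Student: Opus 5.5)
Summing the Morse inequalities of Proposition~\ref{prop:rfh-morse-inequalities} over the window $J_0\le j\le R$ reduces the statement to counting chords of degree in $[J_0,R]$:
\[
 \eta R+O(1)=\sum_{j=J_0}^{R}b_j^{\RFH}\le\sum_{j=J_0}^{R}M_j
 =\sum_{j=J_0}^{R}b_j(L)+\#\{x:\ J_0\le\mu(x)\le R\},
\]
where $x$ runs over all nonconstant relatively contractible generalized chords. The first sum on the right is bounded independently of $R$, since $b_j(L)=0$ outside $0\le j\le n$. So it suffices to show
\[
 \#\{x:\ J_0\le\mu(x)\le R\}\le R\sum_{i=1}^s\frac1{|\widehat\mu(x_i)|}+o(R).
\]
Dividing by $R$ and letting $R\to\infty$ then gives \eqref{eq:rfh-mean-density}.

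By Lemma~\ref{lem:brake-admissible-iterates} and reflection, the chords to be counted are exactly $x_i^m$ and $(x_i^m)^{-1}$ for $1\le i\le s$ and $m\ge1$. By \eqref{eq:reversal}, $\mu((x_i^m)^{-1})=n-\mu(x_i^m)$. For each fixed $i$, the definition of the mean index gives
\[
 \mu(x_i^m)=m\,\widehat\mu(x_i)+o(m),
\]
because $\mu(x)=\mu_I(x)+n/2$ and $\widehat\mu(x_i)=\lim_m\mu_I(x_i^m)/m$. Thus $\mu(x_i^m)$ goes to $+\infty$ or $-\infty$ according to the sign $\varepsilon_i$ of $\widehat\mu(x_i)\neq0$, and the reversals go the opposite way. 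Exactly one of the two sequences $\mu(x_i^m)$, $\mu((x_i^m)^{-1})$ tends to $+\infty$. The other has only finitely many terms $\ge J_0$, and these contribute $O(1)$.

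For the sequence tending to $+\infty$, the degree is $|\widehat\mu(x_i)|\,m+o(m)$. Given $\epsilon>0$, for $m\ge m_0(\epsilon)$ this is at least $(|\widehat\mu(x_i)|-\epsilon)m$. Hence the number of $m$ with degree $\le R$ is at most
\[
 m_0(\epsilon)+\frac{R}{|\widehat\mu(x_i)|-\epsilon}.
\]
Summing over the $s$ families, dividing by $R$, letting $R\to\infty$ and then $\epsilon\to0$ gives the claim.

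The one point to check is that the asymptotic $\mu(x_i^m)/m\to\widehat\mu(x_i)$ applies to every $m$, not only to even $m$, where $x_i^m$ is the $(m/2)$th iterate of the brake orbit. This is built into the definition \eqref{eq:brake-mean-definition} through the limit over all $m$ of $\mu_{\RS}(\Phi^mL_0,L_0)/m$, cited from \cite{LZIteration}. Lemma~\ref{lem:brake-capped-iteration} identifies these indices with $\mu_I(x_i^m)$, and strong nondegeneracy guarantees the hypotheses there. No common-jump argument is needed: the bound is a pure counting estimate.
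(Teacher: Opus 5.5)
Your proposal is correct and follows essentially the same counting argument as the paper. Both sum the Morse inequalities of Proposition~\ref{prop:rfh-morse-inequalities} over a degree window, use $\mu(x_i^m)/m\to\widehat\mu(x_i)$ to bound the number of iterates in each divergent family with degree at most $R$ by $R/|\widehat\mu(x_i)|+o(R)$, and treat the constant chords and the reversed family as bounded; the only cosmetic difference is that the paper raises the lower cutoff to some $J\ge J_0$ beyond $n$ and beyond all degrees of the family tending to $-\infty$, so those terms vanish instead of being absorbed into $O(1)$.
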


\begin{proof}
{By Lemma~\ref{lem:brake-admissible-iterates}, every
nonconstant relatively contractible generalized chord
is uniquely of the form $x_i^m$ or $(x_i^m)^{-1}$,
with $1\le i\le s$ and $m\ge1$.}
For each $i$, consider $x_i^m$ if $\widehat\mu(x_i)>0$ and
$(x_i^m)^{-1}$ if $\widehat\mu(x_i)<0$.
By \eqref{eq:reversal}, their degrees are
\[
 q_i(m):=
 \begin{cases}
  \mu(x_i^m),&\widehat\mu(x_i)>0,\\
  n-\mu(x_i^m),&\widehat\mu(x_i)<0.
 \end{cases}
\]
By \eqref{eq:brake-mean-definition} and
\eqref{eq:degree},
\[
 \frac{q_i(m)}m\longrightarrow|\widehat\mu(x_i)|>0.
\]
Their reversals have degrees $n-q_i(m)$, which tend to
$-\infty$ in each family.
Since there are finitely many families, choose an integer
$J\ge J_0$ greater than $n$ and all the degrees $n-q_i(m)$.

The family of constant chords contributes only in degrees
$0,\ldots,n$.
Thus, in degrees at least $J$, only the chosen chords
contribute to the Morse-type numbers.
Fix $0<\varepsilon<1$.
{Since $q_i(m)/m\to|\widehat\mu(x_i)|$, for each $i$ we have}
\[
 q_i(m)\ge(1-\varepsilon)|\widehat\mu(x_i)|m
\]
for all sufficiently large $m$.
Hence
\[
 \#\{m\ge1:J\le q_i(m)\le R\}
 \le\frac{R}{(1-\varepsilon)|\widehat\mu(x_i)|}+O(1).
\]
Proposition~\ref{prop:rfh-morse-inequalities} and
\eqref{eq:rfh-morse-counts} give
\[
 \begin{aligned}
 \eta R+O(1)
 &=\sum_{j=J}^{R}\dim_\F\RFH_j(\Lambda;\F)\\
 &\le\sum_{i=1}^{s}\#\{m\ge1:J\le q_i(m)\le R\}\\
 &{\le\frac{R}{1-\varepsilon}
       \sum_{i=1}^{s}\frac1{|\widehat\mu(x_i)|}+O(1).}
 \end{aligned}
\]
{Dividing by $R$ and letting $R\to\infty$, then
$\varepsilon\to0$, proves \eqref{eq:rfh-mean-density}.}
\end{proof}

\begin{corollary}\label{lem:brake-mean-density}
In the setting of Conjecture~\ref{conj:real-prequantization-bound},
suppose that $L$ is connected, positively monotone, and wide
over $\F$, with finite minimal Maslov number $N_L\ge2$.
Assume that $f\circ\delta=f$ and, if $N_L=2$, that
condition~\eqref{ass:fillings} holds.
Suppose that there are finitely many geometrically distinct
$\rho$-brake Reeb orbits and that every prime $\rho$-brake Reeb
orbit carrying a positive relatively contractible chord of
$\Lambda$ is strongly nondegenerate.
On the quotient $\bar Y=Y/\langle\delta\rangle$, let $x_1,\ldots,x_s$ be the
shortest positive relatively contractible chords of
$\bar\Lambda$, one for each initial point admitting such a chord.
If $\widehat\mu(x_i)\ne0$ for $1\le i\le s$, then
\begin{equation}\label{eq:brake-mean-density}
 \frac{b(L)}{N_L}\le\sum_{i=1}^s\frac1{|\widehat\mu(x_i)|}.
\end{equation}
Moreover, the Reeb flow of $\alpha$ has at least $s$
geometrically distinct $\rho$-brake Reeb orbits.
\end{corollary}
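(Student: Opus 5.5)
We pass to the quotient $\bar Y=Y/\langle\delta\rangle$ of Lemma~\ref{lem:real-half-period-quotient}. There $\bar Y$ is a prequantization bundle over $(B,2\omega)$, $\bar\Lambda$ projects diffeomorphically onto $L$, and $L$ stays positively monotone and wide with the same $N_L$. Relative cappings lift and project, so by Lemma~\ref{lem:real-two-boundary-indices} the Maslov indices, Floer degrees and mean indices of chords agree upstairs and downstairs. The paragraph following Lemma~\ref{lem:real-half-period-quotient} shows that the strong nondegeneracy hypothesis descends. Since there are finitely many geometrically distinct $\rho$-brake orbits in $Y$, only finitely many prime $\bar\rho$-brake orbits in $\bar Y$ carry a positive relatively contractible chord of $\bar\Lambda$. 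Hence Proposition~\ref{prop:rfh-morse-inequalities} and Lemma~\ref{lem:rfh-mean-density} apply to $\bar\Lambda$ with the shortest chords $x_1,\dots,x_s$.

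To prove \eqref{eq:brake-mean-density}, we identify $\RFH_*(\bar\Lambda;\F)$ with $\QH_*(L;\F)$ by \eqref{eq:rfhqh} when $N_L\ge3$. When $N_L=2$ we use Lemma~\ref{lem:relative-filling-comparison}, with $(W,C)$ from condition~\eqref{ass:fillings}; the injectivity hypothesis and $\mu_C=0$ there are exactly what that lemma needs. Wideness and \eqref{eq:wide-grading} show that every block of $N_L$ consecutive degrees has total dimension $b(L)$. Therefore, for any fixed $J_0$,
\[
 \sum_{j=J_0}^{R}\dim_\F\RFH_j=\frac{b(L)}{N_L}R+O(1).
\]
Lemma~\ref{lem:rfh-mean-density} with $\eta=b(L)/N_L$ then gives \eqref{eq:brake-mean-density}.

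For the count of orbits, group the chords $x_i$ by the prime orbit $\bar\gamma$ in $\bar Y$ that carries them. By Lemma~\ref{lem:brake-admissible-iterates}, the initial points of shortest chords on $\bar\gamma$ are exactly the points of $\bar\gamma\cap\bar\Lambda$ admitting a relatively contractible chord. There are at most $s_{\bar\gamma}$ of them, so
\[
 s\le\sum_{\bar\gamma}s_{\bar\gamma}.
\]
By Lemma~\ref{lem:real-half-period-quotient}, $q^{-1}(\bar\gamma)$ consists of exactly $s_{\bar\gamma}$ geometrically distinct $\rho$-brake orbits. Distinct $\bar\gamma$ have disjoint preimages, so $Y$ has at least $s$ geometrically distinct $\rho$-brake orbits.

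The main point to check is the bookkeeping in this count. Each initial point contributes exactly one shortest chord $x_i$. Chords on the same $\bar\gamma$ with different initial points correspond to different lifted orbits, because each lifted prime orbit meets $\Lambda$ in exactly two points, exchanged along the orbit as in \eqref{eq:two-brake-markings}. We also need that the nondegeneracy and finiteness hypotheses in $Y$ transfer to the hypotheses of Proposition~\ref{prop:rfh-morse-inequalities} in $\bar Y$, including that the $\bar\Lambda$ used there is a union of components of $\Fix(\bar\rho)$. This holds by Lemma~\ref{lem:real-half-period-quotient}.
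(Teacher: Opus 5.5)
Your proposal is correct and follows the paper's own proof essentially step for step: pass to $\bar Y$, identify $\RFH_*(\bar\Lambda;\F)$ with $\QH_*(L;\F)$ (via Lemma~\ref{lem:relative-filling-comparison} under condition~\eqref{ass:fillings} when $N_L=2$), use wideness to get growth rate $b(L)/N_L$, apply Lemma~\ref{lem:rfh-mean-density}, and count orbits with Lemma~\ref{lem:real-half-period-quotient}. One minor caution: your side remark that different initial points on $\bar\gamma$ correspond to different lifted orbits is inaccurate when $s_{\bar\gamma}=2$, since each of the two lifts then maps injectively onto $\bar\gamma$ and passes over both points of $\bar\gamma\cap\bar\Lambda$; your count $s\le\sum_{\bar\gamma}s_{\bar\gamma}$ does not use this remark, so nothing breaks.
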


\begin{proof}
\leavevmode By Section~\ref{subsec:brake-quotient}, only finitely many prime
Reeb orbits on $\bar Y$ carry a positive relatively contractible
chord of $\bar\Lambda$, and all such orbits are strongly
nondegenerate.
Since $\bar\Lambda$ is a union of connected components of
$\Fix(\bar\rho)$ and $\widehat\mu(x_i)\ne0$,
Proposition~\ref{prop:rfh-morse-inequalities} applies on the quotient.
Lemma~\ref{lem:real-half-period-quotient} gives at least $s$
geometrically distinct $\rho$-brake Reeb orbits in $Y$.

By \eqref{eq:rfhqh}, established by
Lemma~\ref{lem:relative-filling-comparison} when $N_L=2$,
and \eqref{eq:wide-grading},
\[
 \sum_{j=J}^{R}\dim_{\F}\RFH_j(\bar\Lambda;\F)
 =\frac{b(L)}{N_L}R+O(1)
\]
for every fixed integer $J$ as $R\to\infty$.
Applying Lemma~\ref{lem:rfh-mean-density} to $\bar\Lambda$
with $\eta=b(L)/N_L$ proves \eqref{eq:brake-mean-density}.
\end{proof}

\begin{lemma}\label{lem:brake-single-iterate-window}
In the setting of Lemma~\ref{lem:brake-capped-iteration},
let $x_1,\ldots,x_s$ be positive relatively contractible
chords.
Suppose that, for each $i$, every iterate of the
associated $\rho$-brake Reeb orbit $x_i^2$ is nondegenerate.
Assume that, for each $i$,
\begin{equation}\label{eq:brake-window-index-assumption}
 \mu_I(x_i),\mu_{-I}(x_i)\ge\frac n2
 \quad\text{or}\quad
 \mu_I(x_i),\mu_{-I}(x_i)\le-\frac n2.
\end{equation}
Then the following hold.
\begin{enumerate}
\item[(i)]
We have $\widehat\mu(x_i)\ne0$ for every $i$.
Set
\begin{equation}\label{eq:brake-signed-degrees}
 \varepsilon_i=\operatorname{sign}\widehat\mu(x_i),\qquad
 q_i(\ell)=\varepsilon_i\mu_I(x_i^\ell)+\frac n2.
\end{equation}
Each sequence $q_i(\ell)$ is nondecreasing and satisfies
$q_i(\ell)\ge n$ for all $\ell\ge1$.

\item[(ii)]
For every integer $D\ge1$, there are arbitrarily large
$K\in D\mathbb N$ and positive integers $m_1,\ldots,m_s$
such that, for each $i$ and every $\ell\ge1$,
\[
 q_i(\ell)\in[K+1,K+n-1]
 \quad\Longrightarrow\quad
 \ell=2m_i.
\]
For each $i$ satisfying
\begin{equation}\label{eq:brake-window-strong-index-assumption}
 \varepsilon_i\mu_I(x_i),\varepsilon_i\mu_{-I}(x_i)
 \ge\frac n2+1,
\end{equation}
the same implication holds with $[K,K+n]$ in place of
$[K+1,K+n-1]$.
\end{enumerate}
\end{lemma}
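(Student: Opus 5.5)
The plan is to reduce to the positive-mean case by conjugation, then read off (i) from the iteration growth inequality and (ii) from the common index jump theorem. Fix $i$ and let $\Phi_i$ be the path of Lemma~\ref{lem:brake-capped-iteration} for $x_i$, so that $\mu_I(x_i^\ell)=\mu_{\RS}(\Phi_i^\ell L_0,L_0)$ and $\mu_{-I}(x_i^\ell)=\mu_{\RS}(\Phi_i^\ell L_1,L_1)$. Let $\sigma_i\in\{\pm1\}$ be the sign occurring in \eqref{eq:brake-window-index-assumption}, and replace $\Phi_i$ by $I\Phi_iI$ when $\sigma_i=-1$. By \eqref{eq:brake-conjugated-indices} this multiplies every $L_0$- and $L_1$-index of every iterate by $\sigma_i$, and nondegeneracy is preserved. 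We may therefore assume $\mu_I(x_i),\mu_{-I}(x_i)\ge n/2$.

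\textbf{Part (i).} Apply \eqref{eq:brake-signed-index-growth} with $\varepsilon=1$. This gives $\mu_I(x_i^{\ell+1})-\mu_I(x_i^\ell)\ge\mu_I(x_i)-n/2\ge0$, so the sequence is nondecreasing and $\mu_I(x_i^\ell)\ge\mu_I(x_i)\ge n/2$. Nondecreasing alone does not give a positive mean index, so I would use the doubled orbit. By \eqref{eq:real-double-index}, $\mu_{\mathrm{CZ}}(x_i^2)=\mu_I(x_i)+\mu_{-I}(x_i)\ge n$. Combined with \eqref{eq:mean-cz-estimate}, this gives $\widehat\mu_{\mathrm{CZ}}(\Phi_i^2)>\mu_{\mathrm{CZ}}-n\ge0$, hence $\widehat\mu(x_i)=\tfrac12\widehat\mu_{\mathrm{CZ}}>0$ after conjugation. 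It follows that $\varepsilon_i=\sigma_i$ and $q_i(\ell)=\sigma_i\mu_I(x_i^\ell)+n/2$ is nondecreasing with $q_i(\ell)\ge q_i(1)\ge n$.

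\textbf{Part (ii).} Apply Theorem~\ref{thm:brake-common-jump-standard} to $\Phi_1,\dots,\Phi_s$, which is allowed since the mean indices are nonzero by (i). It yields $K\in D\mathbb N$ and integers $m_i$ with
\[
q_i(2m_i-1)=K-\varepsilon_i\mu_{-I}(x_i)+\tfrac n2\le K,
\qquad
q_i(2m_i+1)=K+\varepsilon_i\mu_I(x_i)+\tfrac n2\ge K+n,
\]
using $\varepsilon_i\mu_{\pm I}(x_i)\ge n/2$. By monotonicity, $q_i(\ell)\le K$ for all $\ell\le 2m_i-1$ and $q_i(\ell)\ge K+n$ for all $\ell\ge 2m_i+1$. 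So the only iterate that can have degree in $[K+1,K+n-1]$ is $\ell=2m_i$. Under \eqref{eq:brake-window-strong-index-assumption} the same estimates sharpen to $q_i(2m_i-1)\le K-1$ and $q_i(2m_i+1)\ge K+n+1$, which gives the statement with $[K,K+n]$.

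The step I expect to be the main obstacle is the strict positivity $\widehat\mu(x_i)>0$ in (i). If $\mu_I(x_i)=\mu_{-I}(x_i)=n/2$, the bound only reads $\mu_{\mathrm{CZ}}=n$, and strictness relies on the strict inequality in \eqref{eq:mean-cz-estimate} for the nondegenerate iterates of $x_i^2$. That is exactly where the strong nondegeneracy hypothesis enters. A secondary check is that conjugation by $I$ commutes with the reflection iteration, which the paper already asserts before \eqref{eq:brake-conjugated-indices}.
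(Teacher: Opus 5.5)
Your proposal is correct and matches the paper's proof step for step. The double-index identity \eqref{eq:real-double-index} combined with the strict estimate \eqref{eq:mean-cz-estimate} gives the sign of $\widehat\mu(x_i)$. The growth inequality gives monotonicity and $q_i(\ell)\ge n$, and Theorem~\ref{thm:brake-common-jump-standard} combined with monotonicity gives (ii).

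The only difference is cosmetic. You first conjugate by $I$, whereas the paper treats the negative alternative directly: there $\mu_{\mathrm{CZ}}(x_i^2)\le -n$, and the paper uses the already-signed inequality \eqref{eq:brake-signed-index-growth}.
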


\begin{proof}
By \eqref{eq:real-double-index} and
\eqref{eq:mean-cz-estimate},
\[
 \left|2\widehat\mu(x_i)-\mu_I(x_i)-\mu_{-I}(x_i)\right|<n.
\]
The two alternatives in
\eqref{eq:brake-window-index-assumption} therefore imply
$\widehat\mu(x_i)>0$ and $\widehat\mu(x_i)<0$, respectively.

For each $i$, choose a relative capping of $x_i$ and an
adapted symplectic trivialization as in
Lemma~\ref{lem:brake-capped-iteration}.
Let $\Phi_i$ be the symplectic path representing the
linearized Reeb flow along $x_i$ in this trivialization.
By \eqref{eq:brake-signed-index-growth},
\[
 q_i(\ell+1)-q_i(\ell)
 \ge\varepsilon_i\mu_I(x_i)-\frac n2\ge0.
\]
Since $q_i(1)\ge n$, this proves part~(i).

Fix an integer $D\ge1$.
Theorem~\ref{thm:brake-common-jump-standard}, applied to
$\Phi_1,\ldots,\Phi_s$, gives arbitrarily large
$K\in D\mathbb N$ and positive integers $m_1,\ldots,m_s$.
Combining \eqref{eq:brake-common-jump-standard} with
\eqref{eq:brake-capped-iteration} and
\eqref{eq:brake-signed-degrees}, we obtain
\begin{equation}\label{eq:brake-window-endpoints}
 \begin{aligned}
  q_i(2m_i-1)
   &=K-\varepsilon_i\mu_{-I}(x_i)+\frac n2\le K,\\
  q_i(2m_i+1)
   &=K+\varepsilon_i\mu_I(x_i)+\frac n2\ge K+n.
 \end{aligned}
\end{equation}
Since $q_i(\ell)$ is nondecreasing, these inequalities imply
that $q_i(\ell)\in[K+1,K+n-1]$ is possible only when
$\ell=2m_i$.

For each $i$ satisfying
\eqref{eq:brake-window-strong-index-assumption},
\eqref{eq:brake-window-endpoints} gives
\[
 q_i(2m_i-1)\le K-1,\qquad
 q_i(2m_i+1)\ge K+n+1.
\]
Again by monotonicity, $q_i(\ell)\in[K,K+n]$ is possible
only when $\ell=2m_i$.
This proves part~(ii).
\end{proof}

By \eqref{eq:degree} and \eqref{eq:reversal}, the number
$q_i(\ell)$ is the Floer degree of $x_i^\ell$ when
$\varepsilon_i=1$, and of $(x_i^\ell)^{-1}$ when
$\varepsilon_i=-1$.
In either case, the reversal has Floer degree
$n-q_i(\ell)\le0$.

\section{Proofs of the main results}
\label{sec:brake-total-proof}

\subsection{Proofs of Theorems~\ref*{thm:main-brake-total-betti} and~\ref*{thm:main-brake-critical}}
\label{subsec:brake-total-proof}

Under the hypotheses of either
Theorem~\ref{thm:main-brake-total-betti} or
Theorem~\ref{thm:main-brake-critical}, we have $N_L\ge n\ge2$,
and condition~\eqref{ass:fillings} is assumed when $N_L=2$.
We may assume that there are only finitely many geometrically
distinct $\rho$-brake Reeb orbits.

We pass to the quotient $\bar Y=Y/\langle\delta\rangle$,
equipped with the contact form $\bar\alpha$ and the Legendrian
$\bar\Lambda$ from Lemma~\ref{lem:real-half-period-quotient}.
By Section~\ref{subsec:brake-quotient}, \eqref{eq:rfhqh}
holds for $\bar\Lambda$.
When $N_L=2$, this follows from
Lemma~\ref{lem:relative-filling-comparison} under
\eqref{ass:fillings}.
By Lemma~\ref{lem:real-two-boundary-indices}
and \eqref{eq:degree}, the covering map preserves the
Maslov indices and the Floer grading.
Moreover, for every positive relatively contractible
chord $x$ of $\bar\Lambda$, the associated $\bar\rho$-brake
Reeb orbit $x^2$ and all its iterates are nondegenerate.

Let $M_j$ denote the Morse-type numbers defined by
\eqref{eq:rfh-morse-numbers} for the contact form $\bar\alpha$
and the Legendrian $\bar\Lambda$.
Let $\Gamma$ be the set of prime Reeb orbits in $\bar Y$ that
{carry a positive relatively contractible chord of
$\bar\Lambda$}. Each orbit in $\Gamma$ is covered by a prime
$\rho$-brake Reeb orbit in $Y$. The finiteness assumption
therefore implies that $\Gamma$ is finite.

For each $\bar\gamma\in\Gamma$, let
$s_{\bar\gamma}=\#(\bar\gamma\cap\bar\Lambda)$.
Then $s_{\bar\gamma}\in\{1,2\}$.
For each $p\in\bar\gamma\cap\bar\Lambda$, let $x_p$ be the
shortest positive relatively contractible chord of
$\bar\Lambda$ starting at $p$.
By Lemma~\ref{lem:brake-admissible-iterates}, every positive
relatively contractible chord of $\bar\Lambda$ is an
iterate of one of the chords $x_p$.
Each $x_p$ lifts to a positive relatively contractible
chord of $\Lambda$.
By \eqref{eq:main-brake-total-index} or
\eqref{eq:main-brake-critical-index}, every $x_p$ therefore
satisfies \eqref{eq:brake-window-index-assumption}.

Put $s=\sum_{\bar\gamma\in\Gamma}s_{\bar\gamma}$.
By Lemma~\ref{lem:real-half-period-quotient}, exactly $s$
geometrically distinct $\rho$-brake Reeb orbits in $Y$ project
onto the orbits in $\Gamma$.
If $s=0$, then $M_j=b_j(L)$ for every $j$.
Applying Proposition~\ref{prop:rfh-morse-inequalities}
to $(\bar Y,\bar\alpha,\bar\Lambda)$ would then give
$\dim_\F\RFH_j(\bar\Lambda;\F)=0$ for every $j>n$.
This contradicts \eqref{eq:rfhqh} and \eqref{eq:wide-grading},
which give nonzero RFH in arbitrarily large degrees.
Thus $s\ge1$.

By Lemma~\ref{lem:brake-single-iterate-window}, each $x_p$ has
nonzero mean index. Put
\[
 \varepsilon_p=\operatorname{sign}\widehat\mu(x_p),\qquad
 q_p(m)=\varepsilon_p\mu_I(x_p^m)+\frac n2
 \qquad(m\ge1).
\]
Each sequence $q_p(m)$ is nondecreasing and satisfies
$q_p(m)\ge n$ for all $m\ge1$.
For each $p$ and $m\ge1$, select $x_p^m$ if $\varepsilon_p=1$
and $(x_p^m)^{-1}$ if $\varepsilon_p=-1$.
By \eqref{eq:reversal}, the selected chord has degree $q_p(m)$,
whereas its reversal has degree $n-q_p(m)\le0$.

Proposition~\ref{prop:rfh-morse-inequalities} applies on the
quotient.
Only the family of constant chords contributes to the
Morse-type numbers in degrees $1,\ldots,n-1$;
in degrees greater than $n$,
only the selected chords contribute.

\begin{proof}[Proof of Theorem~\ref{thm:main-brake-total-betti}]
\leavevmode By \eqref{eq:main-brake-total-index} and
Lemma~\ref{lem:brake-single-iterate-window}, choose
$K\in N_L\mathbb N$, $K>n$, so that, for each $p$, at most
one integer $m\ge1$ satisfies $q_p(m)\in[K,K+n]$.
Since $N_L>n$ and $N_L\mid K$, \eqref{eq:rfhqh},
Proposition~\ref{prop:rfh-morse-inequalities}, and
\eqref{eq:wide-grading} give
\begin{equation}\label{eq:brake-window-betti-bound}
 b(L)=\sum_{j=0}^{n}\dim_{\F}\QH_{K+j}(L;\F)
 \le\sum_{j=0}^{n}M_{K+j}
 \le s.
\end{equation}
\end{proof}

\begin{proof}[Proof of Theorem~\ref{thm:main-brake-critical}]
\leavevmode Here $N_L=n$.
When $b(L)=2$, the first conclusion follows from $s\ge1$.
To prove this conclusion, we may therefore assume that
$b(L)>2$.

Apply Lemma~\ref{lem:brake-single-iterate-window} with
$D=2N_L$ to obtain $K\in2N_L\mathbb N$, $K>n$, and
positive integers $m_p$ such that, for each $p$,
$q_p(m)\in[K+1,K+n-1]$ is possible only when $m=2m_p$.
Put $J=K+n-1$.
Since $N_L=n$ and $N_L\mid K$, \eqref{eq:wide-grading} gives
\begin{equation}\label{eq:brake-interior-quantum-ranks}
 \dim_{\F}\QH_{K+j}(L;\F)=b_j(L)
 \qquad(1\le j\le n-1).
\end{equation}
Since $b_0(L)=b_n(L)=1$, \eqref{eq:rfhqh}
and Proposition~\ref{prop:rfh-morse-inequalities} yield
\begin{equation}\label{eq:brake-window-upper-bound}
 b(L)-2
 =\sum_{j=1}^{n-1}b_j(L)
 \le\sum_{j=K+1}^{J}M_j
 \le s.
\end{equation}

Suppose, for a contradiction, that $s=b(L)-2$.
Both inequalities in \eqref{eq:brake-window-upper-bound}
are then equalities.
Thus $q_p(2m_p)\in[K+1,J]$ for every $p$, and
\begin{equation}\label{eq:brake-interior-equality}
 M_j=\dim_{\F}\QH_j(L;\F)
 \qquad(K+1\le j\le J).
\end{equation}

We compare the sums of the dimensions of the chain groups
and the homology groups in degrees $n,\ldots,J$ modulo two.
Choose the finite complex $\widetilde C_*$ constructed at
the end of Section~\ref{subsec:morse-window-v2} for the
degree interval $[0,J+1]$.
By \eqref{eq:window-full-identification-v2} and
\eqref{eq:rfhqh}, we have
\[
 H_j(\widetilde C_*)\cong\QH_j(L;\F),\qquad
 \dim_\F\widetilde C_j=M_j
 \qquad(0\le j\le J+1).
\]
Put
$r_j=\rank(\partial:\widetilde C_j\to\widetilde C_{j-1})$.
The rank--nullity theorem gives
\begin{equation}\label{eq:morse-rank-v2}
 \dim_\F\widetilde C_j
 =\dim_\F\QH_j(L;\F)+r_j+r_{j+1}
 \qquad(0\le j\le J+1).
\end{equation}
{Since $q_p(m)\ge n$, $n-q_p(m)\le0$, and $N_L=n$, we have}
\[
 \dim_\F\widetilde C_j=b_j(L)=\dim_\F\QH_j(L;\F)
 \qquad(1\le j\le n-1).
\]
Applying \eqref{eq:morse-rank-v2} in degree $n-1$ gives
$r_n=0$.
Applying it in degree $J$ and using
\eqref{eq:brake-interior-equality} gives $r_{J+1}=0$.

{Since $q_p(2m_p)\le J$,
Lemma~\ref{lem:brake-single-iterate-window}(i) and
\eqref{eq:brake-window-endpoints} show that, for each $p$,
the selected chords of degree in $[n,J]$ are precisely
those with $1\le m\le2m_p$.
Since $b_n(L)=1$, exactly one generator arising from the
constant chords has degree in $[n,J]$.}
Thus
\begin{equation}\label{eq:brake-prefix-generator-count}
 \sum_{j=n}^{J}\dim_\F\widetilde C_j
 =1+2\sum_p m_p.
\end{equation}
Since $J=K+n-1$ and $N_L\mid K$,
\eqref{eq:wide-grading} gives
\begin{equation}\label{eq:brake-prefix-quantum-count}
 \sum_{j=n}^{J}\dim_\F\QH_j(L;\F)
 =\frac{K}{N_L}b(L).
\end{equation}
Summing \eqref{eq:morse-rank-v2} over $j=n,\ldots,J$
and using $r_n=r_{J+1}=0$ gives
\begin{equation}\label{eq:brake-prefix-parity}
 1+2\sum_p m_p
 =\frac{K}{N_L}b(L)+2\sum_{j=n+1}^{J}r_j.
\end{equation}
The left side is odd, whereas the right side is even
because $K\in2N_L\mathbb N$.
Thus $s\ge b(L)-1$, which proves the first conclusion.

For the additional conclusion, assume
\eqref{eq:main-brake-critical-mean}.
Since the covering preserves the Maslov indices of all
iterates, it also preserves the mean index.
Hence
\[
 |\widehat\mu(x_p)|>\frac{n(b(L)-1)}{b(L)}.
\]
 By \eqref{eq:brake-mean-density},
\[
 \frac{b(L)}n
 \le\sum_p\frac1{|\widehat\mu(x_p)|}
 <\frac{s\,b(L)}{n(b(L)-1)}.
\]
Since $b(L)\ge2$, this gives $s>b(L)-1$, and hence
$s\ge b(L)$.
\end{proof}

\subsection{Contact geometry of Finsler spheres}
\label{subsec:sphere-contact-model}

Let $F$ be a Finsler metric on $S^{n+1}$, $n\ge1$, and let
$\sigma:S^{n+1}\to S^{n+1}$ be the restriction of a nontrivial
orthogonal linear involution of $\R^{n+2}$.
Assume that $\Fix(\sigma)\ne\varnothing$ and that $F$ is
real with respect to $\sigma$.
The fixed point set of $\sigma$ is then a great sphere,
which we denote by $S^d$, with $0\le d\le n$.
Set $a=\min\{d,n-d\}$, as in
Section~\ref{subsec:conjectures}.

The Legendre transform
$\mathcal L_F(q,v)=(q,d_v(F^2/2))$
identifies the unit tangent bundle
$S_FS^{n+1}$
with the unit cosphere bundle $Y_F$.
It intertwines the involution
$(q,v)\mapsto(\sigma(q),-(D\sigma)_q(v))$ with $\rho$.
By \eqref{eq:sphere-finsler-contact}, the cotangent bundle
projection maps a Reeb trajectory of $\alpha_F$ on $[0,T]$
to an $F$-geodesic segment of length $2\pi T$.

Consider the unit cosphere bundle $Y_0=S_{g_0}^*S^{n+1}$
with contact form $\alpha_0=\theta|_{Y_0}/(2\pi)$.
The radial map
\[
 r_F:Y_0\longrightarrow Y_F,\qquad
 r_F(q,p)=\left(q,\frac{p}{F^*(q,p)}\right)
\]
intertwines the involutions and satisfies
\[
 r_F^*\alpha_F=f\alpha_0,\qquad
 f=(F^*|_{Y_0})^{-1}>0,\qquad
 f\circ\rho=f.
\]
Using $g_0$, identify each unit covector $p$ at $q$
with the unit tangent vector $v$ determined by
$p=g_0(v,\cdot)$.
The Reeb flow of $\alpha_0$ rotates the orthonormal pair
$(q,v)$ with angular speed $2\pi$.
The map $(q,v)\mapsto[v+iq]$ identifies the
orbit space of this flow with the complex quadric
\[
 Q^n=\left\{[z_0:\cdots:z_{n+1}]\in\mathbb{CP}^{n+1}:
              \sum_{j=0}^{n+1}z_j^2=0\right\}.
\]
The time-$1/2$ map $\delta=\phi_{\alpha_0}^{1/2}$ is given by
$\delta(q,v)=(-q,-v)$.

For a submanifold $N$ of a Finsler manifold $(M,F)$,
denote its \emph{conormal and unit conormal bundles} by
\[
 N^*N=\{(q,p)\in T^*M:q\in N,\ p|_{T_qN}=0\},
 \qquad S^*_{F,N}M=N^*N\cap S_F^*M.
\]
The radial map $r_F^{-1}:Y_F\to Y_0$ identifies the fixed
Legendrian $\Lambda=S^*_{F,S^d}S^{n+1}$ with
$S^*_{g_0,S^d}S^{n+1}\subset Y_0$.
At each $q\in S^d$, the unit normal vectors to $S^d$
with respect to $g_0$ form the unit sphere in the
$-1$ eigenspace of the underlying linear involution.
The fiber of $Y_0\to Q^n$ through such a point $(q,v)$
meets the unit conormal bundle precisely at $(q,v)$
and $(-q,-v)$.
The prequantization projection $Y_0\to Q^n$
maps this unit conormal bundle onto $L$, and
\begin{equation}\label{eq:sphere-pinching-real-locus}
 \Lambda=\Fix(\rho)\cong S^d\times S^{n-d},\qquad
 L\cong(S^d\times S^{n-d})/\bigl((x,y)\sim(-x,-y)\bigr)
 \subset Q^n.
\end{equation}
For $n\ge2$, this description of the real locus also appears
in \cite{IMMO}.

For $a\ge1$, the cohomology calculation in
\cite[Theorem~2.1]{DavisPPS} gives
\[
 H^j(L;\F)\cong
 H^j(\mathbb{RP}^a;\F)\oplus
 H^{j-(n-a)}(\mathbb{RP}^a;\F)
\]
as $\F$-vector spaces for every $j\in\Z$.
Taking dimensions and summing over $j$, we obtain
\[
 b(L)=2\sum_{j=0}^{a}\dim_\F H^j(\mathbb{RP}^a;\F)
     =2(a+1).
\]
When $a=0$, we have $L\cong S^n$, so the same formula holds.

Thus Conjecture~\ref{conj:real-prequantization-bound}
implies Conjecture~\ref{conj:sphere-brake-bound}.

For the remainder of this subsection, assume $n\ge2$.
The Lagrangian $L\subset Q^n$ is positively monotone and wide.
Its minimal Maslov number and total Betti number are
\begin{equation}\label{eq:sphere-pinching-maslov}
 N_L=
 \begin{cases}
  2n,&a=0,\\
  n,&a\ge1,
 \end{cases}
 \qquad b(L)=2a+2.
\end{equation}
Positive monotonicity and the formula for $N_L$ follow from
\cite[Proposition~2.1 and Remark~1]{IMMO}.

{For $N_L\ge3$, wideness follows from
\cite[Theorem~B]{OhRealForms}.}
In the remaining case $n=2$, $a=1$, we identify $L\subset Q^2$ with
the product of equators
$\mathbb{RP}^1\times\mathbb{RP}^1\subset
\mathbb{CP}^1\times\mathbb{CP}^1$.
For the product complex structure, the four holomorphic disks of
Maslov index two through a fixed boundary point are the hemispheres
in each factor, with the other factor constant.
Their boundary classes cancel in $H_1(L;\F)$, so the differential of
the minimal pearl complex vanishes on $H_1(L;\F)$ by
\cite[formula~(22)]{BC}.
Since $H_*(L;\F)$ is generated by $H_1(L;\F)$ and the fundamental
class under the intersection product, the argument in
\cite[proof of Lemma~5.1.1]{BC} gives the vanishing of the differential.
Thus $L$ is wide.

Suppose first that $a\ge1$, and assume
\eqref{eq:sphere-antipodal-symmetry}.
Then the cotangent lift of the antipodal map,
$\delta(q,p)=(-q,-p)$, preserves $Y_F$ and satisfies
$\delta^*\alpha_F=\alpha_F$.
The radial map $r_F$ commutes with $\delta$, and
$f\circ\delta=f$.
The metric $F$ descends to $\mathbb{RP}^{n+1}$, and we use
the same notation for the descended metric.
The quotient by $\delta$ is then
\begin{equation}\label{eq:sphere-pinching-conormal-pair}
 (\bar Y,\bar\Lambda)
 =\bigl(S_F^*\mathbb{RP}^{n+1},
         S^*_{F,\mathbb{RP}^d}\mathbb{RP}^{n+1}\bigr).
\end{equation}
Equation~\eqref{eq:sphere-pinching-real-locus}
shows that $\Lambda$ is connected.
Since $Y_F$ is diffeomorphic to the simply connected manifold
$S_{g_0}^*S^{n+1}$, every chord of $\Lambda$ is relatively
contractible.
A path in $\Lambda$ from a point $p$ to $\delta(p)$
projects to a loop in $\bar\Lambda$ representing the nontrivial
element of $\pi_1(\bar Y)\cong\mathbb Z_2$.
Hence the inclusion-induced map
$\pi_1(\bar\Lambda)\to\pi_1(\bar Y)$ is surjective,
and every chord of $\bar\Lambda$ is relatively
contractible as well.

For $n\ge3$, \eqref{eq:rfhqh} holds for $\bar\Lambda$.
If $n=2$, then $a=d=1$, so $L\cong T^2$ and $N_L=2$.
Let $\varpi:S^3\to Q:=\mathbb{RP}^3$ be the antipodal quotient
map, and set $N:=\varpi(\operatorname{Fix}(\sigma))\subset Q$,
so that $N\cong\mathbb{RP}^1$.
Define
\begin{equation}\label{eq:sphere-three-filling}
 \begin{aligned}
 D_F^*Q&=\{(q,p)\in T^*Q:F^*(q,p)\le1\},\\
 (W_F,C_F)&=\bigl(D_F^*Q,\,N^*N\cap D_F^*Q\bigr).
 \end{aligned}
\end{equation}

\begin{lemma}\label{lem:sphere-three-filling}
The manifold $W_F$ is a Liouville filling of $\bar Y$,
and $C_F$ is an exact Lagrangian filling of $\bar\Lambda$.
The pair $(W_F,C_F)$ satisfies condition~\eqref{ass:fillings}.
\end{lemma}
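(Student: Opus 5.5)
We check the three requirements in turn. The filling and exactness parts are standard cotangent-bundle facts; condition~\eqref{ass:fillings} reduces to computations with $\pi_1$ and with the vertical Lagrangian distribution.

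\emph{Filling and exactness.} $F^*$ is a fiberwise strictly convex norm, so $D_F^*Q$ is a fiberwise star-shaped domain in $T^*Q$. The Liouville vector field $p\,\partial_p$ of the canonical form $\theta$ is transverse to its boundary $S_F^*Q$, and $\theta$ restricts there to $2\pi\bar\alpha_F$. After rescaling the Liouville form by a constant, $W_F$ is therefore a Liouville filling of $\bar Y$ as identified in \eqref{eq:sphere-pinching-conormal-pair}. The conormal bundle $N^*N$ satisfies $\theta|_{N^*N}=0$, so it is an exact Lagrangian. It is conic, hence tangent to $p\,\partial_p$. Thus it meets $S_F^*Q$ transversely in the unit conormal bundle $\bar\Lambda$, and $C_F$ is an exact Lagrangian filling of $\bar\Lambda$.

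\emph{Vanishing of the Maslov class.} The pair $(W_F,C_F)$ deformation retracts onto the zero section: radial scaling $p\mapsto tp$ preserves both $W_F$ and $C_F$. Hence $(W_F,C_F)\simeq(Q,N)$, with $N\cong S^1$ the image of a great circle. In particular $\pi_1(W_F,C_F)\cong\pi_1(Q,N)$ and $\pi_2(W_F,C_F)\cong\pi_2(Q,N)$. For $\mu_{C_F}$, use the vertical distribution $V$, which is Lagrangian and globally defined over $T^*Q$. The Maslov class of a conormal bundle relative to $V$ is computed along $N$ via a trivialization of $TQ\otimes\mathbb C$ coming from $TQ$. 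Since $Q=\mathbb{RP}^3$ is orientable and $TQ$ is trivial, $T(T^*Q)\cong TQ\otimes\mathbb C$ is trivialized so that $V$ and the zero-section distribution are constant real subspaces. Along $C_F$, which is homotopic to $N$ in the zero section, $TC_F$ is homotopic through Lagrangians to $TN\oplus N^*_qN$, a real subbundle. Homotoping any relative disk into the zero section then gives Maslov index zero, so $\mu_{C_F}|_{\pi_2(W_F,C_F)}=0$.

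\emph{Injectivity.} This is the main point. We need $\pi_1(\bar Y,\bar\Lambda)\to\pi_1(W_F,C_F)\cong\pi_1(Q,N)$ to be injective as a map of pointed sets. Since $\pi_1(N)\to\pi_1(Q)\cong\Z_2$ is surjective (a great circle lifts to a path from $x$ to $-x$), the target $\pi_1(Q,N)$ is a single point. So injectivity amounts to showing that $\pi_1(\bar Y,\bar\Lambda)$ is trivial, i.e.\ that every path in $\bar Y$ with endpoints on $\bar\Lambda$ is relatively contractible. This was already shown just before the lemma: $\pi_1(\bar\Lambda)\to\pi_1(\bar Y)$ is surjective, and $\bar\Lambda$ is connected because $\Lambda$ is connected. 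The exact sequence of pointed sets
\[
\pi_1(\bar\Lambda)\to\pi_1(\bar Y)\to\pi_1(\bar Y,\bar\Lambda)\to\pi_0(\bar\Lambda)
\]
then gives $\pi_1(\bar Y,\bar\Lambda)=\{\ast\}$, and the map is trivially injective.

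The step requiring the most care is the Maslov class vanishing. Everything else is formal, but there we must check that the trivialization convention matches the $\mu_C$ used in Lemma~\ref{lem:relative-filling-comparison}. An alternative route is to note that $\pi_2(Q,N)$ maps to $\pi_1(N)\cong\Z$ with kernel $\pi_2(Q)=0$. Its image consists of the loops in $N$ that are trivial in $\pi_1(Q)\cong\Z_2$, i.e.\ even multiples of $[N]$. Hence $\pi_2(Q,N)\cong2\Z$, generated by a disk bounded by the doubled great circle in $\mathbb{RP}^3$. It then suffices to compute the Maslov index of this one disk for the conormal boundary condition, and a direct computation in the trivialization of $TQ$ gives zero.
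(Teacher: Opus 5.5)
Your proposal is correct and follows essentially the paper's route. Exactness comes from $\theta|_{N^*N}=0$. The radial retraction of $(W_F,C_F)$ onto $(Q,N)$, together with surjectivity of $\pi_1(N)\to\pi_1(Q)$ and of $\pi_1(\bar\Lambda)\to\pi_1(\bar Y)$, makes both relative $\pi_1$'s trivial. The Maslov class is then computed after pushing disks into the zero section.

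The one step you leave implicit is why the boundary loop $V(t)\oplus iV(t)^\perp$ has Maslov index zero; calling it ``a real subbundle'' is not by itself a reason. The paper supplies the reason: complex conjugation fixes each such Lagrangian but reverses the sign of the Maslov index. Equivalently, the unitary frame $(v_1,iv_2,iv_3)$ has real determinant, so $\det^2\equiv1$ along the loop.
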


\begin{proof}
The Liouville form $\theta/(2\pi)$ vanishes on $N^*N$.
Thus $C_F$ is an exact Lagrangian filling of $\bar\Lambda$,
and its cylindrical completion is $N^*N$.

The projection $\bar Y=S_F^*Q\to Q$ has simply connected
fiber $S^2$, and $W_F$ deformation retracts onto $Q$.
The projection $\bar\Lambda\to N$ has connected fiber $S^1$,
and $C_F$ deformation retracts onto $N$.
Since $\pi_1(N)\to\pi_1(Q)$ is surjective, the homotopy
exact sequences of the two pairs give
\[
 \pi_1(\bar Y,\bar\Lambda)=\pi_1(W_F,C_F)=0.
\]

The Maslov class of $C_F$ also vanishes on $\pi_2(W_F,C_F)$.
To see this, take a disk in $W_F$ with boundary on $C_F$.
Scaling the cotangent fibers deforms this disk into the zero
section while keeping its boundary on $C_F$.
Choose a metric splitting and an orthonormal trivialization
of $TQ$ over the projected disk.
In the induced trivialization, the tangent spaces to $N^*N$
along the boundary of the deformed disk have the form
$V(t)\oplus iV(t)^\perp$.
Here $q(t)$ is the projected boundary loop, and $V(t)$
is the image of $T_{q(t)}N$ under the chosen trivialization.
Complex conjugation preserves each of these Lagrangian
subspaces and changes the sign of the Maslov index of the loop.
This index is therefore zero; see also
\cite[Proposition~1.2]{APS}.
\end{proof}

Applying Lemma~\ref{lem:relative-filling-comparison} to
$(W_F,C_F)$ gives \eqref{eq:rfhqh} also when $n=2$ and $a=1$.

When $a=0$, we have $d=0$ or $d=n$, and the fixed
Legendrian has two connected components:
\[
 \Lambda=\Lambda_+\sqcup\Lambda_-,\qquad
 \Lambda_+\cong\Lambda_-\cong S^n.
\]
Since $S_{g_0}^*S^{n+1}$ is simply connected, every chord
of either component is relatively contractible.
Each of $r_F^{-1}(\Lambda_+)$ and $r_F^{-1}(\Lambda_-)$ in
$Y_0$ is a Legendrian lift that projects diffeomorphically
onto $L\cong S^n$.
Applying \cite[Theorem~1.1]{BKK} to each component and using
\eqref{eq:wide-grading} with $N_L=2n$, we obtain
\begin{equation}\label{eq:sphere-component-rfh}
 \RFH_j(\Lambda_\pm;\F)
 \cong\QH_j(L;\F)
 \cong\bigoplus_{k\in\Z}H_{j-2nk}(S^n;\F).
\end{equation}

\subsection{Proof of Theorem~\ref*{cor:sphere-pinching}}
\label{subsec:sphere-index-estimates}

\leavevmode We assume the hypotheses of Theorem~\ref{cor:sphere-pinching},
including \eqref{eq:sphere-standard-pinching}, and use the notation
of Section~\ref{subsec:sphere-contact-model}.

\begin{lemma}\label{lem:sphere-pinching-index-estimates}
Every chord $x:[0,T]\to Y_F$ in $\cC^0_+(\alpha_F,\Lambda)$ satisfies
\begin{equation}\label{eq:sphere-boundary-index-bound}
 \mu_I(x),\ \mu_{-I}(x)\ge\frac n2.
\end{equation}
If $x(0)=x(T)$, then
\begin{equation}\label{eq:sphere-pinching-chord-mean}
 \widehat\mu(x)>n.
\end{equation}
If \eqref{eq:sphere-pinching} also holds, then
\begin{equation}\label{eq:sphere-pinching-mean-threshold}
 \widehat\mu(x)>\frac{n(2a+1)}{2a+2}.
\end{equation}
\end{lemma}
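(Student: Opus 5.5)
I would translate the Reeb-chord indices into Morse-index data for $F$-geodesics, then feed in Rademacher's length estimate and Rauch-type comparison for flag curvature.

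Under the Legendre transform, a chord $x:[0,T]\to Y_F$ of $\Lambda$ is a geodesic $c$ of length $\ell=2\pi T$ leaving $S^d$ orthogonally, in the Finsler sense of the conormal condition, and arriving at $S^d$ orthogonally. The $+1$ and $-1$ eigenspaces of $D\rho$ correspond to the two boundary conditions in a variational problem.
\begin{itemize}
\item $\mu_I$ corresponds to variations tangent to $\Fix(\sigma)$ at both ends, that is, the energy functional on paths with endpoints in $S^d$.
\item $\mu_{-I}$ corresponds to variations normal to $\Fix(\sigma)$ at both ends, that is, the functional for the complementary fixed sphere of $-\sigma$, or equivalently the $\sigma$-odd part of the brake problem.
\end{itemize}
In each case the Morse index theorem for Finsler geodesics with these boundary conditions (as in \cite[Section~4]{KKK}) gives $\mu_{\pm I}(x)=\operatorname{ind}_\pm(c)+\tfrac n2$ up to a sign convention. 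I would verify the constant $\tfrac n2$ against the round-metric model, using \eqref{eq:mbdegree} with $N_L$ from \eqref{eq:sphere-pinching-maslov}. It then suffices to show $\operatorname{ind}_\pm(c)\ge0$, which is automatic, or more precisely to get the Robbin--Salamon count, including its half-integer endpoint contributions, to be at least $n/2$.

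For a sharper bound I would use $K_F\le1$. By Rauch/Morse--Schoenberg comparison, any geodesic of length $>\pi$ has conjugate points. Rademacher's estimate \cite{Rademacher} states that under $(\lambda/(\lambda+1))^2<K_F\le1$, every closed geodesic, in particular every brake orbit $x^2$, has length $>2\pi$; a chord of $\Lambda$ therefore has length $\ell>\pi$.

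For the first claim, \eqref{eq:sphere-boundary-index-bound}, I would proceed as follows. By the focal comparison for submanifolds that are totally geodesic for the reversed-symmetric structure, each normal Jacobi field along $c$ has a focal point of $S^d$ before length $\pi/2$. This yields $n$ crossings in the interior. The endpoint contributions then give $\mu_{\pm I}\ge n/2$ after the $-n/2$ normalisation.

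For the mean index, I would use the closed-orbit estimate. If $x(0)=x(T)$, then $x^2$ is a closed geodesic traversing $c$ twice. By \cite{Rademacher}, under the upper curvature bound $K_F\le1$ the mean index of the closed geodesic $c$ satisfies $\widehat\mu_{\mathrm{CZ}}\ge n\,\ell/\pi$. Hence $\widehat\mu(x)=\tfrac12\widehat\mu_{\mathrm{CZ}}(x^2)\ge n\ell/\pi$. The lower curvature bound gives $\ell>\pi$, and therefore $\widehat\mu(x)>n$.

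For the third claim, \eqref{eq:sphere-pinching-mean-threshold}, I would redo the length estimate. Write $K_F>\delta_0$ with $\delta_0=\frac{(2a+1)^2}{(a+1)^2}\bigl(\frac{\lambda}{\lambda+1}\bigr)^2$. Rademacher's bound for the length of a closed geodesic becomes $\ge\pi(1+1/\lambda)/\sqrt{\delta_0}$, which is $>\pi\,\frac{2(a+1)}{2a+1}$, so $\ell>\pi(a+1)/(2a+1)$. Using the mean-index–length inequality $\widehat\mu(x)\ge n\ell/\pi\cdot(\text{const})$ from the first part then yields $\widehat\mu(x)>n(2a+1)/(2a+2)$; I would fix the normalising constant by checking that equality is approached by the Katok–Ziller family. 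The one extra step is for general chords, not just $x(0)=x(T)$: since $\widehat\mu$ is defined through $x^2$, the same closed-geodesic bound applies, with length $2\ell$ for the orbit $x^2$.

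I expect the main obstacle to be the boundary-index step. The crux is matching the Robbin--Salamon convention with the Finsler Morse index for conormal boundary conditions, including the degenerate endpoint crossings, so that the bound comes out exactly as $n/2$ for both $L_0$ and $L_1$. I would anchor the constant by the round metric, where everything is explicit from \eqref{eq:mbdegree}, and argue by homotopy through metrics in the pinching class.
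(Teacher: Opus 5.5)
Your overall plan (translate to Morse indices of the projected geodesic, use Rademacher's length bound, then comparison test fields) matches the paper's route. However, the execution has genuine gaps, beginning with the index bookkeeping you flagged.

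The relation $\mu_{\pm I}(x)=\ind_\pm(c)+\frac n2$ is wrong, and so is the resulting claim that the first inequality is ``automatic''. The correct shifts are as follows.
The conormal Morse index theorem of \cite{APS} gives $i_+(x)=\mu_I(x)+d-\frac n2$. Here the radial/Reeb plane contributes $\frac12$, and $\dim(S^d\times S^d)=2d$.
Splitting the Morse index of the reflected closed geodesic $x^2$ into $\sigma$-even and $\sigma$-odd normal variations gives $i_+(x)+i_-(x)=\mu_{\mathrm{CZ}}(x^2)=\mu_I(x)+\mu_{-I}(x)$. Hence $i_-(x)=\mu_{-I}(x)+(n-d)-\frac n2$.
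The round-model check you proposed confirms these shifts. The shortest chord (length $\pi$) has $i_+=d$ and $i_-=n-d$, each with nullity $n$. By \eqref{eq:mbdegree} with $N_L=n$, its nondegenerate perturbations have Floer degrees in $[n,2n]$, i.e.\ $\mu_I\in[\frac n2,\frac{3n}2]$. Your formula would instead place them in $[d+\frac n2,d+\frac{3n}2]$.
So the first claim is equivalent to $i_+\ge d$ and $i_-\ge n-d$, which requires curvature input. Also, $\mu_{-I}$ is not the index of an endpoint problem on $\Fix(-\sigma)$, because the endpoints of $c$ lie on $S^d$. Only your ``$\sigma$-odd part of the brake problem'' description is correct.

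Second, you use the curvature bounds in the wrong direction. An upper bound $K_F\le1$ delays conjugate and focal points and bounds indices from \emph{above}. It cannot give $\widehat\mu_{\mathrm{CZ}}\ge n\ell/\pi$; a round sphere of curvature $\kappa\in(\frac14,1)$ violates that inequality. Lower index bounds come from $K_F\ge\kappa$ with $(\lambda/(\lambda+1))^2<\kappa$.

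The bound $K_F\le1$ enters only through \cite[Theorem~1]{Rademacher}, which gives closed geodesics length at least $\pi(1+\lambda^{-1})$, not more than $2\pi$. The Katok examples have brake geodesics of length $2\pi/(1+j\varepsilon)<2\pi$. Thus a chord has $\ell\ge\frac\pi2(1+\lambda^{-1})$, which may be less than $\pi$, and the relevant fact is $\ell\sqrt\kappa>\frac\pi2$. The fields $\cos\bigl(\frac{\pi t}{2\ell}\bigr)E(t)$, with $E$ parallel and $E(0)\in U_0$ (resp.\ $U_0^\perp$), then give exactly the $d$ (resp.\ $n-d$) negative directions needed.

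Your claim that all $n$ normal directions acquire focal points before length $\pi/2$ fails in two ways. First, ``before $\pi/2$'' would require $K_F\ge1$. Second, in the $\mu_I$ problem the $n-d$ directions normal to $S^d$ carry Dirichlet conditions, so comparison yields negative directions from them only once $\ell\sqrt\kappa>\pi$; the situation for $\mu_{-I}$ is symmetric.

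For the mean index, Dirichlet sine fields along the projection of $x^m$ give $\widehat\mu(x)\ge n\ell\sqrt\kappa/\pi$. When $c_x$ is closed, $\ell\ge\pi(1+\lambda^{-1})$, and this yields $\widehat\mu(x)>n$. For an arbitrary chord, choose $\kappa>\frac{(2a+1)^2}{(a+1)^2}\bigl(\frac{\lambda}{\lambda+1}\bigr)^2$ and use $\ell\ge\frac\pi2(1+\lambda^{-1})$; this gives $\widehat\mu(x)>\frac{n(2a+1)}{2a+2}$ directly.

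Your modified length bound $\pi(1+\lambda^{-1})/\sqrt{\delta_0}$ does not follow from Rademacher's theorem. Combined with your inequality, it would give only $n(a+1)/(2a+1)$, which is less than $n(2a+1)/(2a+2)$. Calibrating a normalising constant against the Katok--Ziller family is not a proof.
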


\begin{proof}
Let $x:[0,T]\to Y_F$ be a chord in
$\cC^0_+(\alpha_F,\Lambda)$, and let $c_x$ be its projection
to $S^{n+1}$.
Write $\ell(x)$ for the Finsler length of $c_x$.
We denote by $i_d(x)$ the Morse index of $c_x$ for the
energy functional, with both endpoints free to move on $S^d$.

After a constant rescaling of time, $x$ is a trajectory
of the energy Hamiltonian $H=(F^*)^2/2$.
Since $H$ is smooth near $x$ and its fiberwise Hessian
is positive definite along $x$, the Morse index theorem
for conormal boundary conditions applies; see
\cite[Theorem~4.1 and Remark~4.3]{APS}.

Along $x$, the tangent bundle $T(T^*S^{n+1})$ splits as
the symplectic direct sum of the contact distribution
and the plane spanned by the radial and Reeb directions.
The linearized Hamiltonian flow preserves both summands.
At each endpoint, the tangent space of the conormal
bundle $N^*S^d$ is the direct sum of the tangent space
of $\Lambda$ and the radial line.
On the contact distribution, the linearized Hamiltonian
flow agrees with the linearized Reeb flow after the
time rescaling above.
With Lagrangian boundary conditions
$T_{x(0)}\Lambda$ and $T_{x(T)}\Lambda$, this restriction
has Robbin--Salamon index $\mu_I(x)$ by
\eqref{eq:real-boundary-indices}.

To compute the contribution from the radial and Reeb directions,
let $Z$ be the fiberwise radial vector field and let $\phi_H^t$
denote the Hamiltonian flow of $H$ with respect to $d\theta$.
Since $H$ is homogeneous of degree two in the fibers,
\[
 (D\phi_H^t)_{x(0)}\bigl(Z_{x(0)}\bigr)
 =Z_{x(t)}+tX_H(x(t)).
\]
The conormal boundary conditions in this plane are given by
the radial direction.
{The contribution of this plane is the Robbin--Salamon
index of the pair
\[
 t\longmapsto
 \left(
 (D\phi_H^t)_{x(0)}\bigl(\mathbb R Z_{x(0)}\bigr),
 \mathbb R Z_{x(t)}
 \right).
\]}
The two lines coincide at $t=0$ and are transverse for
every $t>0$.
In the frame $(Z,X_H)$, with the convention
$\iota_{X_H}d\theta=-dH$, the crossing form at $t=0$ is
\[
 \Gamma_0(sZ_{x(0)})
 =s^2\,d\theta\bigl(Z_{x(0)},X_H(x(0))\bigr)
 =2H(x(0))s^2
 =s^2,
\]
where the last equality follows from $F^*(x(0))=1$.
Its signature is one, so this plane contributes $1/2$
to the Robbin--Salamon index; see \cite[(2)]{APS}.
{The adapted trivialization of the contact distribution and
the frame $(Z,X_H)$ together give a symplectic trivialization
over the relative capping.
Along the capping arc, the conormal boundary condition is
constant in this trivialization.
By additivity under symplectic direct sums, the
Robbin--Salamon index in this trivialization is
$\mu_I(x)+1/2$.}

{To apply the Morse index formula in
\cite[Theorem~4.1]{APS}, we show that its Robbin--Salamon
term equals $\mu_I(x)+1/2$.}
Choose a frame of the tangent bundle of $S^{n+1}$ over
the projected relative capping.
{Together with the horizontal splitting defined by $g_0$,
this frame gives a second symplectic trivialization over
the capping.
In this trivialization, the tangent spaces of the cotangent
fibers are identified with $\{0\}\times(\R^{n+1})^*$; see
\cite[Section~3]{APS}.}
Parametrize the capping arc from $x(0)$ to $x(T)$, and let
$\mathcal L(s)$, $0\le s\le1$, denote the images of the
tangent spaces of $N^*S^d$ along this arc in that trivialization.
Since $S^d$ is totally geodesic for $g_0$, each
$\mathcal L(s)$ is a conormal subspace of $T^*\R^{n+1}$.

{Since both trivializations extend over the capping,
the restrictions of their transition map to the chord
and to the capping arc are homotopic with fixed endpoints.}
By homotopy invariance, naturality, and concatenation,
{the Robbin--Salamon index in the second trivialization is
\[
 \mu_I(x)+\frac12
 +\mu_{\RS}\bigl(\mathcal L,\mathcal L(1)\bigr).
\]
The last term vanishes by \cite[Proposition~1.2]{APS}.}

By Lemma~\ref{lem:brake-capped-iteration}, the Reeb chord
$x$ is nondegenerate.
{The linearized Hamiltonian flow also sends the initial
radial line to a line transverse to the final radial line.}
The full linearized Hamiltonian equation therefore has
no nonzero solution satisfying the conormal boundary
conditions, so the nullity term in the Morse index
formula vanishes.
Applying \cite[Theorem~4.1]{APS} with
$\dim(S^d\times S^d)=2d$ and using
$\mu(x)=\mu_I(x)+n/2$, we obtain
\begin{equation}\label{eq:sphere-pinching-morse-grading}
 \begin{aligned}
 i_d(x)
 &=\left(\mu_I(x)+\frac12\right)
   +\frac{2d-(n+1)}2
 =\mu_I(x)+d-\frac n2,\\
 \mu(x)&=i_d(x)+n-d.
 \end{aligned}
\end{equation}

{To derive an analogue of
\eqref{eq:sphere-pinching-morse-grading} for $\mu_{-I}(x)$,
we consider normal variations of $c_x$.}
Let $\ell=\ell(x)$, and let $c:[0,\ell]\to S^{n+1}$
be the unit-speed parametrization of $c_x$.
At either endpoint, the velocity $v=\dot c$ satisfies
$(D\sigma)(v)=-v$.
By \eqref{eq:sphere-conjecture-symmetry}, the map
$\sigma:(S^{n+1},F)\to(S^{n+1},F^{\mathrm{rev}})$
is an isometry, where $F^{\mathrm{rev}}(q,w)=F(q,-w)$.
Since $g^{F^{\mathrm{rev}}}_{-v}=g^F_v$, the map
$D\sigma$ is orthogonal with respect to $g^F_v$ at
each endpoint.
Its $+1$ eigenspace is the tangent space of $S^d$,
whereas $v$ belongs to its $-1$ eigenspace.
In particular, $c$ is $g_{\dot c}$-orthogonal to $S^d$
at both endpoints.

The defining properties of the Chern connection also give
\[
 \nabla^{F^{\mathrm{rev}},-v}=\nabla^{F,v};
\]
see \cite[Theorem~2.4.1]{BCS} and
\cite[Section~2.3]{JavaloyesSoares} for these properties.
For vector fields $X,Y$ tangent to $S^d$, naturality under
the isometry $\sigma$, together with $(D\sigma)(v)=-v$,
therefore gives
\[
 (D\sigma)(\nabla^v_XY)=\nabla^v_XY,
 \qquad \nabla^v=\nabla^{F,v}.
\]
Thus $\nabla^v_XY$ is tangent to $S^d$, and the second
fundamental form terms in the index form vanish.

Let $H_t$ be the orthogonal complement of $\dot c(t)$
with respect to $g_{\dot c(t)}$, and set
\[
 U_0=T_{c(0)}S^d\subset H_0,\qquad
 U_\ell=T_{c(\ell)}S^d\subset H_\ell.
\]
For $t=0,\ell$, the $+1$ and $-1$ eigenspaces of
$(D\sigma)_{c(t)}$ in $H_t$ are $U_t$ and $U_t^\perp$,
respectively.
Here the orthogonal complements are taken in $H_t$.

By the second variation formula with endpoints on
submanifolds
\cite[Corollary~3.8 and Section~3.3]{JavaloyesSoares}
and the vanishing of the boundary terms, the index form
on normal variations is given by
\[
 Q_x(V,V)=\int_0^\ell
 \bigl(|D_tV|^2-\langle R_tV,V\rangle\bigr)\,dt.
\]
Here $D_t$ denotes Chern covariant differentiation along
$c$ with reference vector $\dot c(t)$, and $R_t$ is the
curvature endomorphism in the Jacobi equation
$D_t^2V+R_tV=0$.
The norm and inner product are taken with respect to
$g_{\dot c(t)}$.
We use the continuous extension of this quadratic form
to $H^1$ normal vector fields.

Let $i_+(x)$ be the Morse index of $Q_x$ on $H^1$ normal
vector fields satisfying
\begin{equation}\label{eq:sphere-plus-boundary}
 V(0)\in U_0,\qquad V(\ell)\in U_\ell,
\end{equation}
and let $i_-(x)$ be its Morse index on $H^1$ normal
vector fields satisfying
\begin{equation}\label{eq:sphere-minus-boundary}
 V(0)\in U_0^\perp,\qquad V(\ell)\in U_\ell^\perp.
\end{equation}
Every admissible vector field for the endpoint problem on
$S^d$ splits into a normal field satisfying
\eqref{eq:sphere-plus-boundary} and a tangential field
$f\dot c$ with $f(0)=f(\ell)=0$.
These two summands are orthogonal with respect to the
full index form, whose value on $f\dot c$ is
\[
 \int_0^\ell |f'(t)|^2\,dt\ge0.
\]
Hence $i_+(x)=i_d(x)$.

The energy functional is invariant under the loop involution
$\eta(t)\mapsto\sigma(\eta(2\ell-t))$.
At the closed geodesic obtained by reflection, the induced
involution on $H^1$ normal variations has $+1$ and $-1$
eigenspaces that are orthogonal with respect to the index form.
Restriction to $[0,\ell]$ identifies these eigenspaces with
the spaces satisfying \eqref{eq:sphere-plus-boundary} and
\eqref{eq:sphere-minus-boundary}, respectively.
Indeed, the inverse maps are given by the extensions
\[
 V(2\ell-t)=\pm(D\sigma)_{c(t)}V(t),
 \qquad 0\le t\le\ell,
\]
and the corresponding endpoint conditions ensure that
these extensions are continuous and periodic.
On each eigenspace, the closed-geodesic index form is
twice $Q_x$.
Tangential periodic variations have nonnegative index form.
The Morse index of the reflected closed geodesic on the
free loop space is therefore $i_+(x)+i_-(x)$.

We apply \cite[Remark~3.4 and Theorem~4.1]{APS} to the
contractible closed Reeb orbit $x^2$, with the capping
provided by Lemma~\ref{lem:brake-capped-iteration}.
The trivialization of the contact distribution, together
with the frame $(Z,X_H)$, extends over this capping.
A frame over the projected disk gives a trivialization
over the capping that preserves the vertical subbundle.
Their transition loop is therefore contractible, so the
two trivializations give the same Robbin--Salamon index
for the graph of the linearized flow.

In the symplectic frame $(Z,X_H)$, the linearized
Hamiltonian flow on the plane spanned by these vectors is
represented by the matrix
\[
 S(t)=\begin{pmatrix}1&0\\ t&1\end{pmatrix}.
\]
Using coordinates $(q,p)$ in this frame, its graph relative
to the diagonal $\Delta$, with the anti-diagonal as a
complementary Lagrangian, is represented by the quadratic
form $(q,p)\mapsto tq^2$.
The localization property in \cite[Theorem~2.3]{RS}
therefore gives
\[
 \mu_{\RS}(\operatorname{Gr}S,\Delta)=\frac12.
\]
The linearized Poincar\'e return map of $x^2$ has
no eigenvalue $1$.
On the plane spanned by $Z$ and $X_H$, the fixed subspace of
the linearized Hamiltonian return map is $\mathbb R X_H$.
Thus the periodic solutions of the full linearized
Hamiltonian equation are the constant multiples of $X_H$
along $x^2$, and the nullity is one.
The Morse index formula now gives
\[
 i_+(x)+i_-(x)
 =\left(\mu_{\mathrm{CZ}}(x^2)+\frac12\right)-\frac12
 =\mu_{\mathrm{CZ}}(x^2).
\]
Combining this identity with \eqref{eq:real-double-index}
and \eqref{eq:sphere-pinching-morse-grading}, we obtain
\begin{equation}\label{eq:sphere-two-morse-indices}
 i_+(x)=\mu_I(x)+d-\frac n2,
 \qquad
 i_-(x)=\mu_{-I}(x)+(n-d)-\frac n2.
\end{equation}

{To prove \eqref{eq:sphere-boundary-index-bound},
choose $\kappa>0$ such that}
\begin{equation}\label{eq:sphere-pinching-kappa}
 \left(\frac{\lambda}{\lambda+1}\right)^2
 <\kappa\le K_F\le1.
\end{equation}
Since $S^{n+1}$ is compact and simply connected, this
curvature bound allows us to apply
\cite[Theorem~1]{Rademacher}.
Every closed geodesic therefore has Finsler length at
least $\pi(1+\lambda^{-1})$.
Reflecting $c_x$ using $\sigma$ gives a closed geodesic
of length $2\ell(x)$, so
\begin{equation}\label{eq:sphere-chord-length}
 \ell(x)\sqrt\kappa
 \ge\frac\pi2(1+\lambda^{-1})\sqrt\kappa>\frac\pi2.
\end{equation}

To estimate $i_+(x)$ and $i_-(x)$, choose parallel normal
fields $E$ with initial values in $U_0$ and $U_0^\perp$,
respectively, and set
\[
 V(t)=\cos\!\left(\frac{\pi t}{2\ell}\right)E(t).
\]
Since $V(0)=E(0)$ and $V(\ell)=0$, these fields satisfy
\eqref{eq:sphere-plus-boundary} and
\eqref{eq:sphere-minus-boundary}, respectively.
The curvature bound $R_t\ge\kappa\Id$ on $H_t$ and
\eqref{eq:sphere-chord-length} give
\[
 Q_x(V,V)\le
 \left(\frac{\pi^2}{4\ell^2}-\kappa\right)
 \int_0^\ell|V|^2\,dt<0
\]
for every nonzero field $V$ of this form.
The fields constructed from initial values in $U_0$
and $U_0^\perp$ form vector spaces of dimensions $d$
and $n-d$, respectively.
Hence $i_+(x)\ge d$ and $i_-(x)\ge n-d$.
Together with \eqref{eq:sphere-two-morse-indices}, these
inequalities prove \eqref{eq:sphere-boundary-index-bound}.

For each integer $m\ge1$, consider the geodesic obtained
by projecting $x^m$ to $S^{n+1}$.
Parametrize it by arc length on $[0,m\ell(x)]$ and choose
a parallel orthonormal frame of its normal bundle.
Multiply each vector field in this frame by
\[
 \sin\!\left(\frac{j\pi t}{m\ell(x)}\right),
 \qquad j\in\Z_{>0},\qquad
 \frac{j\pi}{m\ell(x)}<\sqrt\kappa.
\]
These fields vanish at both endpoints, so they satisfy
the boundary conditions used to define $i_d(x^m)$.
Using $R_t\ge\kappa\Id$ and the orthogonality of the
sine functions, we see that the index form is negative
definite on their span.
Consequently,
\begin{equation}\label{eq:sphere-pinching-iterate-index}
 i_d(x^m)\ge
 n\left(\left\lceil\frac{m\ell(x)\sqrt\kappa}{\pi}\right\rceil-1\right)
 \ge\frac{nm\ell(x)\sqrt\kappa}{\pi}-n.
\end{equation}
For the corresponding estimates for closed geodesics,
see \cite[Lemma~1]{Rademacher}.

Dividing \eqref{eq:sphere-pinching-iterate-index} by $m$
and letting $m\to\infty$, we obtain from
\eqref{eq:sphere-pinching-morse-grading},
\eqref{eq:brake-mean-definition}, and
\eqref{eq:sphere-chord-length} that
\begin{equation}\label{eq:sphere-pinching-uniform-mean}
 \widehat\mu(x)
 \ge\frac{n\ell(x)\sqrt\kappa}{\pi}
 \ge\frac n2(1+\lambda^{-1})\sqrt\kappa.
\end{equation}

If $x(0)=x(T)$, then $c_x$ is itself a closed geodesic,
so \cite[Theorem~1]{Rademacher} gives
$\ell(x)\ge\pi(1+\lambda^{-1})$.
The first inequality in
\eqref{eq:sphere-pinching-uniform-mean} therefore yields
\[
 \widehat\mu(x)\ge n(1+\lambda^{-1})\sqrt\kappa>n,
\]
which proves \eqref{eq:sphere-pinching-chord-mean}.

If \eqref{eq:sphere-pinching} also holds, we may choose
$\kappa$ in \eqref{eq:sphere-pinching-kappa} so that
\[
 \frac{(2a+1)^2}{(a+1)^2}
 \left(\frac{\lambda}{\lambda+1}\right)^2<\kappa.
\]
The second inequality in
\eqref{eq:sphere-pinching-uniform-mean} then gives
\eqref{eq:sphere-pinching-mean-threshold}.
\end{proof}

\begin{proof}[Proof of Theorem~\ref{cor:sphere-pinching}]
Suppose first that $a\ge1$.
By \eqref{eq:sphere-pinching-maslov},
we have $N_L=n$ and $b(L)=2a+2$.
When $n=2$, the required filling hypothesis follows from
Lemma~\ref{lem:sphere-three-filling}.
Lemma~\ref{lem:sphere-pinching-index-estimates} verifies
the index assumption of Theorem~\ref{thm:main-brake-critical}.
{Theorem~\ref{thm:main-brake-critical} therefore gives at least
$2a+1$ distinct prime $\sigma$-brake geodesics.}

If \eqref{eq:sphere-pinching} also holds, then
\eqref{eq:sphere-pinching-mean-threshold}, together with
$b(L)=2a+2$, gives \eqref{eq:main-brake-critical-mean}.
{Theorem~\ref{thm:main-brake-critical} then gives at least
$2a+2$ distinct prime $\sigma$-brake geodesics.}

It remains to consider $a=0$.
{Since antipodal invariance is not assumed, we use the RFH
groups \eqref{eq:sphere-component-rfh} of $\Lambda_\pm$
directly on $Y_F$.}

{Each component $\Lambda_\pm$ admits a positive chord.
If one did not, reversal would exclude negative chords
as well, so its filtered Floer complexes would have only
generators arising from constant chords, in degrees
$0,\ldots,n$.
This contradicts \eqref{eq:sphere-component-rfh}, which gives
nonzero RFH groups in arbitrarily large degrees.}

Suppose now that there is only one geometrically distinct
$\rho$-brake Reeb orbit, and let $\gamma$ be its prime
representative.
Since each component admits a positive chord,
$\gamma$ must meet both $\Lambda_+$ and $\Lambda_-$.
By \eqref{eq:two-brake-markings}, it has exactly two fixed
points of $\rho$, one in each component.
The shortest positive chord $x:[0,T]\to Y_F$ of
$\Lambda_+$ therefore traverses $\gamma$ once.
Every positive chord of $\Lambda_+$ is an iterate
$x^m$, $m\ge1$.

{Since $x(0)=x(T)$,
Lemma~\ref{lem:sphere-pinching-index-estimates} gives
$\widehat\mu(x)>n$.}
{By \eqref{eq:sphere-component-rfh}, for every fixed integer $J$,}
\[
 \sum_{j=J}^R\dim_\F\RFH_j(\Lambda_+;\F)
 =\frac Rn+O(1)
 \qquad(R\in\Z,\ R\to\infty).
\]
The prime orbit $\gamma$ is strongly nondegenerate by
hypothesis, and $\widehat\mu(x)>0$.
Thus Lemma~\ref{lem:rfh-mean-density} applies to
$\Lambda_+$ and its single family of positive chords
$x^m$, $m\ge1$, and gives
\[
 \frac1n\le\frac1{\widehat\mu(x)}.
\]
This contradicts $\widehat\mu(x)>n$.
There are therefore at least two geometrically distinct
$\rho$-brake Reeb orbits, and hence at least two distinct
prime $\sigma$-brake geodesics.
\end{proof}

\begin{example}\label{ex:sphere-katok-sharp}
Let $n\ge2$, and let $\sigma:S^{n+1}\to S^{n+1}$ be the
restriction of a nontrivial orthogonal linear involution of
$\R^{n+2}$, with $\Fix(\sigma)\ne\varnothing$.
Write $\Fix(\sigma)\cong S^d$, where $0\le d\le n$, and set
$a=\min\{d,n-d\}$.
There exist Randers metrics on $S^{n+1}$ satisfying
the hypotheses of Theorem~\ref{cor:sphere-pinching}
for the given involution $\sigma$, as well as the
pinching condition \eqref{eq:sphere-pinching}.
These metrics have exactly $2a+2$ distinct prime
$\sigma$-brake geodesics.
\end{example}

\begin{proof}
We use the Katok--Ziller construction \cite{ZillerKatok}.
Set $r=a+1$ and $k=n-2a$.
We also write $\sigma$ for its orthogonal linear extension
to $\R^{n+2}$.
Let $E_+$ and $E_-$ denote its $+1$ and $-1$ eigenspaces,
respectively.
Their dimensions are $d+1$ and $n-d+1$, respectively.
Choose orthonormal sets $e_1,\ldots,e_r$ in $E_+$ and
$f_1,\ldots,f_r$ in $E_-$, and set
\[
 P_j=\Span\{e_j,f_j\},\qquad
 K=(P_1\oplus\cdots\oplus P_r)^\perp.
\]
Then $\dim K=k$, and $K$ is contained in one of the
eigenspaces of $\sigma$.
Define the skew-symmetric linear map $A_0$ by
\[
 A_0e_j=jf_j,\qquad A_0f_j=-je_j,\qquad A_0|_K=0.
\]
Then $\sigma A_0\sigma=-A_0$.

Choose an irrational number $\varepsilon$ with
$0<\varepsilon<1/((a+1)(2a+1))$.
The vector field $W(q)=\varepsilon A_0q$ is a Killing
field for $g_0$, and
\[
 w:=\max_{q\in S^{n+1}}|W(q)|_{g_0}
   =r\varepsilon<\frac1{2a+1}\le1.
\]
Zermelo navigation from $g_0$ and $W$ therefore
defines a smooth, strongly convex Randers metric $F$;
see \cite[Proposition~1.1]{BRSNavigation}.
For each $q\in S^{n+1}$, the $F$-unit sphere in
$T_qS^{n+1}$ is
\[
 \{W(q)+u:u\in T_qS^{n+1},\ g_0(u,u)=1\}.
\]
Since $\sigma$ and the antipodal map preserve $g_0$,
the identities
$W(\sigma(q))=-(D\sigma)_q\bigl(W(q)\bigr)$ and
$W(-q)=-W(q)$ imply
\[
 F\bigl(\sigma(q),-(D\sigma)_q(v)\bigr)=F(q,v),
 \qquad F(-q,-v)=F(q,v).
\]
The metric $F$ has constant flag curvature one by
\cite[Theorem~3.1]{BRSNavigation}.
Its reversibility is $\lambda=(1+w)/(1-w)$, so
\[
 \frac{(2a+1)^2}{(a+1)^2}
 \left(\frac{\lambda}{\lambda+1}\right)^2
 =\left(\frac{(2a+1)(1+w)}{2(a+1)}\right)^2<1.
\]

To determine the closed geodesics, identify
$S_{g_0}S^{n+1}$ with $S_FS^{n+1}$ by
$(q,u)\mapsto(q,u+W(q))$.
Under this identification, the unit-speed geodesic flow
of $F$ is given by
\begin{equation}\label{eq:katok-geodesic-flow}
 (q(t),u(t))=
 \left(e^{\varepsilon tA_0}(\cos t\,q_0+\sin t\,u_0),
       e^{\varepsilon tA_0}(-\sin t\,q_0+\cos t\,u_0)\right);
\end{equation}
see \cite[Theorem~2]{RoblesNavigation}.
In these coordinates, $\dot q(t)=u(t)+W(q(t))$.
Set $z_0=q_0+iu_0$.
Equation~\eqref{eq:katok-geodesic-flow} gives
\[
 q(t)+iu(t)=e^{-it}e^{\varepsilon tA_0}z_0.
\]
Thus the geodesic is periodic with period $T>0$ if and only if
\[
 e^{\varepsilon TA_0}z_0=e^{iT}z_0.
\]
The eigenvalues of the complexification of $A_0$ are $im$
for $m\in\{\pm1,\ldots,\pm r\}$, with zero also occurring
when $k>0$.
If the component of $z_0$ in the eigenspace corresponding
to $im$ is nonzero, periodicity requires
$(1-m\varepsilon)T\in2\pi\Z$.
Since $\varepsilon$ is irrational, the ratio
$(1-m\varepsilon)/(1-m'\varepsilon)$ is irrational whenever
$m\ne m'$.
Thus, for a closed geodesic, $z_0$ must lie in a single
eigenspace.

If $z_0$ lies in an eigenspace with eigenvalue $ij$ or $-ij$,
then $q_0$ and $u_0$ lie in $P_j$, so the geodesic traces
the great circle $P_j\cap S^{n+1}$.
For each $j=1,\ldots,r$, the initial data
$(q_0,u_0)=(e_j,f_j)$ and $(q_0,u_0)=(e_j,-f_j)$
give prime closed $F$-geodesics that traverse this circle
in opposite directions.
Their respective prime periods and Finsler lengths are
\begin{equation}\label{eq:katok-prime-periods}
 T_j^+=\frac{2\pi}{1+j\varepsilon},
 \qquad
 T_j^-=\frac{2\pi}{1-j\varepsilon}.
\end{equation}

{If $z_0$ lies in the zero eigenspace, then $q_0,u_0\in K$,
and \eqref{eq:katok-geodesic-flow} gives the great circles
contained in $K\cap S^{n+1}$.}
They exist if and only if $k\ge2$, since $q_0$ and $u_0$
must be orthonormal vectors in $K$.

Both orientations of each circle $P_j\cap S^{n+1}$
are $\sigma$-brake geodesics.
With $e_j$ as the initial point, $\sigma$ sends the angular
coordinate $\theta$ to $-\theta$.
No geodesic contained in $K\cap S^{n+1}$ is a
$\sigma$-brake geodesic.
Indeed, if $\sigma|_K=-\Id$, there is no fixed point of
$\sigma$ in $K\cap S^{n+1}$.
If $\sigma|_K=\Id$, the brake condition would force the
initial velocity to vanish.
Thus $F$ has exactly $2r=2a+2$ distinct prime
$\sigma$-brake geodesics.

It remains to verify that these prime $\sigma$-brake
geodesics are strongly nondegenerate.
Consider either orientation of the great circle
$P_i\cap S^{n+1}$, and let $T=T_i^\pm$ be its prime period.
In the coordinates $(q,u)$, the tangent space to
$S_{g_0}S^{n+1}$ at a point of this orbit splits into
the flow direction and $P_i^\perp\oplus P_i^\perp$.
The linearized flow preserves the second summand,
on which we compute the linearized Poincar\'e return map.

By \eqref{eq:katok-geodesic-flow}, this return map is
the composition of the linearized geodesic flow of $g_0$
at time $T$ with $e^{\varepsilon TA_0}$ acting on
both components.
{For each $j\ne i$, the restriction of the return map
to $P_j\oplus P_j$}
has eigenvalues
\[
 e^{\pm i(1+j\varepsilon)T},\qquad
 e^{\pm i(1-j\varepsilon)T}.
\]
If $k>0$, its restriction to $K\oplus K$ has eigenvalues
$e^{iT}$ and $e^{-iT}$, each with multiplicity $k$.
Equation~\eqref{eq:katok-prime-periods} and the
irrationality of $\varepsilon$ imply that
\[
 \frac{(1+j\varepsilon)T}{2\pi},\qquad
 \frac{(1-j\varepsilon)T}{2\pi}
 \qquad(j\ne i)
\]
are irrational, as is $T/(2\pi)$.
{Since no eigenvalue of the return map is a root of unity,
the prime $\sigma$-brake geodesics are strongly nondegenerate.}
\end{proof}

\begin{remark}
The strong nondegeneracy assumption in
Theorem~\ref{cor:sphere-pinching} applies only to the prime
$\sigma$-brake geodesics.
In Example~\ref{ex:sphere-katok-sharp}, when $n-2a\ge3$,
the great circles contained in $K\cap S^{n+1}$ give
infinitely many geometrically distinct prime closed geodesics.
These geodesics vary in smooth families and are therefore
degenerate.
None of them is a $\sigma$-brake geodesic.
\end{remark}

\section*{Acknowledgments}

Peng Liu is supported by the Postdoctoral Fellowship Program of CPSF
under Grant Number GZC20252019.
Dong Xie is partially supported by NSFC
(12631007, 12271268 and 12361141812) and the Natural Science
Foundation of Tianjin (No. 25JCZDJC01030).

ChatGPT (OpenAI) was used to assist with revising the language of the
manuscript, searching the literature and checking references, and
reviewing mathematical arguments.
The authors take full responsibility for all statements, proofs,
and references in this paper.


\begingroup
\small
\begin{thebibliography}{BSWX26}
\bibitem[APS08]{APS}
A.~Abbondandolo, A.~Portaluri, and M.~Schwarz,
\emph{The homology of path spaces and Floer homology with conormal
boundary conditions}, J. Fixed Point Theory Appl. \textbf{4} (2008),
no.~2, 263--293.
\doilink{https://doi.org/10.1007/s11784-008-0097-y}{doi:10.1007/s11784-008-0097-y}.

\bibitem[AbM26a]{AM26a}
M.~Abreu and L.~Macarini,
\emph{Periodic orbits of non-degenerate lacunary contact forms on
prequantization bundles}, Selecta Math. (N.S.) \textbf{32} (2026),
no.~3, Paper No.~59.
\doilink{https://doi.org/10.1007/s00029-026-01169-3}
{doi:10.1007/s00029-026-01169-3}.

\bibitem[AbM26b]{AM26b}
M.~Abreu and L.~Macarini,
\emph{Multiplicity of closed Reeb orbits on contact manifolds with
periodic equivariant symplectic homology}, preprint, 2026,
\href{https://arxiv.org/abs/2607.01219v2}{arXiv:2607.01219v2}.

\bibitem[AlF09]{AFReebChords}
P.~Albers and U.~Frauenfelder,
\emph{Floer homology for negative line bundles and Reeb chords in
prequantization spaces}, J. Mod. Dyn. \textbf{3} (2009),
no.~3, 407--456.
\doilink{https://doi.org/10.3934/jmd.2009.3.407}
{doi:10.3934/jmd.2009.3.407}.

\bibitem[BKK26]{BKK}
H.~Bae, J.~Kang, and S.~Kim,
\emph{Rabinowitz Floer homology for Legendrian submanifolds in
prequantization bundles}, preprint, 2026,
\href{https://arxiv.org/abs/2606.31674v1}{arXiv:2606.31674v1}.

\bibitem[BSWX26]{BSWXAG}
S.~Bai, E.~Shelukhin, Y.~Wang, and G.~Xu,
\emph{A proof of the Arnold--Givental conjecture}, preprint, 2026,
\href{https://arxiv.org/abs/2608.27242v1}{arXiv:2608.27242v1}.

\bibitem[BCS00]{BCS}
D.~Bao, S.-S.~Chern, and Z.~Shen,
\emph{An Introduction to Riemann-Finsler Geometry},
Graduate Texts in Mathematics, vol.~200,
Springer-Verlag, New York, 2000.
\doilink{https://doi.org/10.1007/978-1-4612-1268-3}
{doi:10.1007/978-1-4612-1268-3}.

\bibitem[BRS04]{BRSNavigation}
D.~Bao, C.~Robles, and Z.~Shen,
\emph{Zermelo navigation on Riemannian manifolds},
J. Differential Geom. \textbf{66} (2004), no.~3, 377--435.
\doilink{https://doi.org/10.4310/jdg/1098137838}{doi:10.4310/jdg/1098137838}.

\bibitem[BiC09]{BC}
P.~Biran and O.~Cornea,
\emph{Rigidity and uniruling for Lagrangian submanifolds},
Geom. Topol. \textbf{13} (2009), 2881--2989.
\doilink{https://doi.org/10.2140/gt.2009.13.2881}{doi:10.2140/gt.2009.13.2881}.

\bibitem[BrC26]{BCReebChords}
F.~Bro\'ci\'c and D.~Cant,
\emph{Lengths of Reeb chords and Viterbo restriction},
preprint (2026),
\href{https://arxiv.org/abs/2606.16270}{arXiv:2606.16270}.

\bibitem[BCS25]{BCSConormal}
F.~Bro\'ci\'c, D.~Cant, and E.~Shelukhin,
\emph{The chord conjecture for conormal bundles},
Math. Ann. \textbf{392} (2025), 3959--4021.
\doilink{https://doi.org/10.1007/s00208-025-03171-0}
{doi:10.1007/s00208-025-03171-0}.

\bibitem[CiO18]{CO}
K.~Cieliebak and A.~Oancea,
\emph{Symplectic homology and the Eilenberg--Steenrod axioms},
Algebr. Geom. Topol. \textbf{18} (2018), 1953--2130.
\doilink{https://doi.org/10.2140/agt.2018.18.1953}{doi:10.2140/agt.2018.18.1953}.

\bibitem[Cor21]{CoronaDisk}
D.~Corona,
\emph{A multiplicity result for orthogonal geodesic chords in Finsler disks},
Discrete Contin. Dyn. Syst. \textbf{41} (2021), no.~11, 5329--5357.
\doilink{https://doi.org/10.3934/dcds.2021079}{doi:10.3934/dcds.2021079}.

\bibitem[Dav10]{DavisPPS}
D.~M.~Davis,
\emph{Projective product spaces},
J. Topol. \textbf{3} (2010), no.~2, 265--279.
\doilink{https://doi.org/10.1112/jtopol/jtq006}
{doi:10.1112/jtopol/jtq006}.

\bibitem[GGP20]{GGPSeifert}
R.~Giamb\`o, F.~Giannoni, and P.~Piccione,
\emph{Multiple orthogonal geodesic chords and a proof of Seifert's
conjecture on brake orbits}, preprint, 2020, revised 2022,
\href{https://arxiv.org/abs/2002.09687v3}{arXiv:2002.09687v3}.

\bibitem[GGM18]{GGM}
V.~L.~Ginzburg, B.~Z.~G\"urel, and L.~Macarini,
\emph{Multiplicity of closed Reeb orbits on prequantization bundles},
Israel J. Math. \textbf{228} (2018), 407--453.
\doilink{https://doi.org/10.1007/s11856-018-1769-y}{doi:10.1007/s11856-018-1769-y}.

\bibitem[IMMO16]{IMMO}
H.~Iriyeh, H.~Ma, R.~Miyaoka, and Y.~Ohnita,
\emph{Hamiltonian non-displaceability of Gauss images of isoparametric
hypersurfaces}, Bull. Lond. Math. Soc. \textbf{48} (2016),
no.~5, 802--812.
\doilink{https://doi.org/10.1112/blms/bdw040}{doi:10.1112/blms/bdw040}.

\bibitem[JaS15]{JavaloyesSoares}
M.~A.~Javaloyes and B.~L.~Soares,
\emph{Geodesics and Jacobi fields of pseudo-Finsler manifolds},
Publ. Math. Debrecen \textbf{87} (2015), no.~1--2, 57--78.
\doilink{https://doi.org/10.5486/PMD.2015.7028}
{doi:10.5486/PMD.2015.7028}.

\bibitem[Kan18]{KangReversible}
J.~Kang,
\emph{On reversible maps and symmetric periodic points},
Ergodic Theory Dynam. Systems \textbf{38} (2018), no.~4, 1479--1498.
\doilink{https://doi.org/10.1017/etds.2016.71}{doi:10.1017/etds.2016.71}.

\bibitem[KKK22]{KKK}
J.~Kim, S.~Kim, and M.~Kwon,
\emph{Equivariant wrapped Floer homology and symmetric periodic Reeb
orbits}, Ergodic Theory Dynam. Systems \textbf{42} (2022),
no.~5, 1708--1763.
\doilink{https://doi.org/10.1017/etds.2020.144}{doi:10.1017/etds.2020.144}.

\bibitem[Kim22]{KimSphere}
S.~Kim,
\emph{Symmetric periodic orbits and invariant disk-like global
surfaces of section on the three-sphere},
Trans. Amer. Math. Soc. \textbf{375} (2022), no.~6, 4107--4151.
\doilink{https://doi.org/10.1090/tran/8516}{doi:10.1090/tran/8516}.

\bibitem[LLZ21]{LLZSurvey}
C.~Liu, Y.~Long, and D.~Zhang,
\emph{Index iteration theory for brake orbit type solutions
and applications},
Anal. Theory Appl. \textbf{37} (2021), no.~2, 129--156.
\doilink{https://doi.org/10.4208/ata.2021.pr80.05}
{doi:10.4208/ata.2021.pr80.05}.

\bibitem[LiZ14a]{LZIteration}
C.~Liu and D.~Zhang,
\emph{Iteration theory of $L$-index and multiplicity of brake orbits},
J. Differential Equations \textbf{257} (2014), no.~4, 1194--1245.
\doilink{https://doi.org/10.1016/j.jde.2014.05.006}
{doi:10.1016/j.jde.2014.05.006}.

\bibitem[LiZ14b]{LZSeifert}
C.~Liu and D.~Zhang,
\emph{Seifert conjecture in the even convex case},
Comm. Pure Appl. Math. \textbf{67} (2014), no.~10, 1563--1604.
\doilink{https://doi.org/10.1002/cpa.21525}
{doi:10.1002/cpa.21525}.

\bibitem[LiZ19]{LZDynamic}
Z.~Liu and D.~Zhang,
\emph{Brake orbits on compact symmetric dynamically convex
reversible hypersurfaces on $\R^{2n}$},
Discrete Contin. Dyn. Syst. \textbf{39} (2019), no.~7, 4187--4206.
\doilink{https://doi.org/10.3934/dcds.2019169}
{doi:10.3934/dcds.2019169}.

\bibitem[LZZ06]{LZZBrake}
Y.~Long, D.~Zhang, and C.~Zhu,
\emph{Multiple brake orbits in bounded convex symmetric domains},
Adv. Math. \textbf{203} (2006), no.~2, 568--635.
\doilink{https://doi.org/10.1016/j.aim.2005.05.005}{doi:10.1016/j.aim.2005.05.005}.

\bibitem[MoL26]{MorenoLimoge}
A.~Moreno and A.~Limoge,
\emph{A relative Poincar\'e--Birkhoff theorem},
Bull. Lond. Math. Soc. \textbf{58} (2026), no.~7, e70426.
\doilink{https://doi.org/10.1112/blms.70426}{doi:10.1112/blms.70426}.

\bibitem[Oh95]{OhRealForms}
Y.-G.~Oh,
\emph{Floer cohomology of Lagrangian intersections and
pseudo-holomorphic disks, III: Arnold--Givental conjecture},
in \emph{The Floer Memorial Volume},
H.~Hofer, C.~H.~Taubes, A.~Weinstein, and E.~Zehnder (eds.),
Progress in Mathematics, vol.~133,
Birkh\"auser, Basel, 1995, 555--573.
\doilink{https://doi.org/10.1007/978-3-0348-9217-9_23}
{doi:10.1007/978-3-0348-9217-9\_23}.

\bibitem[Rab87]{RabinowitzBrake}
P.~H.~Rabinowitz,
\emph{On the existence of periodic solutions for a class of symmetric
Hamiltonian systems}, Nonlinear Anal. \textbf{11} (1987),
no.~5, 599--611.
\doilink{https://doi.org/10.1016/0362-546X(87)90075-7}
{doi:10.1016/0362-546X(87)90075-7}.

\bibitem[Rad07]{Rademacher}
H.-B.~Rademacher,
\emph{Existence of closed geodesics on positively curved Finsler manifolds},
Ergodic Theory Dynam. Systems \textbf{27} (2007), no.~3, 957--969.
\doilink{https://doi.org/10.1017/S0143385706001064}
{doi:10.1017/\allowbreak S0143385706001064}.

\bibitem[RoS93]{RS}
J.~Robbin and D.~Salamon,
\emph{The Maslov index for paths}, Topology \textbf{32} (1993),
no.~4, 827--844.
\doilink{https://doi.org/10.1016/0040-9383(93)90052-W}{doi:10.1016/0040-9383(93)90052-W}.

\bibitem[Rob07]{RoblesNavigation}
C.~Robles,
\emph{Geodesics in Randers spaces of constant curvature},
Trans. Amer. Math. Soc. \textbf{359} (2007), no.~4, 1633--1651.
\doilink{https://doi.org/10.1090/S0002-9947-06-04051-7}
{doi:10.1090/\allowbreak S0002-9947-06-04051-7}.

\bibitem[She26]{ShelukhinChord}
E.~Shelukhin,
\emph{On orderability and the chord conjecture}, preprint, 2026,
\href{https://arxiv.org/abs/2608.10493v1}{arXiv:2608.10493v1}.

\bibitem[Szu89]{SzulkinBrake}
A.~Szulkin,
\emph{An index theory and existence of multiple brake orbits for
star-shaped Hamiltonian systems}, Math. Ann. \textbf{283} (1989),
no.~2, 241--255.
\doilink{https://doi.org/10.1007/BF01446433}{doi:10.1007/BF01446433}.

\bibitem[Zil83]{ZillerKatok}
W.~Ziller,
\emph{Geometry of the Katok examples},
Ergodic Theory Dynam. Systems \textbf{3} (1983), no.~1, 135--157.
\doilink{https://doi.org/10.1017/S0143385700001851}
{doi:10.1017/\allowbreak S0143385700001851}.

\end{thebibliography}
\endgroup
\end{document}